\newtheorem{Lemma}{Lemma}[section]
\newtheorem{Theorem}[Lemma]{Theorem}
\newtheorem{Proposition}[Lemma]{Proposition}
\newtheorem{Corollary}[Lemma]{Corollary}
\newtheorem{Definition}[Lemma]{Definition}
\newcommand{\removedEinstein}[1]{}
\newcommand{\extension}[1]{} 
\newcommand{\generalizations}[1]{}
\newcommand{\hiddenfootnote}[1]{}
\def\S{{\mathcal S}}
\newcommand{\R}{{\mathbb R}}
\newcommand{\N}{{\mathbb N}} 
\def\tilde{\widetilde}
\def \bfo {\begin {eqnarray*} }
\def \efo {\end {eqnarray*} }
\def \ba {\begin {eqnarray*} }
\def \ea {\end {eqnarray*} }
\def \beq {\begin {eqnarray}}
\def \eeq {\end {eqnarray}}
\def \dist {\hbox{dist}}
\def \det {\hbox{det}}
\def\bra{\langle}
\def \p {\partial}
\def\cR{{\mathcal R}}
\def\cS{{\mathcal S}}
\def\Met{\mathcal{M}et}
\begin{document}
\title[]{Reconstruction of a compact manifold from the scattering data of 
internal sources}

\author[M. Lassas]{Matti Lassas $^\square$}

\author[T. Saksala]{Teemu Saksala $^{\diamond,\: \ast}$}

\author[H. Zhou]{Hanming Zhou $^\dagger$}

\let\thefootnote\relax\footnote{ $^\square$ Department of Mathematics and Statistics, University of Helsinki, Finland, 
\\
$^\diamond$  Department of Mathematics and Statistics, University of Helsinki, Finland; Department of computational and applied mathematics, Rice University, USA
\\
$^\dagger$ Department of Pure Mathematics and Mathematical Statistics, University of Cambridge, UK; Department of Mathematics, University of California Santa Barbara, Santa Barbara, CA 93106, USA, 
\\ 
$^\ast$ \textbf{teemu.saksala@helsinki.fi, teemu.saksala@rice.edu}}

\begin{abstract}
Given a smooth non-trapping compact 
manifold with strictly convex boundary, we consider an inverse problem of reconstructing the manifold from the scattering data initiated from internal sources. These data consist of the exit directions of geodesics that are emaneted from interior points of the manifold. We show that under certain generic assumption of the 
metric, the scattering data 
measured on the boundary determine the Riemannian manifold up to isometry.
\end{abstract}

\maketitle
\tableofcontents
\section{Introduction, problem setting and main result}
In this paper we consider an inverse problem of reconstructing a Riemannian manifold $(\overline M,g)$ from geodesical data that correspond to the following theoretical measurement set up. Suppose that in a domain $M$ there is a large amount $m$ of point sources, at points $x_1,x_2,\dots,x_m$, sending continuously light (or other high frequency waves) at different frequencies $\omega_j$. Such point sources are observed on the boundary as bright points. Assume that on the boundary an observer records at the point $z\in \p M$ the exit direction of the light from the point source $x_j$, that is, an observer can see at the point $z$ the directions of geodesics coming from $x_j$ to $z$. When the observer moves along the boundary, all existing directions of geodesics coming from the point sources to the boundary are observed. We emphasize that only the directions, not the lengths (i.e., travel times) are recorded. When $m$ becomes larger, i.e., $m\to \infty$,
we can assume that the set of the point sources $\{x_j\}$ form a dense set in $M$. 


Let $(N,g)$ be an $n$-dimensional closed smooth Riemannian manifold, where $n\geq 2$. Suppose that $M \subset N$ is open and has a boundary $\p M \subset N$ that is a smooth $(n-1)$-dimensional submanifold of $N$. We also assume that $\p M$ is strictly convex, meaning that the second fundamental form of $\p M$ (as a submanifold) is positive definite. 

Let $SN\subset TN$ be the unit tangent bundle of the metric $g$. Given $(p,\xi)\in SN$, it induces a unique maximal geodesic $\gamma_{p,\xi}:\R \rightarrow N$ with $\gamma_{p,\xi}(0)=p,\, \dot{\gamma}_{p,\xi}(0)=\xi$. We denote  $S_pN=\{\xi\in T_pN;\ \|\xi\|_g=1\}$. We will use the notation $\pi:TN \to N$ for the canonical projection to the base point, $\pi(x,\xi)=x$.


We define the first exit time function
\begin{equation}
\label{eq:exit_time_function}
\tau_{exit}(p,\xi)=\inf \{t>0; \gamma_{p,\xi}(t)\in N\setminus \overline{M}\},  \:(p,\xi) \in S \overline{M},
\end{equation}
where $S \overline{M}$ is the unit tangent bundle of $\overline{M}$. We assume that 
\begin{equation}
\label{eq_exit time is bounded}
\tau_{exit}(p,\xi)< \infty, \: \hbox{ for all }(p,\xi) \in S \overline{M}.
\end{equation}
This means that $\overline{M}$ is non-trapping.

For each point $p\in  \overline{M}$ we define a \textit{scattering set of the point source $p$} 
\begin{equation}
\label{point source set}
\begin{array}{lll}
R_{\p M}(p)&=&\{(q,\eta^T)\in T \p M ; \textrm{ there exist }\,\xi\in S_pN 
\\
& &\textrm{and } t\in [0,\tau_{exit}(p,\xi)] \textrm{ such
that }
\\
& & q=\gamma_{p,\xi}(t), \eta=\dot  \gamma_{p,\xi}(t)\} \in 2^{T\p M}.
\end{array}
\end{equation}
Here $\eta^T \in T\p M$ is the tangential component of $\eta \in \p SM$ and $2^S$ means the power set of set $S$. See Figure \eqref{fig:data}. 
\begin{figure}[h]
 \begin{picture}(100,150)
  \put(-60,-10){\includegraphics[width=200pt]{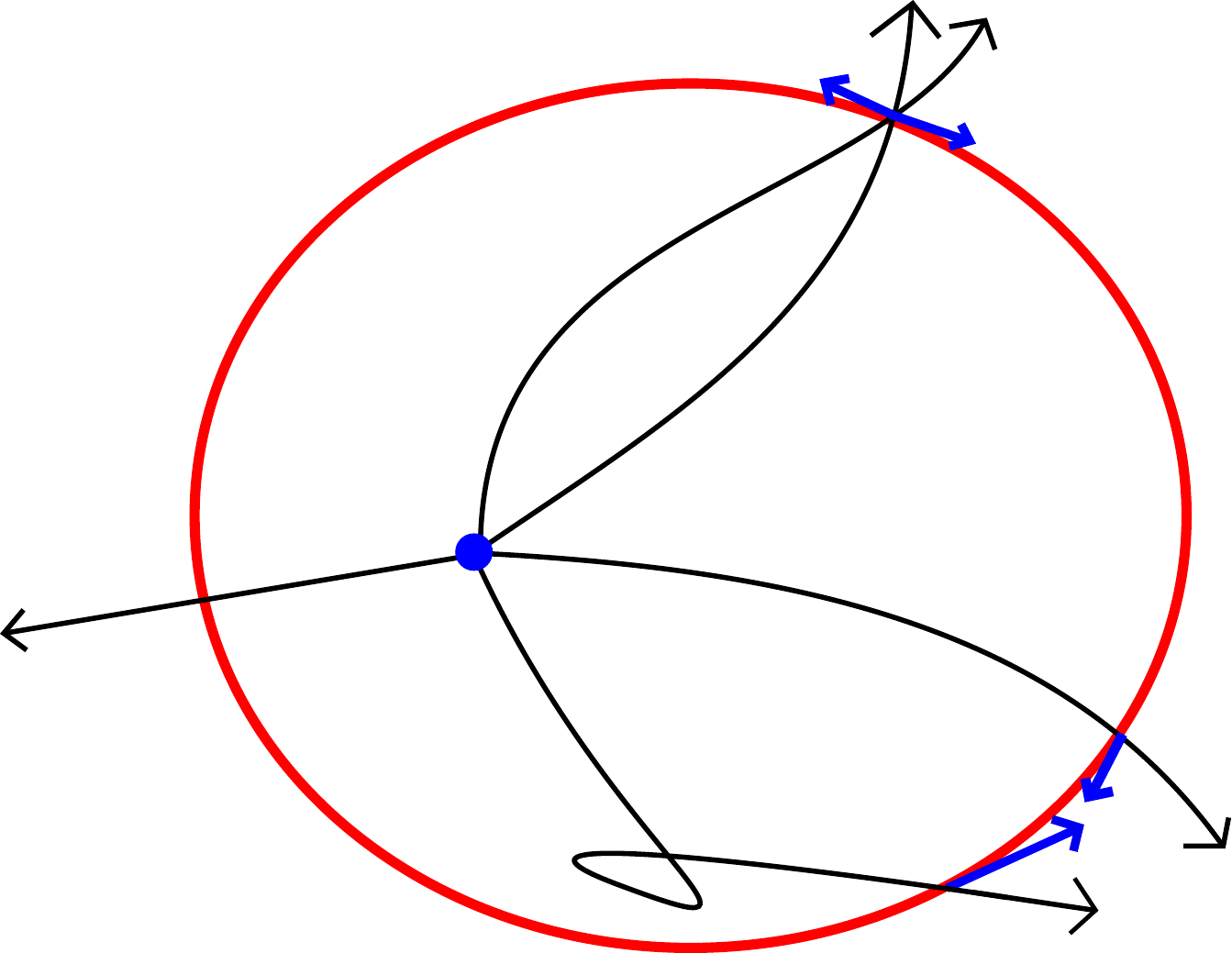}}
  \put(-50,70){$\p M$}
  \put(10,60){$p$}
   \end{picture}
\caption{Here is a schematic picture about our data $R_{\p M}(p)$, where the point $p$ is the blue dot. Here the black arrows are the exit directions of geodesics emitted from $p$ and the blue arrows are our data.}
\label{fig:data}
\end{figure}

Let
\ba
R_{\p M}(M)=\{R_{\p M}(p);\ p\in M\} \subset 2 ^{T\p M}.
\ea
%
%
%
%
Consider the following collection

\begin{equation}
(\p M ,R_{\p M}(M)).
\label{geodesic measurement data1}
\end{equation} 
We call this collection the {\it scattering data of internal sources} that depends on the Riemannian manifold $(\overline{M},g|_{\overline{M}})$. We emphasize the connection of data \eqref{geodesic measurement data1} to the scattering data (see \cite{michel1981rigidite}), that is also considered in the Section \ref{subsec_Boundary} of this paper.  From now on we will use a short hand notation
$$
g:=g|_{\overline M}=i^\ast g, \: i:\overline{M} \hookrightarrow N.
$$
Here $i$ is an embedding defined by $i(x)=x, \: x \in \overline{M}$. The inverse problem considered in this paper is the determination of the Riemannian manifold $(\overline{M},g)$ from the data \eqref{geodesic measurement data1}. More precisely, this means the following: Let $(N_j,g_j), \: j=1,2$ and  $M_j$ be similar to $(N,g)$ and $M$.  We say that the scattering data of internal sources of $(\overline{M}_1,g_1)$ is equivalent to that of $(\overline{M}_2,g_2)$, if
\begin{eqnarray}
\label{eq:collar_neib1}
& & \hbox{there exists a diffeomorphism $\phi: \p M_1 \rightarrow \p M_2$ such that}, 
\\
\label{eq:equivalenc_of_data}
& &
\{D\phi (R_{\p M_1}(q));\ q\in M_1\}= \{R_{\p M_2}(p);\ p\in M_2\}.
\end{eqnarray}
Here $D\phi:  T \p M_1 \to  T\p M_2$ is the differential of $\phi$, i.e., the tangential mapping of $\phi$. 

\medskip

We denote the set of all $C^\infty$-smooth Riemannian metrics on $N$ by $\mathcal{M}et(N)$ (we will assign the smooth Whitney topology (see \cite{hirsch2012differential}) on $\mathcal{M}et(N)$ to make it into a topological space), it turns out that there exists a generic subset (a set that contains an countable intersection of open and dense sets) $G\subset\mathcal{M}et(N)$ such that for $g\in G$, the manifold $(\overline{M},g)$ is determined by the data \eqref{geodesic measurement data1} up to an isometry. 

Given $g\in \Met(N),\, p,q\in N$ and $\ell>0$, we denote the number of $g$-geodesics connecting $p$ and $q$ of length $\ell$ by $I(g,p,q,\ell)$. Define 
$$I(g):=\sup_{p,q,\ell}I(g,p,q,\ell).$$
In Theorem 1.2 of \cite{kupka2006focal} it is shown that there exists a generic set $G \subset \Met(N)$, such that for all $g\in G$, 
\begin{equation}
\label{eq:KPP_property}
I(g)\leq 2 n+2.
\end{equation}

Next we define the collection of admissible Riemannian manifolds. Denote
\begin{equation}
\label{eq:admissible_metrics}
\mathcal{G}=\{(N,g); N \hbox{ is a connected, closed, smooth Riemannian $n$-manifold; } g \hbox{ satisfies } \eqref{eq:KPP_property}\}.
\end{equation} Our main result is as follows,

\begin{Theorem}
\label{th:main}
Let $(N_i,g_i) \in \mathcal{G}, \: i=1,2$ be a smooth, closed, connected Riemannian $n$-manifold $n\geq 2$, $M_i \subset N_i$ be an open set with smooth strictly convex boundary with respect to $g_i$. Suppose that $(\overline{M}_i,g_i)$, $i=1,2$  is non-trapping, in the sense of \eqref{eq_exit time is bounded}.

If  properties \eqref{eq:collar_neib1}--\eqref{eq:equivalenc_of_data} hold, then $(\overline{M_1},g|_{\overline{M}_1})$ is Riemannian isometric to $(\overline{M}_2,g_2|_{\overline{M}_2})$.
\end{Theorem}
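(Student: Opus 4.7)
The plan is to build a candidate map $\Phi:\overline{M}_1\to\overline{M}_2$ extending $\phi$ from the hypothesised scattering equivalence, and then verify that $\Phi$ is a Riemannian isometry. The first key observation is that by strict convexity of $\p M$, every element $(q,\eta^T)\in R_{\p M}(p)$ determines the full exit unit vector $\eta$: the outward normal component is fixed by $\|\eta\|_g=1$ together with the convexity condition. Consequently $(q,\eta^T)$ identifies a unique maximal geodesic in $\overline{M}$, and the set $R_{\p M}(p)$ is equivalent to the family of maximal geodesics in $\overline{M}$ passing through $p$.

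Next I would prove injectivity of the map $\mathcal{R}:p\mapsto R_{\p M}(p)$ on $M$. If $R_{\p M}(p_1)=R_{\p M}(p_2)$ for distinct $p_1,p_2\in M$, then by the previous observation every maximal geodesic through $p_1$ must also pass through $p_2$. Writing $t(\xi)$ for the parameter at which $\gamma_{p_1,\xi}$ meets $p_2$, smooth dependence of geodesics on initial data makes $\xi\mapsto t(\xi)$ a continuous function on $S_{p_1}\overline{M}$; either its level sets are positive-dimensional, producing an infinite family of geodesics of common length joining $p_1$ and $p_2$, or $t$ is constant, yielding an $(n-1)$-parameter family of geodesics of common length, both of which contradict \eqref{eq:KPP_property}. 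Injectivity together with \eqref{eq:equivalenc_of_data} then defines a bijection $\Phi_0:M_1\to M_2$ by the rule $R_{\p M_2}(\Phi_0(p))=D\phi(R_{\p M_1}(p))$.

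To upgrade $\Phi_0$ to a Riemannian isometry I would proceed in two stages. Studying the limiting behaviour of $R_{\p M}(p)$ as $p\to q\in\p M$, the scattering set Hausdorff-collapses onto the zero section of $T\p M$ above a neighbourhood of $q$, and its rescaled first-order profile encodes both the induced metric on $\p M$ and the second fundamental form at $q$; this produces a continuous extension $\Phi:\overline{M}_1\to\overline{M}_2$ with $\Phi|_{\p M_1}=\phi$, and shows that $\phi$ is a boundary isometry. For the interior, the data recovers, by pairing elements of $R_{\p M}(p)$ coming from antipodal directions $\pm\xi\in S_pN$ and hence from the two exits of a common geodesic through $p$, the full boundary scattering relation restricted to those geodesics (this is the topic of Section \ref{subsec_Boundary}). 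Moving $p$ through $M$ along such geodesics and comparing the scattering sets then allows one to extract the boundary distance functions $r_p(q)=d_g(p,q)$; the family $\{r_p\}_{p\in M}$ is classically known to determine a non-trapping compact Riemannian manifold with strictly convex boundary up to isometry, whence $\Phi$ is the desired isometry.

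The chief obstacle is that the data is purely directional: each scattering pair $(q,\eta^T)$ encodes an exit direction but not a travel time. Reconstructing lengths therefore demands a simultaneous comparison of the scattering sets of many interior points and a systematic pairing of the elements of $R_{\p M}(p)$ representing the two ends of a common geodesic. The generic bound \eqref{eq:KPP_property} enters decisively at this point, ruling out the combinatorial ambiguities that would otherwise arise from several distinct geodesic arcs sharing endpoints, and is essential for separating the contribution of different geodesics in the reconstruction.
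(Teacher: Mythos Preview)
Your opening moves are sound and match the paper: the recovery of the full exit vector $\eta$ from its tangential part $\eta^T$ via $\|\eta\|_g=1$ and strict convexity is exactly Lemma~\ref{Le:extension_from_tangential_data_to_full_data}, and the injectivity of $p\mapsto R_{\p M}(p)$ under the generic hypothesis \eqref{eq:KPP_property} is Lemma~\ref{R_F is one-to-one}. Your injectivity sketch has a gap, however: the ``time of hitting $p_2$'' need not be a single-valued continuous function of $\xi$, since $\gamma_{p_1,\xi}$ may pass through $p_2$ several times. The paper handles this carefully (Lemma~\ref{Le:curve_xi(s)}) via the implicit function theorem on a transversal hypersurface at $p_2$ together with a Baire category argument to extract a smooth one-parameter family $s\mapsto(\xi(s),\ell(s))$, and only then invokes the Gauss lemma to force $\ell$ constant and contradict \eqref{eq:KPP_property}.

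The decisive gap is in your final stage. You assert that by ``moving $p$ through $M$ along such geodesics and comparing the scattering sets'' one can extract the boundary distance functions $r_p=d_g(p,\cdot)|_{\p M}$, and then appeal to the classical result that $\{r_p\}_{p\in M}$ determines the manifold. You yourself flag the obstacle: the data is purely directional and contains no lengths. Your proposal offers no concrete mechanism for recovering a single travel time from the collection $\{R_{\p M}(p):p\in M\}$, and I do not see one; the data is invariant under reparametrisations of geodesics that fix the boundary exit directions, so a length cannot be read off from any finite comparison of scattering sets. The paper does \emph{not} attempt to recover distances. Instead it shows (i) the data determines the scattering relation $L_g$ and hence, by \cite{stefanov2008microlocal,lassas2003semiglobal}, the full boundary jet of $g$ (Proposition~\ref{le:jet}); (ii) since each $\overline\Sigma(p,\eta)$ is the image of a maximal geodesic, the two metrics $g$ and $\widetilde g=\Psi^*g_2$ are geodesically equivalent (Lemma~\ref{le:geodesic_equiv}); and (iii) the Matveev--Topalov integral $I_0$ of \cite{topalov2003geodesic}, being constant along the geodesic flow and equal to $1$ on $N\setminus M$, forces $g=\widetilde g$ everywhere (Proposition~\ref{Pr:metrics_coincide}). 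This route replaces the missing length information by the rigidity of geodesically equivalent metrics that agree to infinite order on an open set, and is the substantive idea you are missing.

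A secondary omission is the smooth structure: before any metric argument you need $\Psi$ to be a diffeomorphism, not merely a bijection. The paper spends all of Section~4 building explicit $C^\infty$ charts from the data (the maps $\Theta_{q,\widetilde q,v}$ and $(\widetilde Q_{q,v},\Pi^E_{q,W})$) and verifying that $\Psi$ is smooth in those charts. Your proposal skips this entirely.
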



The auxiliary results needed to prove the main theorem will be stated and proven in such a way that we will use the data \eqref{geodesic measurement data1} to reconstruct an isometric copy of $(\overline{M},g)$. Therefore we formulate also the following theorem.
\begin{Theorem}
\label{th:reconstruction}
Let $(N,g) \in \mathcal{G}$ be a smooth, closed, connected Riemannian $n$-manifold $n\geq 2$, $M \subset N$ be an open set with smooth strictly convex boundary with respect to $g$. Suppose that $(\overline{M},g)$,  is non-trapping, in the sense of \eqref{eq_exit time is bounded}. 

If the data \eqref{geodesic measurement data1}  is given we can reconstruct a isometric copy of $(\overline{M},g)$.
\end{Theorem}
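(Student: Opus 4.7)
The plan is to reconstruct $(\overline{M},g)$ in three conceptual stages. First, I would recover the boundary metric $g|_{T\p M}$ and, with it, the full correspondence between data and inward geodesics. At each $q\in\p M$, strict convexity of $\p M$ forces the set of tangential parts of outward-pointing unit vectors to be exactly the closed unit ball of $g|_{T_q\p M}$; the non-trapping hypothesis combined with the density of interior sources implies that this ball is filled out by $\{\eta^T:(q,\eta^T)\in R_{\p M}(p),\ p\in M\}$, so the boundary metric is determined. Each datum $(q,\eta^T)$ then specifies a unique inward unit vector $-\eta$ (its normal part has magnitude $\sqrt{1-\|\eta^T\|_g^2}$ with the inward orientation), and hence a unique maximal geodesic $\gamma_{q,-\eta}$ in $\overline{M}$.

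Second, I would establish injectivity of $p\mapsto R_{\p M}(p)$ via the genericity hypothesis $I(g)\le 2n+2$. Since $(q,\eta^T)\in R_{\p M}(p)$ iff $p$ lies on $\gamma_{q,-\eta}$, the equality $R_{\p M}(p_1)=R_{\p M}(p_2)$ forces every geodesic through $p_1$ to also pass through $p_2$. The continuous first-passage function $\ell:S_{p_1}N\to\R_{>0}$ recording the travel time to $p_2$ along $\gamma_{p_1,\xi}$ is then everywhere defined; for $n\ge 3$, any regular value of $\ell$ has $(n-2)$-dimensional preimage, producing infinitely many geodesics from $p_1$ to $p_2$ of the same length, in contradiction with $I(g)\le 2n+2$, while a refined argument using the behaviour of $\ell$ near its critical points handles $n=2$. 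Hence $p\mapsto R_{\p M}(p)$ is injective, giving a bijection $M\leftrightarrow R_{\p M}(M)$. Combined with Step~1, the set $S(q,\eta^T):=\{p\in M:(q,\eta^T)\in R_{\p M}(p)\}$ identifies the interior of $\gamma_{q,-\eta}$, and its opposite endpoint datum is the unique other element common to all $R_{\p M}(p)$ with $p\in S(q,\eta^T)$. This yields the full unparametrized geodesic structure of $\overline{M}$ and the pairing of boundary endpoints.

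Finally, I would recover the topology, smooth structure, and metric on $M$. The topology is the unique one making $p\mapsto R_{\p M}(p)$ a homeomorphism (using, e.g., Hausdorff convergence on $2^{T\p M}$), and the smooth structure arises by parametrizing $M$ along pencils of geodesics emanating from a boundary point. Each geodesic carries a canonical unit-speed parametrization anchored at its two endpoints in $\p M$, where the boundary metric is already known; requiring that, at each interior crossing, the two unit-speed parametrizations locate the same point $p$ produces a rigid incidence system whose consistency pins down the relative positions of all interior points along every geodesic through them. Pulling back the resulting lengths via the geodesic flow reconstructs $g$ on $M$. I expect the principal obstacle to lie in this last step: the data contain no explicit travel times, so every length must be inferred indirectly from the incidence pattern of geodesics and the infinitesimal behaviour of $R_{\p M}(p)$ under variation of $p$ along transverse geodesics (a Jacobi-field analysis). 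The assumption $I(g)\le 2n+2$ is used precisely to exclude degenerate configurations of intersecting geodesics which would otherwise leave a gauge freedom in the metric and prevent the consistency argument from closing.
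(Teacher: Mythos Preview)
Your first two stages are broadly aligned with the paper, though your injectivity argument has a gap: the ``first-passage'' function $\ell:S_{p_1}N\to\R_{>0}$ need not be continuous (when a geodesic hits $p_2$ multiple times, the earliest hitting time can jump), so the regular-value/Sard argument does not go through directly. The paper handles this more carefully (its Lemma~\ref{Le:curve_xi(s)}): rather than a global $\ell$, it uses the implicit function theorem against transversal hypersurfaces at each hitting time, plus a Baire category argument, to produce a genuine $C^\infty$ curve $s\mapsto(\xi(s),\ell(s))$; then the Gauss lemma forces $\ell$ constant, contradicting $I(g)\le 2n+2$. This also covers $n=2$ uniformly, without the separate ``refined argument'' you allude to.

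The real gap is your metric step. You propose to recover lengths from a ``rigid incidence system'' of unit-speed parametrizations anchored at the two boundary endpoints, but knowing $g|_{T\partial M}$ gives you no $g$-length for any interior geodesic segment, so there is no canonical unit-speed parametrization to anchor; the consistency constraints you describe do not obviously close, and you yourself flag this as the principal obstacle. The paper's route is entirely different and is the key idea you are missing. From the data one recovers, for every $(p,\eta)\in\partial_-SM$, the image of $\gamma_{p,-\eta}$ as a point set; hence any candidate metric $\widetilde g$ compatible with the data is \emph{geodesically equivalent} to $g$ (same unparametrized geodesics). One then invokes the Matveev--Topalov integral $I_0(p,v)=\big(\det g/\det\widetilde g\big)^{2/(n+1)}\widetilde g(v,v)$, which is constant along the $g$-geodesic flow whenever $g$ and $\widetilde g$ are geodesically equivalent. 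To exploit this one needs $g=\widetilde g$ to infinite order on $\partial M$; that in turn comes from lens-rigidity technology: the data determine the scattering relation $L_g$, Stefanov--Uhlmann recover $\tau_{exit}$ for near-tangential directions, and then the local lens data determine the full boundary jet of $g$. With the boundary jet fixed and $I_0$ constant along geodesics entering from outside, one gets $I_0\equiv 1$ on $SM$, which forces $g=\widetilde g$ pointwise. None of this uses an incidence/Jacobi-field bookkeeping argument of the sort you sketch; the projective-equivalence plus boundary-jet determination is what actually pins down the metric.
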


We want to underline that in the most parts  the proofs for Theorems \ref{th:main} and \ref{th:reconstruction} go side by side! Therefore we will formulate the lemmas and propositions, given in Section 2 and later, in a way having this dual goal in mind.

\color{black}
\subsection{Outline of this paper}
The proof of Theorem \ref{th:main} is contained in Sections 2 to 5. In Section 2 we recall some properties of the strictly convex boundaries and the first exit time function $\tau_{exit}$. We will also give a generic property related to the generic property \eqref{eq:KPP_property} which guarantees that the scattering set $R_{\p M}(p)$ is related to a unique point $p \in M$. The main result of Section 3 is to show that the data $\eqref{geodesic measurement data1}$ determines uniquely the topological structure of the manifold $\overline{M}$. Section 4 is devoted to the smooth structure of $\overline M$ and the final section shows that the data $\eqref{geodesic measurement data1}$ determines the Riemannian structure of $(\overline{M}, g)$.
\subsection{Previous literature}

The research topic of this paper is related to many other inverse problems. 

\subsubsection{Boundary and scattering rigidity problems}
\label{subsec_Boundary}

The scattering data of internal sources \eqref{geodesic measurement data1} considered in our paper takes into account the geodesic rays emitted from all the points of $M$, though we only know the exit positions and directions of these geodesics on the boundary. Instead of the  scattering data of internal sources, the {\it scattering relation} is defined as follows
$$L_g:\p_+SM\to \p_-SM,\quad L_g(x,\xi):=\bigg(\gamma_{x,\xi}(\tau_{exit}(x,\xi)),\dot\gamma_{x,\xi}(\tau_{exit}(x,\xi))\bigg),$$
where 
$$\p_{\pm}SM:=\{(x,\xi)\in \p SM; \pm\langle \xi,\nu(x)\rangle_g\leq 0\}$$
are the sets of inward and outward pointing vectors on the boundary $\p M$. Here $\nu$ is the outward pointing unit normal to $\p M$. Notice that $M$ is non-trapping, thus $L_g$ is well-defined. We will see later that scattering data of internal sources $(\p M, R_{\p M}(M))$ determines the scattering relation $L_g$. The scattering rigidity problem asks: if $L_g=L_{\tilde g}$, does it implies that $\tilde g=\psi^* g$ with $\psi:\overline M\to \overline M$ a diffeomorphism fixing the boundary?  

The scattering rigidity problem is closely related to the boundary rigidity problem, which is concerned with the determination of the metric $g$ (up to a diffeomorphism fixing the boundary) from its boundary distance function $d_g: \p M\times \p M\to \mathbb R$. Notice that given $x,y\in \p M$, $d_g(x,y)$ equals the length of the distance minimizing geodesic connecting $x$ and $y$. See \cite{croke2004rigidity, stefanov2008boundary, uhlmann2016journey} for recent surveys on these topics. In particular, the scattering rigidity problem and boundary rigidity problem are equivalent on simple manifolds. A compact Riemannian manifold $M$ is
simple if the boundary $\p M$ is strictly convex and any two points can be joined by a
unique distance minimizing geodesic. Michel \cite{michel1981rigidite} conjectured that simple manifolds are boundary distance rigid, so far this is known for simple surfaces \cite{pestov2005two}. More recently, boundary rigidity results are established on manifolds of dimension $3$ or larger that satisfy certain global convex foliation condition \cite{stefanov2016boundary, stefanov2017local}. In the current paper, since the scattering data of internal sources contains more information than the scattering relation, we can deal with more general geometry.

It is worth mentioning that the problem of recovering Riemannian 
manifolds by the length data of geodesic rays emitted from internal 
sources was considered in \cite{ Katchalov2001, kurylev1997multidimensional, pestov2015inverse}.

\subsubsection{The rigidity of broken geodesic flows}

Another related inverse problem is concerned with determining a manifold $(\overline M ,g)$ from
the broken geodesic data, which consists of the initial and
the final points and directions, 
and the total length, of the broken geodesics. To define the data we first set up the notations for a broken geodesic
$$
\alpha_{x,\xi_+,z,\eta}(t)=\left\lbrace \begin{array}{l}
\gamma_{x,\xi_+}(t), \: t < s, \: (x,\xi_+) \in \p_+SM \\
\gamma_{z,\eta}(t-s), \: t \geq s, \: (z,\eta) \in SM,
\end{array} \right.
$$
where $z=\gamma_{x,\xi_+}(s)$.  Denote the length of the curve $\alpha_{x,\xi_+,z,\eta}$ by $\ell(\alpha_{x,\xi_+,z,\eta})$, the \textit{Broken scattering relation} is 
\begin{equation*}
\begin{array}{lll}
\mathcal{R} & =& \{((x,\xi_+),(y,\xi_-),t)\in \p_+SM\times \p_-SM\times \R_+; \\
& & t = \ell(\alpha_{x,\xi_+,z,\eta})\: \textrm{ and } (y,\xi_-)=(\alpha_{x,\xi,z,\eta}(t),\partial_t \alpha_{x,\xi,z,\eta}(t)), \\
&& \textrm{ for some } (z,\eta)\in SM\}.
\end{array}
\end{equation*}
In \cite{kurylev2010rigidity} the authors show that the broken scattering data $(\p M, \mathcal{R})$ determines the manifold $(\overline M,g)$ uniquely up to isometry, if dim $\overline M \geq 3$. In particular, there is no restrictions on the geometry of the manifold. 

\subsubsection{Inverse boundary value problem for the wave equations}
Next we will present a third well known inverse problem, where the data is given in the boundary of the manifold. Consider the following initial/boundary value problem for the Riemannian wave equation
\begin{equation}
\label{eq:initial/boundary value problem}
\begin{array}{l}
\p_t^2 w - \Delta_g w = 0 \quad \textrm{in } (0, \infty) \times M,
\\
w|_{\R \times \p M} = f, 
\\
w|_{t = 0} = \p_t w|_{t= 0} = 0, \: f\in C_0^\infty((0,\infty)\times \p M)
\end{array} 
\end{equation}
where $\Delta_g$ is the Laplace-Beltrami operator of metric tensor $g$. It is well known that for every $f\in C_0^\infty((0,\infty)\times \p M)$ there exists a unique $w^f \in C^\infty((0,\infty)\times \overline M)$ that solves \eqref{eq:initial/boundary value problem} (see for instance \cite{Katchalov2001} Chapter 2.3). Thus the Dircihlet-to-Neumann operator 
$$
\Lambda_g:C_0^\infty((0,\infty)\times \p M) \to C^\infty((0,\infty)\times \p M), \: \Lambda_gf(t,x)=\langle \nu(x), \nabla_g w^f(t,x) \rangle_g
$$ 
is well defined and an inverse problem to \eqref{eq:initial/boundary value problem} is to reconstruct $(\overline{M}, g)$ from the data
\begin{equation}
\label{eq:wave_data}
(\p M, \Lambda_g).
\end{equation}
One classical way to solve this problem is the\textit{ Boundary Control method} (BC). This method was first developed by Belishev for the acoustic wave equation on $\R^n$ with an isotropic wave speed
\cite{belishev1987approach}. A geometric version of the method, suitable when the wave speed is given by a Riemannian metric tensor as presented here, was introduced by Belishev and Kurylev \cite{belishev1992reconstruction}. The BC-method is used to determine the collection 
$$
\hbox{BDF}(M):=\{d_g(p,\cdot)|_{\p M}; \: p \in M\}
$$  
of \textit{boundary distance functions}. We emphasize the connection between our data $R_{\p M}(M)$ and $\hbox{BDF}(M)$, that is for any $p \in M$ and  for every $z \in \p M,$ it holds that holds
$$
\nabla_{\p M}( d_g(p,\cdot)|_{\p M})\bigg|_z\in R_{\p M}(p),
$$ 
if $d_g(p, \cdot)$ is smooth at $z$. Above $\nabla_{\p M}$ is the gradient of a Riemannian manifold $(\p M, g|_{\p M})$. One significant difference is that $BDF(M)$ contains only information about the velocities of the short geodesics as $R_{\p M}(p)$ contains also the information from long geodesics. We refer to \cite{Katchalov2001} for a thorough review of the related literature.

Also  the case of partial data has been considered. Let $\mathcal S, \mathcal R \subset \partial M$ be open and nonempty. In this case the inverse problem is the following. Does the restriction operator 
$$
\Lambda_{g,\S,\cR}f:=\Lambda_{g}f|_{\R_+ \times \cR}, \: f\in C^\infty_0(\R_+ \times \cS)
$$  
with $\cS,\cR$ determine $(\overline M,g)$? The answer is positive if $\cS=\cR$, (see \cite{Katchalov1998}). In \cite{lassas2010inverse} the problem has been solved in the case $\overline{\cR}\cap \overline{\cS}\neq \emptyset$. The general case is still an open problem.  So far the sharpest results are \cite{lassas2014inverse, milne2016codomain}.    


\subsubsection{Distance difference functions}

In \cite{LaSa} the collection of \textit{distance difference functions} 
\begin{equation}
\label{eq:DDD}
\{(d_g(p,\cdot)-d_g(p,\cdot))|_{N \setminus M}; p\in M\}\subset L^\infty(N \setminus M),
\end{equation}
was studied. The authors show that $(N \setminus M, g|_{N \setminus M})$ with the collection \eqref{eq:DDD} determines the Riemannian manifold $(N,g)$ up to an isometry, if $N$ is a compact smooth manifold of dimension two or higher and $M$ is open and has a smooth boundary. In the context of this paper one can assume that at the unknown area $M$ there occurs an Earthquake at an unknown point $p\in M$ at an unknown time $t\geq 0$. This Earthquake emits a seismic wave. At the every point of the measurement area $N \setminus M$ there is a device that records the time when the seismic wave hits the corresponding point. This way we obtain the function $(z,w)\mapsto D_p(z,w):=d_g(p,z)-d_g(p,w), \: z,w \in N \setminus M$ that is the travel time difference of the seismic wave.

Fix $z \in \p M$. Suppose that for every $p \in M$ the corresponding distance difference function $D_p(\cdot,z):\p M \to \R$ is smooth. Then there is a close connection between $R_{\p M}(p)$ and
$$
R_{\p M}^1(p):=\bigg\{\nabla_{\p M}D_p(\cdot,z)\bigg|_{q};q\in \p M\bigg\},
$$
Notice that there exists non-trapping manifolds $\overline{M}$ with strictly convex boundary such that for a given $p \in M$ the mapping $\p M\ni q \mapsto d_g(q,p)$ is not smooth.

Moreover to prove the Main theorem \ref{th:main} of this paper we will use techniques that where introduced in \cite{LaSa}. These techniques are based on the Theorem 1 of \cite{topalov2003geodesic}.
\subsubsection{Spherical surface data} Finally we will present one more geometric inverse problem where a geodesical measurement data is considered.  Let $(N,g)$ be a complete or closed Riemannian manifold of dimension $n\in \N $ and $M\subset N$ be an open subset of $N$ with smooth boundary. We denote by $U:=N\setminus \overline{M}$. 

%

In \cite{deHoop1} one considers the \textit{Spherical surface} data 
consisting of the set $U$  and the collection of all pairs $(\Sigma,r)$ where $\Sigma\subset U$ is a smooth $(n-1)$ dimensional submanifold
that can be written in the form
$$
\Sigma=\Sigma_{x,r,W}= \{\exp_x(rv) \in N; v\in W\},
$$ 
where $x\in M$, $r>0$ and $W\subset S_xN$ is an open and connected set. Such surfaces $\Sigma$ are called spherical surfaces, or more precisely, subsets of generalised spheres of radius $r$. We point out that the unit normal vector field 
$$
\nu_{x,r,W}(\gamma_{x,v}(r)):=\dot\gamma_{x,v}(r), \quad v \in W
$$
of the generalized sphere $\Sigma_{x,r,W}$ determines a data that has a natural connection to our data $R_{\p M}(M)$. 

Also, in \cite{deHoop1} one assumes that $U$ is given with its $C^\infty$-smooth coordinate atlas. 
Notice that in general, the spherical surface $\Sigma$ may be related to many centre points and radii. For instance consider the case where $N$ is a two dimensional sphere.

In \cite{deHoop1} it is shown that the Spherical surface data determine uniquely the Riemannian structure of $U$. However these data are not sufficient to determine $(N,g)$ uniquely. In \cite{deHoop1} a counterexample is provided. In \cite{deHoop1} it is shown that the Spherical surface data  determines the universal covering space of $(N,g)$ up to an isometry. 

In \cite{de2014reconstruction} a special case of problem \cite{deHoop1} is considered. The authors study a setup where $M \subset \R^n, \: n\geq 2$ and the metric tensor $g|_{\overline{M}}=v^{-2}e$, for some smooth and strictly positive function $v$. Let $x \in \R^n \setminus M$ and $y :=\gamma_{x,\xi}(t_0) \in M$, for some $\xi \in S_xN$ and $t_0>0$, such that $y$ is not a conjugate point to $x$ along $\gamma_{p,\xi}$. The main theorem of \cite{de2014reconstruction} is that, if the coefficient functions of the shape operators of generalized spheres $\Sigma_{y,r,W}$ are known in a neighborhood $V$ of $\gamma_{x,\xi}([0,t_0))$ and the wave speed $v$ is known in $(N \setminus M)\cap V$, then the wave speed $v$ can be determined in some neighborhood $V' \subset V$ of $\gamma_{x,\xi}([0,t_0])$.


\section{Strictly convex manifolds, about the  extension of the data and the generic property}  
\subsection{Analysis of strictly convex boundaries}
We will write $(z(p),s(p))$ for the boundary normal coordinates of $\p M$, for a point $p$ near $\p M$. (See 2.1,  \cite{Katchalov2001}, for the definitions). Here for any $p \in dom((z,s))$, the mapping $p\mapsto z(p)$ stands for the closest point $z(p)\in \p M$ of $p$ and $p\mapsto s(p)$ is the signed distance to the boundary, i.e.,
$$
|s(p)|=\dist_g(p,\p M)
$$
and
$$
s(p)<0 \hbox{ if } p \in M, \: s(p)=0 \hbox{ if } p \in \p M \hbox{ and } s(p)>0 \hbox{ if } p \in N \setminus \overline{M}.
$$
Thus every $q \in \p M$ has a neighborhood $U \subset N$, such that the map $U \ni p \mapsto (z(p),s(p))$ is a smooth local coordinate system. We will write $\nu$ for the unit outer normal of $\p M$, therefore, $\nabla_g s(p)=\nu$ for $p\in \p M$. Therefore the distance minimizing geodesic from $p$ to $z(p)$ is normal to $\p M$.

\begin{Definition}
The second fundamental form of $\p M$ is said to be positive definite, i.e., the boundary $\p M$ as a submanifold is strictly convex, if the corresponding shape operator 
\begin{equation}
\label{eq:shape_operator}
S:T\p M \to T\p M, \: S(X)=\nabla_X \nu
\end{equation}
is positive definite. Here $\nabla$ is the Riemannian connection of the metric tensor $g$.
\end{Definition} 
Next we recall a few well known results (without proof) about manifolds with strictly convex boundary.
\begin{Lemma}
\label{LE:2_fundamental_form}
Let $(N,g)$ be a complete smooth Riemannian manifold and $M\subset N$ open subset with smooth boundary. Suppose that the second fundamental form of $\p M$ is positive definite. Then for each $(p,\xi) \in S\p M$ there exists $\epsilon>0$ such that the geodesic $\gamma_{p,\xi}(t)\notin \overline M$ for any $t\in (0,\epsilon)$.

Moreover if $(p,\xi) \in S\overline{M} \setminus S\p M$, then the maximal geodesic $\gamma_{p,\xi}:[a,b] \to \overline{M}$ does not hit the boundary $\p M$ tangentially. 

\end{Lemma}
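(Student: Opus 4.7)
The plan is to reduce both claims to a second-order Taylor expansion of the signed boundary distance $s$ along the geodesic $\gamma_{p,\xi}$. I will set $f(t)=s(\gamma_{p,\xi}(t))$ and, using the geodesic equation together with $\nabla_g s|_{\p M}=\nu$ and the eikonal identity $|\nabla_g s|_g\equiv 1$ in boundary normal coordinates, compute $f(0)$, $f'(0)$, and $f''(0)$. The key algebraic identity needed is that, for $p\in\p M$ and $X\in T_p\p M$,
\begin{equation*}
(\nabla^2 s)(X,X)\big|_p=\langle \nabla_X\nabla_g s,X\rangle_g=\langle \nabla_X\nu,X\rangle_g=\langle SX,X\rangle_g,
\end{equation*}
which identifies the boundary Hessian of $s$ with the shape operator.

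Given this identity, the first claim is immediate. For $(p,\xi)\in S\p M$ one has $f(0)=0$, $f'(0)=\langle \nu,\xi\rangle_g=0$ because $\xi\in T_p\p M$, and $f''(0)=\langle S\xi,\xi\rangle_g>0$ by strict convexity. Taylor's theorem then forces $f(t)>0$ for all $t\in(0,\epsilon)$ with $\epsilon>0$ small, which by the sign convention on $s$ is exactly $\gamma_{p,\xi}(t)\notin\overline M$ on that interval.

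For the second claim I will argue by contradiction. Suppose the maximal geodesic $\gamma_{p,\xi}:[a,b]\to\overline M$ with $(p,\xi)\in S\overline M\setminus S\p M$ hits $\p M$ tangentially at some $t_0\in[a,b]$; set $q=\gamma_{p,\xi}(t_0)$ and $\eta=\dot\gamma_{p,\xi}(t_0)\in T_q\p M$. Both $(q,\eta)$ and $(q,-\eta)$ lie in $S\p M$, so applying the first claim to each gives $\gamma_{p,\xi}(t_0+\tau)\notin\overline M$ and $\gamma_{p,\xi}(t_0-\tau)=\gamma_{q,-\eta}(\tau)\notin\overline M$ for all sufficiently small $\tau>0$; i.e., $\gamma_{p,\xi}$ leaves $\overline M$ on a punctured neighborhood of $t_0$. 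The hypothesis $(p,\xi)\notin S\p M$ forces $a<b$ (either $p$ is interior, or $p\in\p M$ with transverse velocity, in which case the geodesic persists in $\overline M$ on a one-sided neighborhood of $0$), so $[a,b]$ intersects the punctured neighborhood of $t_0$ on at least one side, contradicting $\gamma_{p,\xi}([a,b])\subset\overline M$.

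The main obstacle is essentially the single computation hidden inside the Hessian–shape-operator identity displayed above, which is where strict convexity actually enters; once that is in hand, both parts follow by routine Taylor expansion and a symmetric application of the first claim to $(q,\pm\eta)$.
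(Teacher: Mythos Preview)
Your argument is correct and is the standard one. The paper itself does not prove this lemma: it is explicitly introduced as one of ``a few well known results (without proof) about manifolds with strictly convex boundary,'' so there is no proof in the paper to compare against.

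A couple of minor remarks on presentation. In the second part, it may be worth spelling out explicitly why $t_0\neq 0$ (if $t_0=0$ then $\xi=\dot\gamma_{p,\xi}(0)\in T_p\p M$, contradicting $(p,\xi)\notin S\p M$), and then observing that since $t_0\in[a,b]$ with $a<b$, at least one of the half-intervals $(t_0-\epsilon',t_0)$ or $(t_0,t_0+\epsilon)$ meets $[a,b]$ nontrivially; this makes the endpoint cases $t_0=a$ or $t_0=b$ transparent. Your justification of $a<b$ via the sign of $f'(0)=\langle\nu,\xi\rangle_g$ when $p\in\p M$ is fine: since $\xi$ is transverse, $f'(0)\neq 0$, so $f$ changes sign at $0$ and the geodesic lies in $M$ on exactly one one-sided neighborhood of $0$.
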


\begin{Corollary}
\label{Co:interior_of_geo}
Let $(N,g)$ be a complete smooth Riemannian manifold and $M\subset N$ open subset with smooth boundary. Suppose that the second fundamental form of $\p M$ is positive definite. Let $(p,\xi) \in S \overline{M}$. Then
$$
\gamma_{p,\xi}((0,\tau_{exit}(p,\xi)))\subset M.
$$
\end{Corollary}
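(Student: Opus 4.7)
The plan is a direct proof by contradiction. Suppose there exists $t_0 \in (0, \tau_{exit}(p,\xi))$ with $q := \gamma_{p,\xi}(t_0) \in \p M$, and set $\eta := \dot\gamma_{p,\xi}(t_0) \in S_q N$. A preliminary observation is that by the infimum definition of $\tau_{exit}(p,\xi)$ together with the closedness of $\overline M$ and continuity of $\gamma_{p,\xi}$, one has $\gamma_{p,\xi}([0,\tau_{exit}(p,\xi))) \subset \overline M$. I would then split on whether or not $\eta$ is tangent to $\p M$ at $q$.

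In the tangential case $(q,\eta) \in S\p M$, Lemma \ref{LE:2_fundamental_form} produces $\varepsilon > 0$ such that $\gamma_{q,\eta}(s) = \gamma_{p,\xi}(t_0+s) \notin \overline M$ for every $s \in (0,\varepsilon)$; choosing $s$ small enough that $t_0+s < \tau_{exit}(p,\xi)$ contradicts the preliminary observation. In the transversal case $\eta \notin T_q \p M$, I would work in the boundary normal coordinates $(z,s)$ near $q$. The function $\tau \mapsto s(\gamma_{p,\xi}(\tau))$ vanishes at $\tau = t_0$ and has derivative $g(\eta, \nu(q)) \neq 0$ there, because $\nabla_g s = \nu$ on $\p M$ and $\eta$ has nontrivial normal component. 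Hence $s(\gamma_{p,\xi}(\tau)) > 0$ either for $\tau$ just past $t_0$ (which violates the infimum definition of $\tau_{exit}(p,\xi)$) or for $\tau$ just below $t_0$ (which contradicts the preliminary observation). Either way we reach a contradiction, so $\gamma_{p,\xi}((0,\tau_{exit}(p,\xi))) \cap \p M = \emptyset$, and combined with the preliminary observation this gives $\gamma_{p,\xi}((0,\tau_{exit}(p,\xi))) \subset M$.

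The main point requiring care, rather than a genuine obstacle, is keeping the two sign subcases of the transversal setting straight and exploiting the infimum definition of $\tau_{exit}(p,\xi)$ to rule out the geodesic ever leaving $\overline M$ before the exit time. The strict convexity hypothesis has already been distilled into Lemma \ref{LE:2_fundamental_form}, so no further geometric input is required.
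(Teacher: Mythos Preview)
Your proof is correct and follows essentially the same route as the paper, which simply states that the claim ``follows from Lemma~\ref{LE:2_fundamental_form} and \eqref{eq:exit_time_function}'' without further detail. Your argument is a careful unpacking of exactly that: the tangential case invokes the first assertion of Lemma~\ref{LE:2_fundamental_form} directly, while in the transversal case you reprove by hand (via boundary normal coordinates) the elementary fact that a transversal crossing of $\p M$ forces the geodesic to leave $\overline M$, which is what the second assertion of Lemma~\ref{LE:2_fundamental_form} together with the infimum definition of $\tau_{exit}$ is meant to supply.
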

\begin{proof}
The claim follows from Lemma \ref{LE:2_fundamental_form} and \eqref{eq:exit_time_function}.
\end{proof}

\begin{Lemma}
\label{Le:minimizing_geo_from_boundary}
If $\p M$ is strictly convex, then for any $p \in \overline{M}$ there exists a neighborhood $U \subset N$ of $p$ such that for all $z,q \in U\cap \overline{M}$ the unique distance minimizing unit speed geodesic $\gamma$ from $z$ to $q$ is contained in $U$ and $\gamma(t)\in M \cap U$ for all $t \in (0,d(z,q))$. 
\end{Lemma}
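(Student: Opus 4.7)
The statement splits naturally into two cases according to whether $p$ lies in the interior $M$ or on the boundary $\p M$. The interior case is routine: since $p\in M$, I would choose $U$ to be a geodesically convex (totally normal) ball around $p$ of radius less than $\dist_g(p,\p M)$. Then $U\subset M$, every pair of points in $U$ is joined by a unique minimizing geodesic remaining in $U$, and the conclusion is immediate. The real content is the case $p\in\p M$, which is what I describe next.

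For $p\in\p M$, I would introduce boundary normal coordinates $(z,s)$ on a neighborhood $V\subset N$ of $p$, with $s<0$ on $M$, $s=0$ on $\p M$, and $\nabla_g s=\nu$ on $\p M$. The key step is to leverage strict convexity. By the computation $\operatorname{Hess} s(X,X)=\langle \nabla_X \nu,X\rangle_g=\langle S(X),X\rangle_g$ for $X\in T\p M$, the shape operator assumption gives $\operatorname{Hess} s(X,X)>0$ for every nonzero $X\in T\p M$ at $p$. Since the Hessian of $s$ depends continuously on the base point, and positive definiteness on a codimension-one subspace is an open condition, I can shrink $V$ so that
\[
\operatorname{Hess}_g s(X,X)>0\quad\text{for every nonzero } X\in T_qN \text{ with } X\perp_g \nabla_g s,\ q\in V.
\]
Geometrically this says all level sets $\{s=c\}$ meeting $V$ are strictly convex with respect to the outward normal $\nabla_g s/|\nabla_g s|_g$.

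Inside $V$, I would then take $U\subset V$ to be a totally normal neighborhood of $p$, so that any two points $z,q\in U$ are joined by a unique minimizing unit-speed geodesic $\gamma:[0,d_g(z,q)]\to U$. For $z,q\in U\cap\overline M$ set $f(t)=s(\gamma(t))$; then $f(0),f(d_g(z,q))\le 0$, and since $\gamma$ is a geodesic one has $f''(t)=\operatorname{Hess}_g s(\dot\gamma(t),\dot\gamma(t))$. At any interior critical point $t_0$ of $f$ one has $\dot\gamma(t_0)\perp_g\nabla_g s$, hence by the choice of $V$ we get $f''(t_0)>0$, so $f$ cannot attain an interior maximum. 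It follows that $f\le 0$ on $[0,d_g(z,q)]$, which gives $\gamma\subset\overline M$. Moreover, if $f(t_0)=0$ for some $t_0\in(0,d_g(z,q))$, then $t_0$ would be an interior maximum of $f$ (since $f\le 0=f(t_0)$), again contradicting $f''(t_0)>0$. Hence $f(t)<0$, i.e.\ $\gamma(t)\in M\cap U$, for all $t\in(0,d_g(z,q))$.

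The main obstacle is really the bookkeeping in the boundary case, namely verifying that one may shrink the neighborhood so that $\operatorname{Hess}_g s$ is positive on the whole family of tangent spaces to level sets of $s$, not merely on $T\p M$ over $\p M$. Once this openness/continuity argument is made rigorous, the first- and second-derivative test for $f=s\circ\gamma$ closes the proof cleanly, and the strict inequality $\gamma(t)\in M\cap U$ for interior $t$ comes for free from the strict positivity of $\operatorname{Hess}_g s$ at critical points, without needing any separate argument.
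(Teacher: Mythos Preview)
Your argument is correct. The paper does not actually supply a proof of this lemma: it is listed among the ``well known results (without proof)'' recalled at the start of Section~2, so there is no proof in the paper to compare against. Your approach---using the signed boundary distance $s$, the identity $\operatorname{Hess}_g s(X,X)=\langle S(X),X\rangle_g$ on $T\p M$, an openness argument to propagate positivity of $\operatorname{Hess}_g s$ on $(\nabla_g s)^\perp$ to a neighborhood, and then a second-derivative test on $f=s\circ\gamma$---is the standard way this fact is established and it goes through without difficulty.

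One small terminological point: what you call a ``totally normal'' neighborhood you are actually using as a \emph{geodesically convex} one (you need the unique minimizing geodesic between two points of $U$ to stay in $U$, which is Whitehead's theorem rather than the weaker uniform-normal property). This does not affect the validity of the argument, but you may want to adjust the wording.
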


\begin{Lemma}
\label{Le:exit_time_func_is_cont}
If  $(\overline{M},g)$ is compact, non-trapping and $\p M$ is strictly convex, then the first exit time function $\tau_{exit}:S\overline{M}\to \R_+$ given by \eqref{eq:exit_time_function} is continuous and there exists $L>0$ such that 
\begin{equation}
\label{eq:exit_time_func_is_bdd}
\max\{\tau_{exit}(p,\xi);\: (p,\xi)\in S\overline{M}\}\leq L.
\end{equation}
Moreover, the exit time function $\tau_{exit}$ is smooth in $S \overline M \setminus S\p M$ and in $\p_+ SM$.
\end{Lemma}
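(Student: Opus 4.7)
\medskip

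The plan has three parts: apply the implicit function theorem at transverse exits to obtain smoothness, use strict convexity to force continuity at tangential boundary vectors, and deduce the uniform bound from continuity and compactness. As a setup, fix a smooth defining function $\rho\colon N\to\R$ with $\rho<0$ on $M$, $\rho=0$ on $\partial M$, $\rho>0$ on $N\setminus\overline M$, and $d\rho_p(X)=\langle X,\nu(p)\rangle_g$ on $T_pN$ for $p\in\partial M$ (take the signed boundary distance in a collar and extend by a cutoff), and consider the smooth map $F\colon S\overline M\times\R\to\R$ given by $F((p,\xi),t)=\rho(\gamma_{p,\xi}(t))$.

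For smoothness, fix $(p_0,\xi_0)\in S\overline M\setminus S\partial M$ and let $t_0=\tau_{exit}(p_0,\xi_0)$. Corollary~\ref{Co:interior_of_geo} puts $\gamma_{p_0,\xi_0}((0,t_0))\subset M$, so the geodesic reaches $\partial M$ from inside at $t_0$ (or $t_0=0$ if $\xi_0$ is strictly outward), and Lemma~\ref{LE:2_fundamental_form} rules out tangential contact; hence $\dot\gamma(t_0)$ is not tangent to $\partial M$ and $\partial_tF((p_0,\xi_0),t_0)=d\rho(\dot\gamma(t_0))\neq 0$. The implicit function theorem then yields a smooth $\widetilde t(p,\xi)$ with $F((p,\xi),\widetilde t(p,\xi))=0$ near $(p_0,\xi_0)$. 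I would identify $\widetilde t$ with $\tau_{exit}$ locally as follows: for small $\delta>0$, the compact set $\gamma_{p_0,\xi_0}([\delta,t_0-\delta])$ lies in the open set $M$, so by continuous dependence of the geodesic flow on data the same inclusion holds for $\gamma_{p,\xi}$ with $(p,\xi)$ close to $(p_0,\xi_0)$; on the short interval $[0,\delta]$ one uses either $p_0\in M$ or strict transversal entry when $p_0\in\partial M$; finally the sign of $F$ along $\widetilde t$ ensures that $\widetilde t(p,\xi)$ is indeed the first zero.

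For continuity at a tangential vector $\xi_0\in S_{p_0}\partial M$, the second half of Lemma~\ref{LE:2_fundamental_form} gives $\tau_{exit}(p_0,\xi_0)=0$. Given $(p_n,\xi_n)\to(p_0,\xi_0)$, set $\phi_n(t)=\rho(\gamma_{p_n,\xi_n}(t))$; since $\xi_0$ is tangent, $\phi_0(0)=\phi_0'(0)=0$, while strict convexity gives
\[
\phi_0''(0)=\mathrm{Hess}\,\rho(\xi_0,\xi_0)=\langle S\xi_0,\xi_0\rangle>0,
\]
where $S$ is the shape operator \eqref{eq:shape_operator}. Hence $\phi_0(\epsilon)>0$ for every sufficiently small $\epsilon>0$, so by continuous dependence $\phi_n(\epsilon)>0$ for all large $n$; this forces $\tau_{exit}(p_n,\xi_n)\le\epsilon$, and letting $\epsilon\downarrow 0$ gives $\tau_{exit}(p_n,\xi_n)\to 0$. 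Continuity is then global on $S\overline M$; because $S\overline M$ is compact and $\tau_{exit}$ is everywhere finite by the non-trapping hypothesis \eqref{eq_exit time is bounded}, the maximum $L:=\max_{S\overline M}\tau_{exit}<\infty$ yields \eqref{eq:exit_time_func_is_bdd}.

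The main obstacle will be matching $\widetilde t$ with $\tau_{exit}$ uniformly near boundary points of $\partial_+SM$: one must rule out earlier exits of slightly perturbed geodesics, which relies jointly on the non-tangency statement of Lemma~\ref{LE:2_fundamental_form} and on strict convexity (which guarantees that near-tangential perturbations immediately leave $\overline M$ and therefore cannot produce spurious early zeros of $F$).
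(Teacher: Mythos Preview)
The paper does not actually prove this lemma; it simply cites Section~3.2 of Sharafutdinov's \emph{Integral Geometry of Tensor Fields}. Your argument is essentially the standard one found there, and the parts on continuity, the uniform bound, and smoothness on $S\overline M\setminus S\partial M$ are correct.

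There is, however, one genuine gap. The statement asserts that $\tau_{exit}$ is smooth on $\partial_+SM$ \emph{as a manifold with boundary}, i.e.\ up to and including the glancing set $S\partial M$. Your implicit-function argument does not reach these points: at a tangential $(p_0,\xi_0)\in S\partial M$ one has $\partial_tF((p_0,\xi_0),0)=d\rho(\xi_0)=0$, so the hypothesis of the implicit function theorem fails precisely there, and your last paragraph (about ruling out spurious early zeros) does not address this issue. The standard fix is to factor out the known root: for $(p,\xi)\in\partial SM$ one has $F((p,\xi),0)=\rho(p)=0$, so
\[
F((p,\xi),t)=t\cdot G((p,\xi),t),\qquad G((p,\xi),t)=\int_0^1 \partial_tF((p,\xi),st)\,ds,
\]
with $G$ smooth on $\partial SM\times\R$. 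Then $G((p_0,\xi_0),0)=d\rho(\xi_0)=0$ while
\[
\partial_tG((p_0,\xi_0),0)=\tfrac12\,\partial_t^2F((p_0,\xi_0),0)=\tfrac12\,\mathrm{Hess}\,\rho(\xi_0,\xi_0)=\tfrac12\langle S\xi_0,\xi_0\rangle>0
\]
by strict convexity. Now the implicit function theorem applied to $G$ gives a smooth $\tau$ on a neighbourhood of $(p_0,\xi_0)$ in $\partial SM$ with $G((p,\xi),\tau(p,\xi))=0$; on $\partial_+SM$ this root is nonnegative and coincides with $\tau_{exit}$, which yields the claimed smoothness up to $S\partial M$. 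With this addition your proof is complete.
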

\begin{proof}
See Section 3.2. of \cite{sharafutdinov1999ray}.
\end{proof}
\subsection{Extension of the measurement data}
We start with showing that the data \eqref{geodesic measurement data1} determines the boundary metric. This is formulated more precisely in the following lemma.
\begin{Lemma}
\label{Le:complete:scattering_data}
Let $(N,g)$ be a smooth, closed, connected Riemannian $n$-dimensional manifold $n\geq 2$, $M\subset N$ an open set with smooth boundary. Then \eqref{geodesic measurement data1} determines the data
\begin{equation}
((\p M, g|_{\p M}) ,R_{\p M}(M)),
\label{geodesic measurement data}
\end{equation}
where $g|_{\p M}= i^{\ast}g $ and $i:\p M \hookrightarrow \overline M$. 

More precisely if $(N_i,g_i)$ and $M_i$ are as in Theorem \ref{th:main} and  \eqref{eq:collar_neib1}--\eqref{eq:equivalenc_of_data} hold, then
\begin{equation}
\label{eq:collar_neib}
\hbox{$\phi: \p M_1 \rightarrow \p M_2$ is a diffeomoprhism  such that $\phi^\ast (g_2|_{\p M_2})=g_1|_{\p M_1}$.}
\end{equation}

\end{Lemma}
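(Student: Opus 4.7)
The strategy is to recover, for each $q\in\p M$, the open unit ball of $g|_{\p M}$ in $T_q \p M$ directly from the data, and then to exploit the fact that a centered open ellipsoid in a vector space determines the inner product whose unit ball it is. Concretely, for each $q \in \p M$ set
$$V_q := \{v \in T_q\p M : (q,v) \in R_{\p M}(p) \text{ for some } p \in M\}.$$
The claim to prove is $V_q = \{v \in T_q\p M : g|_{\p M}(v,v) < 1\}$.

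The inclusion $V_q \subset \{|v|_g < 1\}$ is the easy direction. If $(q,\eta^T) \in R_{\p M}(p)$ with $p \in M$, then $q = \gamma_{p,\xi}(t) \in \p M$ for some $t \in [0, \tau_{exit}(p,\xi)]$. Corollary \ref{Co:interior_of_geo} together with $p \in M$ forces $t = \tau_{exit}(p,\xi)$, so $\eta$ is the exit velocity at $q$. By Lemma \ref{LE:2_fundamental_form} the exit is non-tangential, hence $\langle \eta, \nu(q)\rangle > 0$ and
$$|\eta^T|_g^2 = 1 - \langle\eta,\nu(q)\rangle^2 < 1.$$

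For the reverse inclusion, given $v \in T_q\p M$ with $|v|_g < 1$, lift it to the outward unit vector $\eta = v + \sqrt{1 - g(v,v)}\,\nu(q) \in S_qN$. The geodesic $\gamma_{q,-\eta}$ has a strictly inward normal component at $q$, so by inspection in the boundary normal coordinates $(z,s)$ it enters $M$ immediately. Applying Lemma \ref{Le:minimizing_geo_from_boundary} at $q$ I obtain $s_0 > 0$ with $\gamma_{q,-\eta}((0,s_0]) \subset M$. Setting $p := \gamma_{q,-\eta}(s)$ for any $s \in (0, s_0]$, the reverse geodesic $t \mapsto \gamma_{q,-\eta}(s-t)$ starts at $p \in M$, stays in $M$ for $t \in [0,s)$, and first exits $\overline M$ at $t=s$ at the point $q$ with exit velocity $\eta$. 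Hence $(q, v) = (q, \eta^T) \in R_{\p M}(p)$, i.e., $v \in V_q$.

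Combining the two inclusions, $V_q$ is exactly the open $g|_{\p M}$-unit ball in $T_q\p M$, which determines the inner product $g|_{\p M}$ at $q$ uniquely (via its Minkowski functional and polarization). Under the equivalence \eqref{eq:equivalenc_of_data}, $D\phi$ sends $R_{\p M_1}(p)$ to $R_{\p M_2}(p')$ for some $p' \in M_2$ and vice versa; chasing through the definition of $V_q$ yields $D\phi|_{T_q\p M_1}(V_q^{(1)}) = V_{\phi(q)}^{(2)}$. Therefore the linear isomorphism $D\phi|_{T_q\p M_1}$ sends one unit ball to the other and is a linear isometry between $(T_q\p M_1, g_1|_{\p M_1})$ and $(T_{\phi(q)}\p M_2, g_2|_{\p M_2})$, proving $\phi^*(g_2|_{\p M_2}) = g_1|_{\p M_1}$. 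The main technical step is the reverse inclusion, which relies essentially on strict convexity to ensure that the short reverse geodesic exits $\overline M$ precisely at $q$ and nowhere earlier.
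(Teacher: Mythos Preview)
Your proof is correct and follows essentially the same idea as the paper: both show that the tangential exit directions at a fixed boundary point $q$ coming from interior sources fill exactly the open unit ball of $g|_{\p M}$ in $T_q\p M$, and then recover the inner product by polarization. The only cosmetic difference is that the paper works ray-by-ray (showing $\{\|\xi\|_g : \xi \in R(\eta)\}=(0,1)$ for each direction $\eta$ and extracting $\|\eta\|_g$ as an infimum), whereas you identify the whole open unit ball at once; the underlying geometric input (Lemma~\ref{LE:2_fundamental_form} for $\subset$, a short inward geodesic for $\supset$) is the same.
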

\begin{proof}
We will start with showing that we can recover the metric tensor $g|_{\p M}$. Choose $p \in \p M$, $\eta \in (T_p\p M\setminus \{0\})$ and consider the set
\begin{equation}
\label{eq:tangential_comp_of_exit_directions}
\begin{array}{lll}
R(\eta)&:=&\{\xi \in T_p \p M; \hbox{ there exists  $z \in M$ such that } \xi \in R_{\p M}(z)
\\
&& \hbox{and } a>0, \hbox{ that satisfies }\: \xi=a\eta \}.
\end{array}
\end{equation}
It is easy to see that the set $R(\eta)$ is not empty and by Lemma \ref{LE:2_fundamental_form} we also have
\begin{equation}
\label{eq:boundary_metric1}
\{\|\xi\|_g \in \R_+; \xi \in R(\eta)\}=(0,1) \subset \R.
\end{equation}
Therefore, 
\begin{equation}
\label{eq:boundary_metric2}
\|\eta\|_g=\inf\{a>0;\frac{\eta}{a} \in R(\eta)\}.
\end{equation}
Since $\eta \in T_p\p M$ was arbitrary we have recovered the $g$-norm function $\|\cdot\|_g:T_p \p M \to \R$. Since norm $\|\cdot\|_g$ is given by the $g$-inner product,  we recover $\langle \cdot, \cdot \rangle_g$ using the parallelogram rule, that is
$$
\langle \xi, \eta\rangle_g=\frac{\|\xi\|_g^2+\|\eta\|_g^2-\|\xi-\eta\|_g^2}{2}, \quad \eta, \xi \in T_p \p M.
$$

\medskip 
Suppose then that for manifolds $(\overline{M_1},g_1)$ and $(\overline{M_2},g_2)$ properties \eqref{eq:collar_neib1}--\eqref{eq:equivalenc_of_data} are valid. Choose $p \in \p M_1$ and $\eta \in T_p \p M_1$ and denote $R(\eta)$ as in \eqref{eq:tangential_comp_of_exit_directions}. Then by \eqref{eq:collar_neib1}--\eqref{eq:equivalenc_of_data} we have
$$
D\phi (R(\eta))=R(D \phi \eta)\subset T_{\phi(p)}\p M_2.
$$
and by the equations \eqref{eq:boundary_metric1}--\eqref{eq:boundary_metric2} we have $\|\eta\|_{g_1}=\|D \phi \eta\|_{g_2}$  and equation \eqref{eq:collar_neib} follows from the parallelogram rule. 
\end{proof}

Let $p \in \overline M$, we define \textit{the complete scattering set of the point source} $p$ as 
\begin{equation} 
\label{eq:full_point_source_set}
\begin{array}{lll}
R^E_{\p M}(p)&=& \{(q,\eta)\in \p_- SM; \: \textrm{ there is }\xi\in S_pN 
\\
&& \textrm{ and } t\in [0,\tau_{exit}(p,\xi)] \textrm{ such
that }\\
&& q=\gamma_{p,\xi}(t), \eta=\dot  \gamma_{p,\xi}(t)\}.
\end{array}
\end{equation}
We emphasize that the difference between $R_{\p M}(p)$ and $R_{\p M}^E(p)$ for $p \in \overline M$ is that $R_{\p M}(p)$ contains the tangential components of vectors $\xi \in R^E_{\p M}(p)$. We denote 
$R^E_{\p M}(M):=\{R^E_{\p M}(p); p \in M\}$. In the next Lemma we will give an equivalent definition for \eqref{eq:full_point_source_set}.
\begin{Corollary}[Corollary of Lemma \ref{LE:2_fundamental_form}]
\label{Co:new_def_for_data}
For any $p \in \overline{M}$
$$
R^E_{\p M}(p)=\{(\gamma_{p,\xi}(\tau_{exit}(p,\xi)),\dot{\gamma}_{p,\xi}(\tau_{exit}(p,\xi))); \: \xi \in S_pN\}.
$$
\end{Corollary}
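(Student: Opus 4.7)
My plan is to verify the identity by establishing the two inclusions separately, using Corollary~\ref{Co:interior_of_geo} to rule out interior-time contributions and Lemma~\ref{LE:2_fundamental_form} to handle the tangential pathology at the boundary. The driving observation in both directions is that on a given geodesic starting at $p$, the only parameter value that can simultaneously place the basepoint on $\p M$ and the velocity in $\p_- SM$ is $t=\tau_{exit}(p,\xi)$, so the parametrization by $t\in[0,\tau_{exit}(p,\xi)]$ in the definition of $R^E_{\p M}(p)$ is redundant.

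For the inclusion from left to right, I would take $(q,\eta)\in R^E_{\p M}(p)$ with associated $\xi\in S_pN$ and $t\in[0,\tau_{exit}(p,\xi)]$. Since $(q,\eta)\in\p_-SM$ one has $q\in\p M$. If $t\in(0,\tau_{exit}(p,\xi))$, Corollary~\ref{Co:interior_of_geo} gives $\gamma_{p,\xi}(t)\in M$, contradicting $q\in\p M$, so $t\in\{0,\tau_{exit}(p,\xi)\}$. In the remaining case $t=0$ one has $p=q\in\p M$ and $\xi=\eta\in\p_- SM$, i.e.\ $\langle\xi,\nu(p)\rangle_g\ge 0$. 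If $\langle\xi,\nu(p)\rangle_g>0$ the geodesic immediately leaves $\overline M$; if $\xi\in T_p\p M$, Lemma~\ref{LE:2_fundamental_form} forces $\gamma_{p,\xi}(t)\notin\overline M$ for small $t>0$; in both subcases $\tau_{exit}(p,\xi)=0=t$, absorbing this case into the first.

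For the reverse inclusion, I would fix $\xi\in S_pN$, set $T:=\tau_{exit}(p,\xi)\in[0,T]$, and check $(\gamma_{p,\xi}(T),\dot\gamma_{p,\xi}(T))\in\p_- SM$. When $T>0$, the definition of $\tau_{exit}$ and continuity give $\gamma_{p,\xi}([0,T))\subset\overline M$ and $\gamma_{p,\xi}(T)\in\overline{N\setminus\overline M}\cap\overline M=\p M$; the exit velocity $\dot\gamma_{p,\xi}(T)$ cannot be strictly inward, for otherwise the geodesic would re-enter $M$ and contradict the infimum property of $T$, and it cannot be tangential by the non-tangentiality clause of Lemma~\ref{LE:2_fundamental_form} applied to the geodesic considered backwards in time. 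When $T=0$, a sequence $t_n\downarrow 0$ with $\gamma_{p,\xi}(t_n)\notin\overline M$ together with $p\in\overline M$ forces $p\in\p M$ and rules out $\xi$ being strictly inward, so $(p,\xi)\in\p_- SM$.

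The only slightly delicate point is the bookkeeping at boundary initial points $p\in\p M$: one must track whether $\xi$ is strictly inward, tangential, or strictly outward, and check that the exceptional value $t=0$ produces an admissible element of $R^E_{\p M}(p)$ exactly when $\tau_{exit}(p,\xi)=0$, so that the two parameter values collapse. Once this is handled, the remainder is a direct application of the two results just cited.
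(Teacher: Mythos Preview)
Your argument is correct and follows the same route as the paper, which simply invokes Corollary~\ref{Co:interior_of_geo} (and implicitly Lemma~\ref{LE:2_fundamental_form}) to conclude in one line; you have merely unpacked the two inclusions and the boundary case $t=0$ that the paper leaves to the reader. Two minor remarks: the display ``$T:=\tau_{exit}(p,\xi)\in[0,T]$'' is a typo, and in the reverse inclusion you need not exclude a tangential exit direction since tangential vectors already lie in $\p_-SM$.
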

\begin{proof}
Since $\p M$ is strictly convex and $M$ is non-trapping, the claim follows from the definition of $R^E_{\p M}(p)$ and Corollary \ref{Co:interior_of_geo}.
\end{proof}

Let $(N_i,g_i)$ and $M_i$ be as in Theorem \ref{th:main} and suppose that \eqref{eq:collar_neib1}--\eqref{eq:equivalenc_of_data} hold. Since $\overline{M}_i$ is compact, there exists $r>0$ and a neighborhood $K_i \subset \overline{M}_i$ of $\p M_i$ such that the mapping 
$$
f_i:\p M_i \times (-r,0]\to K_i \subset \overline{M}_i, \: f_i(z,s):=\exp_z(s\nu(z))
$$
is a diffeomorphism. 
By \eqref{eq:collar_neib1} the map
$$
\widetilde \Phi:\p M_1 \times (-r,0]\to \p M_2 \times (-r,0],\: \widetilde \Phi(z,s)=(\phi(z),s).
$$
is a diffeomorphism. Thus the mapping
\begin{equation}
\label{eq:big_phi}
\Phi:K_1 \to K_2, \: \Phi=f_2 \circ \widetilde \Phi \circ f_1^{-1},
\end{equation}
is a diffeomorphism between $K_1$ and $K_2$. Moreover the map $\widetilde \Phi$ is a local representation of $\Phi$ in the boundary normal coordinates.

\begin{Lemma}
\label{Le:extension_from_tangential_data_to_full_data}
Suppose that $(N,g)$ and $M$ are as in Theorem \ref{th:main}. Then the data \eqref{geodesic measurement data} determines the complete scattering data 
\begin{equation}
((\p M, g|_{\p M}), R^E_{\p M}(\overline{M})).
\label{bigger geodesic measurment data}
\end{equation}

More precisely if $(N_i,g_i)$ and $M_i$ are as in Theorem \ref{th:main} such that \eqref{eq:equivalenc_of_data} and \eqref{eq:collar_neib} hold, then 
\begin{equation}
\label{eq:Phi_preserves_boundary_inner_product}
\langle \eta, \xi \rangle_{g_1}=\langle D \Phi \eta, D\Phi\xi \rangle_{g_2}, \hbox{ for all } \xi,\eta \in \p TM_1
\end{equation}
and
\begin{equation}
\label{eq:equivalence_of_biger_data}
\{D\Phi (R^E_{\p M_1}(q));\ q\in \overline{M}_1\}= \{ R^E_{\p M_2}(p);\ p\in \overline{M}_2\}.
\end{equation} 
\end{Lemma}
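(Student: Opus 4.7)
The plan is to recover, for each datum $(q, \eta^T) \in R_{\p M}(p)$, the unique unit vector $\eta \in T_q\overline M$ with the prescribed tangential component and outward orientation, and then to verify that this pointwise procedure intertwines with the boundary diffeomorphism $\Phi$.

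First, I would fix $p \in M$ and $(q, \eta^T) \in R_{\p M}(p)$. By Corollary \ref{Co:new_def_for_data}, $\eta^T$ is the tangential part of a unit exit vector $\eta \in \p_- SM$, and Lemma \ref{LE:2_fundamental_form} ensures that $\eta$ is not tangent to $\p M$, so $\|\eta^T\|_{g|_{\p M}} < 1$. Since $g|_{\p M}$ is available from Lemma \ref{Le:complete:scattering_data}, the formula
\[
\eta = \eta^T + \sqrt{1 - \|\eta^T\|^2_{g|_{\p M}}}\, \nu(q)
\]
yields the unique lift to $\p_- SM$, and applying this to every element of $R_{\p M}(p)$ reconstructs $R^E_{\p M}(p)$. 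For a boundary source $p \in \p M$, I would pick interior sources $p_n \in M$ with $p_n \to p$ and invoke continuity of the geodesic flow together with the continuity of $\tau_{exit}$ (Lemma \ref{Le:exit_time_func_is_cont}) to show that $R^E_{\p M}(p_n) \to R^E_{\p M}(p)$ in Hausdorff distance as subsets of $\p_- SM$. The collection $\{R^E_{\p M}(p);\ p \in \p M\}$ is then recovered as the set of accumulation points of $\{R^E_{\p M}(p);\ p \in M\}$ in this topology, completing the extension to $R^E_{\p M}(\overline M)$.

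Next, I would establish equivariance. Because $\widetilde\Phi(z,s) = (\phi(z), s)$ preserves the normal coordinate $s$, its differential $D\Phi$ at a point $q \in \p M_1$ sends the outward unit normal $\nu_1(q)$ to $\nu_2(\phi(q))$. Combined with the boundary isometry $\phi^\ast(g_2|_{\p M_2}) = g_1|_{\p M_1}$ from Lemma \ref{Le:complete:scattering_data}, this shows that $D\Phi$ preserves the orthogonal splitting $T\overline M|_{\p M} = T\p M \oplus \R \nu$ and is metric-preserving on $T\p M$, proving \eqref{eq:Phi_preserves_boundary_inner_product}. Consequently the lift formula above commutes with $D\Phi$, and \eqref{eq:equivalence_of_biger_data} follows by applying $D\Phi$ term by term to the already-known equality \eqref{eq:equivalenc_of_data} and using continuity at the boundary-point limits.

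The main obstacle is the boundary-source step: one must confirm that the Hausdorff limit $\lim_{p_n \to p \in \p M} R^E_{\p M}(p_n)$ genuinely produces all of $R^E_{\p M}(p)$, including the $\tau_{exit}=0$ data $(p, \xi)$ coming from outward- or tangentially-pointing directions at $p$, without introducing spurious accumulation vectors. This is where strict convexity (via Lemma \ref{LE:2_fundamental_form}) and the continuity of $\tau_{exit}$ (Lemma \ref{Le:exit_time_func_is_cont}) are essential, as they pin down the behaviour of exit-times of nearby geodesics emanating from interior points close to $p$.
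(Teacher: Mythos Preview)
Your argument for interior sources and for the inner-product preservation \eqref{eq:Phi_preserves_boundary_inner_product} matches the paper's proof essentially verbatim: both use the lift formula $\eta=\eta^T+\sqrt{1-\|\eta^T\|^2}\,\nu$ together with the fact that $\Phi$ respects the normal direction in boundary normal coordinates.

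For boundary sources $p\in\p M$, however, you take a genuinely different route. The paper does \emph{not} pass to Hausdorff limits. Instead, for a candidate $(q,\eta)\in\p_-SM$ it forms the set
\[
\Sigma(q,\eta)=\{R^E_{\p M}(z)\in R^E_{\p M}(M):\ (q,\eta)\in R^E_{\p M}(z)\},
\]
which is the image under $R^E_{\p M}$ of the open geodesic arc $\gamma_{q,-\eta}((0,\tau_{exit}(q,-\eta)))$, and tests membership $(q,\eta)\in R^E_{\p M}(p)$ by checking whether $\Sigma(q,\eta)=\Sigma(p,\xi)$ for some $(p,\xi)\in S_pN$ (with the tangential case $\Sigma=\emptyset$ handled separately). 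This gives a direct, pointwise criterion entirely in terms of the already-recovered interior data, and the equivariance \eqref{eq:equivalence_of_biger_data} at boundary points follows because $\widehat{D\Phi}$ carries $\Sigma(q,\eta)$ to $\Sigma(\phi(q),D\Phi\eta)$.

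Your approach instead anticipates the continuity of $p\mapsto R^E_{\p M}(p)$ (the paper's Lemma~\ref{overline R_F is continuous}, proved in Section~3) and recovers $R^E_{\p M}(\overline M)$ as the Hausdorff closure of $R^E_{\p M}(M)$. This is legitimate and arguably cleaner: the continuity lemma depends only on the geometry of $(\overline M,g)$, not on Lemma~\ref{Le:extension_from_tangential_data_to_full_data}, so there is no circularity. One small imprecision: you describe $\{R^E_{\p M}(p):p\in\p M\}$ as ``the set of accumulation points'' of the interior collection, but what the closure argument actually delivers is the full set $R^E_{\p M}(\overline M)$, without separating boundary from interior contributions. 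That suffices for the lemma as stated. The paper's $\Sigma$-set method, by contrast, is more constructive and avoids any appeal to the Hausdorff topology at this stage; it also sets up machinery (the sets $\Sigma$ and $\overline\Sigma$) that is reused repeatedly in Sections~4 and~5.
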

\begin{proof}
We will start with showing that for every $q \in M$ we can recover $R_{\p M}^E(q)$.
Choose $q \in M$ and let $(p,\eta)\in \p_- S\overline{M}$ be such that $(p,\eta^T) \in R_{\p M}(q)$. Then $\|\eta^T\|_g < 1$ and in the boundary normal coordinates $x \mapsto (z(x),s(x))$ close to $p$  the vector $\eta$ can be written as
\begin{equation}
\label{eq:rep_of_boundary_vector}
\eta=\eta^T+\big(\sqrt{1-\|\eta^T\|_g^2}\big)\nu.
\end{equation}
Since $g|_{\p M}$ is known we have recovered $\eta$ in the boundary normal coordinates. Since $q\in M$ was arbitrary, we have recovered the collection $R^E_{\p M}(M)$.

Next we will show that for every $q \in \p M$ we can recover $R_{\p M}^E(q)$. Let $q \in \p M$ and $(p,\eta)\in \p_- SM$. We will define a set
\begin{equation}
\label{eq:image_of_open_geo_1}
\begin{array}{lll}
\Sigma(p,\eta)&:=&\{R^E_{\p M}(q') \in R_{\p M}^E(M); \:  (p,\eta) \in R^E_{\p M}(q')\}
\\
&=&\{R^E_{\p M}(q')\in R_{\p M}^E(M): q' \in \gamma_{p,-\eta}(0,\tau_{exit}(p,-\eta))\},
\end{array}
\end{equation}
with which we can verify if $(p,\eta)\in R^E_{\p M}(q)$ or not. Notice that by Lemma \ref{LE:2_fundamental_form} $\Sigma(p,\eta)=\emptyset$ if and only if $\gamma_{p, -\eta}([0,\tau_{exit}(p,-\eta)])\cap M$ is empty if and only if $\eta$ is tangential to the boundary. 

Suppose first that $\Sigma(p,\eta) \neq \emptyset$. Then $(p,\eta)\in R^E_{\p M}(q)$ if and only if there exists $(q,\xi)\in S_qN$ such that $\Sigma(p,\eta)=\Sigma(q,\xi)$. Suppose then that $\Sigma(p,\eta)=\emptyset$ and thus $\eta$ is tangential to $\p M$. By Lemma \ref{LE:2_fundamental_form} holds $\tau_{exit}(p,\eta)=0$. Therefore, $\eta \in R_{\p M}^E(q)$ if and only if $p=q$. We conclude that we can always check with data \eqref{geodesic measurement data} if $(p,\eta)\in \p_- SM$ is included in $R^E_{\p M}(q)$ or not. Since $q\in \p M$ was arbitrary, we have recovered the collection $R^E_{\p M}(\p M)$. Therefore, we have recovered the set $R_{\p M}^E(\overline{M})$.

\medskip
Next we will verify \eqref{eq:Phi_preserves_boundary_inner_product} and \eqref{eq:equivalence_of_biger_data}. Let $(N_i,g_i)$ and $M_i$ be as in Theorem \ref{th:main} such that \eqref{eq:equivalenc_of_data} and \eqref{eq:collar_neib} hold.  Choose $p \in \p M_1$ and $\xi, \eta \in T_p \p M_1$. By \eqref{eq:collar_neib}, \eqref{eq:big_phi} and \eqref{eq:rep_of_boundary_vector} we have
$$
\langle D\Phi \xi, D\Phi \eta  \rangle_{g_2}=\langle D\phi \xi^T, D\phi \eta^T  \rangle_{g_2}+\sqrt{1-\|D\phi \eta^T\|_{g_2}^2}\sqrt{1-\|D \phi\xi^T\|_{g_2}^2}=\langle \xi,  \eta  \rangle_{g_1}.
$$
Thus the equation \eqref{eq:Phi_preserves_boundary_inner_product} is valid. 

Next we will prove
\begin{equation}
\label{eq:full_interiori_equivalence_data}
\{D\Phi (R^E_{\p M_1}(q));\ q\in M_1\}= \{ R^E_{\p M_2}(p);\ p\in M_2\}.
\end{equation} 
Let $q \in M_1$. By \eqref{eq:equivalenc_of_data} there exists $z \in M_2$ such that 
$$
R_{\p M_2}(z)=D\phi R_{\p M_1}(q).
$$ 
Let $(p,\eta) \in  R^E_{\p M_1}(q)$. Then $\|\eta\|_{g_1}=1$ and $ \eta^T\in R_{\p M_1}(q)$. Since $D\phi \eta^T\in R_{\p M_2}(z)$ we conclude by \eqref{eq:Phi_preserves_boundary_inner_product} that $D\Phi(p,\eta) \in R^E_{\p M_2}(z)$.
Therefore, we have $D\Phi (R^E_{\p M_1}(q)) \subset R^E_{\p M_2}(z) $. By symmetric argument we also have $R^E_{\p M_2}(z)\subset D\Phi( R^E_{\p M_1}(q))$. Thus we have proved the left hand side inclusion in  \eqref{eq:full_interiori_equivalence_data}. Again by symmetric argument we prove the right hand side inclusion in \eqref{eq:full_interiori_equivalence_data}.
 
Next we will show that
$$
\{D\Phi (R^E_{\p M_1}(q));\ q\in\p M_1\}= \{ R^E_{\p M_2}(p);\ p\in \p M_2\}.
$$
Let $q \in \p M_1$. By the definition of the mapping $\Phi$ and the equation \eqref{eq:Phi_preserves_boundary_inner_product} it is enough to prove that
\begin{equation} 
D\Phi (R^E_{\p M_1}(q))=R^E_{\p M_2}(\phi(q)).
\label{eq:equality_of_measurement_sets}
\end{equation}
Let $(p,\eta) \in  R^E_{\p M_1}(q)$ and denote $\Sigma(p,\eta)$ as in \eqref{eq:image_of_open_geo_1}.
By equations \eqref{eq:Phi_preserves_boundary_inner_product} and  \eqref{eq:full_interiori_equivalence_data} it holds that the set
\begin{equation}
\label{eq:geodesics_are_the_same}
\begin{array}{ll}
\Sigma(\Phi p, D\Phi \eta)& :=\{R_{\p M_2}(z);z\in M_2, D\Phi \eta \in R_{\p M_2}(z)\}
\\
& =\{D\Phi R_{\p M_1}(q');R_{\p M_1}(q')\in \Sigma(p,\eta)\}
\end{array}
\end{equation}
is empty if and only if $\Sigma(p,\eta)$ is empty. Suppose first that $\Sigma(p,\eta)$ is empty and thus $p=q$ and $\eta$ is tangential to the $\p M_1$. Therefore, by equation \eqref{eq:collar_neib} we have
$$
D\Phi \eta=D\phi \eta \in \bigg(R_{\p M_2}(\phi(q)) \cap R^E_{\p M_2}(\phi(q))\bigg).
$$ 
Suppose then that $\Sigma(p,\eta)$ is not empty. This implies that there exists a vector $\xi \in S_qN_1$ that satisfies $\Sigma(p,\eta)=\Sigma(q,\xi)$. Thus $\Sigma(\Phi p, D\Phi \eta)=\Sigma(\Phi q, D\Phi \xi)$ and this implies that $D\Phi(p,\eta) \in R_{\p M_2}(\phi(q))$. This completes the left hand inclusion of \eqref{eq:equality_of_measurement_sets}. Replace $\Phi$ by $\Phi^{-1}$ to prove the right hand side inclusion of \eqref{eq:equality_of_measurement_sets}. 

Therefore, \eqref{eq:equivalence_of_biger_data} is valid.
\end{proof}

\subsection{A generic property}
We will now formulate a generic property in $\Met(N)$ that is related to the complete scattering data \eqref{bigger geodesic measurment data}. 
\begin{Definition} \label{generic property}
Let $N$ be a smooth manifold and $M\subset N$ an open set. We say that a Riemannian metric $g \in \Met(N)$ separates the points of set $M$ if for all $p,q \in M, \: p\neq q$ there exists $\xi \in S_pN$ such that
$$
q \notin \gamma_{p,\xi}([-\tau_{exit}(p,-\xi),\tau_{exit}(p,\xi)]). 
$$
This means that there exists a geodesic segment that starts and ends at the boundary of $M$ and contains $p$, but does not contain  $q$.
\end{Definition}

We emphasize that there are easy examples for $N$, $M$ and $g$ such that $g$ does not separate the points of $M$. For instance let $M \subset S^2$ be a polar cap strictly larger than the half sphere. Consider any two antipodal points $p,q \in M$. Then the standard round metric does not separate the points $p$ and $q$.

Our next goal is to show that for every $(N,g)\in \mathcal{G},$ (where $\mathcal G$ is as in  \eqref{eq:admissible_metrics}) and  $M\subset N$  that is open, non-trapping in the sense of \eqref{eq_exit time is bounded} and has a smooth strictly convex boundary, the metric $g$ separates the points of $M$. This will be used in Section 3 to show that for two points $p,q \in M$ the complete scattering sets $R^E_{\p M}(p)$ and $R^E_{\p M}(q)$ coincide if and only if $p$ is $q$.
\begin{Lemma}
\label{Le:curve_xi(s)}
Let $(N,g)$ be a compact Riemannian manifold. Let $M\subset N$ be open, non-trapping in the sense of \eqref{eq_exit time is bounded} and have a smooth strictly convex boundary. Suppose that the metric $g$ does not separate the points of $M$. Then there exist $p, q\in M$, 
$p\neq q$, an interval $I\subset \R$, and a $C^\infty$-map $I \ni s \mapsto(\xi(s), \ell(s))\in S_pN\times \R,$ such that $\dot{\xi}(s)\neq 0$ and $q=\gamma_{p,\xi(s)}(\ell(s))$.
\end{Lemma}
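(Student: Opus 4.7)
I plan to translate the conclusion into the existence of a positive-dimensional smooth piece of the fiber of the geodesic evaluation map. Define
\[
F\colon S_pN\times \R\to N,\qquad F(\xi,t):=\gamma_{p,\xi}(t)=\exp_p(t\xi),
\]
and the level set
\[
H:=\{(\xi,t)\in S_pN\times\R \,:\, F(\xi,t)=q,\ -\tau_{exit}(p,-\xi)\leq t\leq\tau_{exit}(p,\xi)\}.
\]
By Lemma \ref{Le:exit_time_func_is_cont}, $H$ is closed in the compact slab $S_pN\times[-L,L]$ and hence compact. The non-separation hypothesis for the pair $(p,q)$ is precisely the statement that the projection $\pi_1\colon H\to S_pN$, $(\xi,t)\mapsto \xi$, is surjective, so $H$ is uncountable. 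Also $0\notin \pi_2(H)$ since $p\neq q$.

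Using the identification $T_{(\xi_0,t_0)}(S_pN\times\R)\cong \xi_0^{\perp}\oplus\R$, a chain-rule computation gives
\[
dF_{(\xi_0,t_0)}(\delta\xi,\delta t)=d\exp_p(t_0\xi_0)\bigl(t_0\,\delta\xi+(\delta t)\,\xi_0\bigr),
\]
and since the linear map $(\delta\xi,\delta t)\mapsto t_0\,\delta\xi+(\delta t)\,\xi_0$ is an isomorphism from $\xi_0^{\perp}\oplus\R$ onto $T_pN$ (using $t_0\neq 0$), the differential $dF_{(\xi_0,t_0)}$ is invertible iff $d\exp_p(t_0\xi_0)$ is, i.e., iff $q$ is not conjugate to $p$ along $\gamma_{p,\xi_0}$ at parameter $t_0$. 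At any such regular point the inverse function theorem makes $F$ a local diffeomorphism, so $H$ is locally the singleton $\{(\xi_0,t_0)\}$; by compactness $H$ contains only finitely many such isolated points. Since $\pi_1(H)=S_pN$ is uncountable, there must exist a non-isolated $(\xi_0,t_0)\in H$ at which $d\exp_p(t_0\xi_0)$ has a nontrivial kernel.

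To extract the smooth curve, I would pick a sequence $(\xi_k,t_k)\in H\setminus\{(\xi_0,t_0)\}$ converging to $(\xi_0,t_0)$ and, after normalization and passage to a subsequence, consider a limiting unit tangent direction $(u,\tau)\in \xi_0^{\perp}\oplus\R$. Because for each fixed $\xi$ the set $\gamma_{p,\xi}^{-1}(q)$ is discrete in $\R$, the case $u=0$ is excluded, so $u\neq 0$. Choosing a smooth path $\xi\colon I\to S_pN$ with $\xi(0)=\xi_0$ and $\dot\xi(0)=u$, the condition that the radial lift $t_0u+\tau\xi_0$ lies in $\ker d\exp_p(t_0\xi_0)$ allows the equation $F(\xi(s),\ell(s))=q$ to be solved to first order in $s$ with $\ell(0)=t_0$ and $\ell'(0)=\tau/t_0$. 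Combined with the smoothness of the geodesic flow and the abundance of nearby solutions guaranteed by $\pi_1(H)=S_pN$, this first-order solvability upgrades to a $C^\infty$ family $s\mapsto(\xi(s),\ell(s))\in H$ on some interval $I$ with $\dot\xi(s)\neq 0$ throughout.

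The principal obstacle lies in this last upgrade: passing from first-order solvability at a conjugate point to a genuine smooth curve in $H$. This requires verifying that the higher-order deviation of $\gamma_{p,\xi(s)}(t_0)$ from $q$ stays confined to the one-dimensional subspace $\R\,\dot\gamma_{p,\xi_0}(t_0)$ (so that it can be absorbed by an adjustment of $\ell(s)$), which in turn demands either a careful Jacobi-field argument along $\gamma_{p,\xi_0}$ or a curve-selection-type result for the closed preimage set $H$. The hypothesis $\pi_1(H)=S_pN$ is essential here, because it forces $H$ to be substantially larger than a generic conjugate-point singularity would permit, providing the structural fatness needed to close the argument.
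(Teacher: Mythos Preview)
Your proposal correctly identifies its own fatal gap: the ``upgrade'' from first-order solvability at a conjugate point to a genuine smooth curve in $H$ is not justified, and no amount of Jacobi-field bookkeeping will close it without additional structure. At a degenerate critical point of $\exp_p$ the level set $F^{-1}(q)$ can be quite singular, and the surjectivity of $\pi_1\colon H\to S_pN$, while suggestive, does not by itself force $H$ to contain a smooth arc through $(\xi_0,t_0)$. Two side issues: your claim that $H$ contains only finitely many regular (isolated) points is not quite right, since isolated points of a compact set need not be finite---what you actually need, and what holds, is that $H$ is uncountable and hence not discrete; and your argument that $u\neq 0$ is incomplete, since discreteness of $\gamma_{p,\xi}^{-1}(q)$ for each \emph{fixed} $\xi$ does not prevent a sequence in $H$ from approaching $(\xi_0,t_0)$ tangentially to the $t$-direction.

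The paper circumvents the conjugate-point difficulty entirely, and this is the key idea you are missing. Fix $\eta\in S_pN$ and list the finitely many hitting times $S(\eta)=\{s_1,\dots,s_K\}$. For each $s_k$, place a small hypersurface $\widetilde S_k$ through $q$ transversal to $\dot\gamma_{p,\eta}(s_k)$ and consider the signed distance $\varrho_k(t,\xi)$ from $\gamma_{p,\xi}(t)$ to $\widetilde S_k$. Since $|\partial_t\varrho_k(s_k,\eta)|=1$ \emph{regardless of conjugacy}, the implicit function theorem always applies and produces smooth functions $f_k\colon V_k\to\R$ with $\varrho_k(f_k(\xi),\xi)=0$ on a common neighborhood $U=\bigcap_k V_k$ of $\eta$. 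The non-separation hypothesis then forces $U=\bigcup_k U_k$ where $U_k:=\{\xi\in U:\exp_p(f_k(\xi)\xi)=q\}$ are relatively closed. Baire category yields some $U_k$ with nonempty interior, and any nonconstant smooth path $\xi(s)$ inside $U_k^{\mathrm{int}}$ together with $\ell(s):=f_k(\xi(s))$ gives the desired curve. The point is to decouple the transversal-crossing condition (always regular, hence smooth $f_k$ for free) from the ``actually hits $q$'' condition (merely closed), so that only a topological argument remains.
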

\begin{proof}
Since $g$ does not separate the points of $M$ there are $p,q\in M$, $p\not =q$,  such that for all $\xi\in S_pN$
we have
\begin{equation}
\label{eq:times_when_hit_to_q}
S(\xi)=\{s\in (-\tau_{exit}(p,-\xi),\tau_{exit}(p,\xi));\ \exp_p(s\,\xi)=q \}\not =\emptyset.
\end{equation}
We note that for all $\xi \in S_pN$ the set $S(\xi)$ is finite due to inequality \eqref{eq:exit_time_func_is_bdd}. 

Fix $\eta \in S_pN$ and enumerate $S(\eta)=\{s_1,\ldots,s_K\}$ for some $K\in \N$. Then 
$$
-\tau_{exit}(p,-\eta)<s_1<s_2<, \ldots, <s_K<\tau_{exit}(p,\eta).
$$ 
For each $s_k\in \{s_1,\ldots,s_K\}$ we choose a $n-1$-dimensional submanifold $S_k$ of $N$ such that
$$
\gamma_{p,\eta}(s_k)=q\in S_k \hbox { and } \dot{\gamma}_{p,\eta}(s_k)\perp T_qS_k.
$$
For instance we can define
$$
S_k:=\{\exp_q(tv)\in N; v \in S_qN, \: v \perp \dot{\gamma}_{p,\eta}(s_k), \:  t \in [0,inj(q))\},
$$
where $inj(q)$ is the injectivity radius at $q$. Choose $\epsilon>0$ and consider a $\epsilon$-neighborhood $W$ of $\gamma_{p,\eta}([-\tau_{exit}(p,-\eta),\tau_{exit}(p,\eta)])$. We will write $
\widetilde S_k$ for the component of $ S_k \cap W $ that contains $q$. If $\epsilon$ is small enough, there exists $\delta, \delta'>0$ such that
for any 
$$
t \in [-\tau_{exit}(p,-\eta),\tau_{exit}(p,\eta)] \setminus \bigg(\bigcup_{k=1}^K(s_k-\delta,s_k+\delta)\bigg)
$$ 
holds 
$$
dist_g(\widetilde S_k,\gamma_{p,\eta}(t))\geq 2\delta', \hbox{ for every } k \in \{1,\ldots,K\}.
$$
By the continuity of the exponential mapping, we can choose a smaller $\epsilon>0$ such that there exists an open neighborhood $V\subset S_pN$ of $\eta$ such that for any 
$$
t \in [-\tau_{exit}(p,-\eta),\tau_{exit}(p,\eta)] \setminus \bigg(\bigcup_{k=1}^K(s_k-\delta,s_k+\delta)\bigg) \hbox{ and } \xi\in V
$$ 
holds 
\begin{equation}
\label{eq:too_faraway_times}
dist_g(\widetilde S_k,\gamma_{p,\xi}(t))\geq \delta', \hbox{ for every } k \in \{1,\ldots,K\}.
\end{equation}

Next we define a signed distance function 
$$
\varrho_k(t,\xi)=\left\lbrace\begin{array}{c}
-dist_g(\gamma_{p,\xi}(t),\widetilde S_k), (t,\xi) \in (s_k-\delta,s_k]\times V
\\
dist_g(\gamma_{p,\xi}(t),\widetilde S_k), (t,\xi) \in [s_k,s_k+\delta)\times V.
\end{array} \right.
$$
See Figure \eqref{fig:generic}. 

Choosing a smaller $\delta$ and $V$, if needed, it follows that the function 
$\varrho_k: (s_k-\delta,s_k+\delta)\times V \to \R$ is smooth. Then we have $\varrho_k(s_k,\eta)=0$ and
$$
\bigg|\frac{\p }{\p t}\varrho_k(t,\eta)|_{t=s_k}\bigg|=\|\dot{\gamma}_{p,\eta}(s_k)\|^2=1.
$$
Therefore, by the Implicit function theorem there exists an open neighborhood $V_k\subset V$ of $\eta$ and a smooth function $f_k:V_k\to (s_k-\delta,s_k+\delta)$ that solves the equation
\begin{equation}
\label{eq:implicit_func_th}
\varrho_k(f_k(\xi),\xi)=0.
\end{equation}
Define an open neighborhood $U$ of $\eta$ by $U:= \bigcap_{k=1}^K V_k$ and sets $U_k=\{\xi\in U: \exp_p(f_k(\xi)\xi)=q\}$. Then sets $U_k$ are closed in the relative topology of $U$.
By \eqref{eq:times_when_hit_to_q}, \eqref{eq:too_faraway_times} and \eqref{eq:implicit_func_th} it must hold that
\begin{equation}
\label{eq:U_k_deplete_U}
U=\bigcup_{k=1}^K U_k.
\end{equation}

We claim that for some $k \in \{1, \ldots, K\}$ it holds that $U_k^{int}\neq \emptyset$. If this is not true, then the sets $P_j:= U\setminus U_j, \: j\in\{1,\ldots, K\}$ are all open and dense in the relative topology of $U$. Moreover by \eqref{eq:U_k_deplete_U} we have
$$
P:=\bigcap_{j=1}^KP_j=\emptyset.
$$
This is a contradiction since $U$ is a locally compact Hausdorff space and thus by the Baire category theorem, it should hold that the set $P$ is dense in the relative topology of $U$. Thus there exists $k \in \{1, \ldots, K\}$ for which it holds that $U_k^{int}\neq \emptyset$. Choose an open $U' \subset U_k$. In particular $U'$ is open in $S_pN$ and there exists $\epsilon'>0$ and a $C^\infty$-path $\xi(s)$, $s\in (-\epsilon',\epsilon')$, in $U'$ such that $\dot \xi(s)\not =0$. Denoting   $\ell(s)= f_k(\xi(s))$, we have
 $q=\gamma_{p,\xi(s)}(\ell(s))$ for  $s\in (-\epsilon,\epsilon)$. Therefore, the curve $s\mapsto (\xi(s),\ell(s))$ satisfies the claim of this Lemma.
\end{proof}

\begin{figure}[h]
 \begin{picture}(100,150)
  \put(-10,15){\includegraphics[width=130pt]{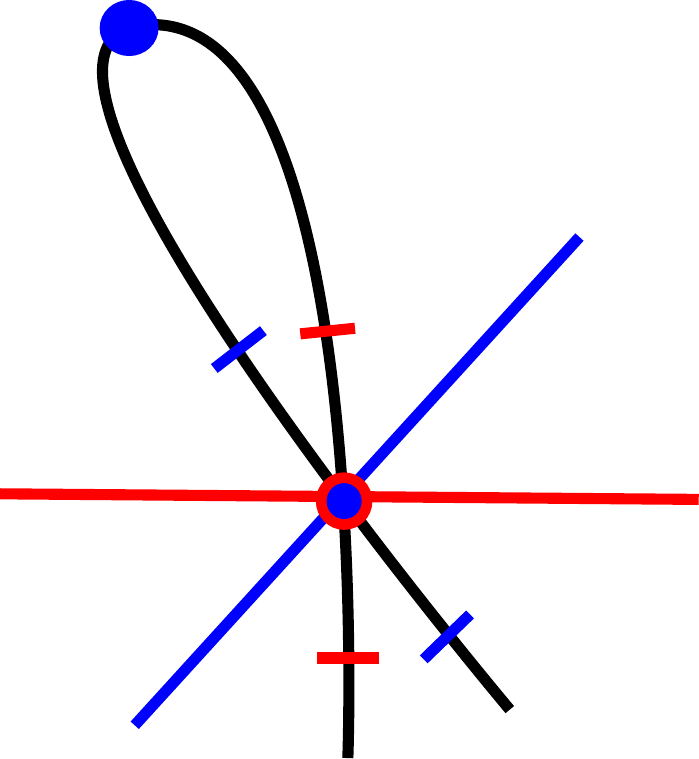}}
  \put(-25,60){$\widetilde S_1$}
  \put(0,150){$p$}
  \put(45,130){$\gamma_{p,\eta}$}
  \put(40,57){$q$}
  \put(7,10){$\widetilde S_2$}
   \end{picture}
\caption{Here is a visualization of the set up in the definition of the function $\varrho_k$ in Lemma \ref{Le:curve_xi(s)}. The blue dot is $p$ and the red\&blue dot is $q$. The black curve is the geodesic $\gamma_{p,\eta}$. The red line is the hypersurface $\widetilde S_1$ and the blue line is the hypersurface $\widetilde S_2$. The small blue and red segments indicate the intervals $(s_k-\delta, s_k+\delta)$ where the function $\varrho_k(\cdot,\eta)$ is defined.}
\label{fig:generic}
\end{figure}

\begin{Proposition}\label{generic A} 
Let $(N,g) \in \mathcal{G}$ and $M$ be as in Theorem \ref{th:main}. Then $g$ separates the points of $M$.
\end{Proposition}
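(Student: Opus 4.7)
The plan is to argue by contradiction. Suppose $g \in \mathcal{G}$ does not separate the points of $M$; invoke Lemma \ref{Le:curve_xi(s)} to produce $p,q\in M$ with $p\neq q$. A closer look at that lemma's proof shows that its Baire-category step, applied to the finite closed cover $U=\bigcup_k U_k$, in fact yields a non-empty open set $U' \subset S_pN$ together with a smooth function $f : U' \to \R$ (a restriction of one of the $f_k$'s) such that $\exp_p(f(\xi)\xi) = q$ for every $\xi \in U'$. Distinct $\xi \in U'$ give distinct initial directions at $p$, and hence distinct geodesics from $p$ to $q$ of length $|f(\xi)|$. Thus one obtains an $(n-1)$-parameter family of geodesics joining $p$ and $q$. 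I then aim to extract more than $2n+2$ members of this family sharing a common length, contradicting $I(g)\leq 2n+2$.

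Next I would proceed by a dichotomy on the behavior of $f$. If $f$ is locally constant on some non-empty open $V \subset U'$, then a continuum of vectors $\xi \in V$ give distinct geodesics from $p$ to $q$ of a common length, immediately contradicting $I(g)<\infty$. Otherwise $f$ is nowhere locally constant, so $f(U')$ is a non-degenerate interval. Applying Sard's theorem to $f:U'\to \R$, and using $\dim U' = n-1$, a generic $L \in f(U')$ is a regular value, and $f^{-1}(L)$ is then a smooth submanifold of $U'$ of dimension $n-2$. For $n\geq 3$ this fiber is uncountable, yielding uncountably many geodesics from $p$ to $q$ of common length $|L|$, which is the required contradiction.

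The main obstacle is the planar case $n=2$, where $\dim f^{-1}(L) = 0$ and Sard alone yields only discrete, possibly finite, fibers; one cannot then directly exceed the threshold $2n+2=6$. My plan here is to globalize the Baire construction: non-separation guarantees that the hit-time sets $S(\xi)$ from the proof of Lemma \ref{Le:curve_xi(s)} are non-empty for every $\xi \in S_pN = S^1$, so the local time-functions $f^\eta_k$ cover all of $S^1$ by open arcs, with values uniformly bounded by Lemma \ref{Le:exit_time_func_is_cont}. Combining this with a Morse-type count of how often each length value is attained across the patched $f^\eta_k$, one forces more than six directions in $S^1$ yielding geodesics from $p$ to $q$ of a common length, contradicting $I(g)\leq 6$ and completing the argument in dimension two as well.
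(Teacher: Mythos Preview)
Your argument for $n\geq 3$ is sound, but the two-dimensional case has a genuine gap: the ``Morse-type count across the patched $f_k^\eta$'' is not an argument, only a hope. In dimension two the local time-functions are maps from arcs of $S^1$ to $\R$; nothing in the setup prevents each value in the range from being hit only a bounded number of times (e.g.\ twice) across all branches, so you cannot force more than $2n+2=6$ coincidences this way. The functions $f_k$ need not glue to a single smooth map on $S^1$, the number $K$ of branches can vary with the basepoint $\eta$, and there is no global topological obstruction that would push the count past six.

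The paper sidesteps this entirely with a first-variation computation that makes your dichotomy collapse to its trivial branch in every dimension. Take any smooth curve $s\mapsto \xi(s)$ in your open set $U'$ with $\dot\xi(s)\neq 0$ and set $\ell(s)=f(\xi(s))$. Differentiating $\exp_p(\ell(s)\xi(s))=q$ gives
\[
0=\dot\gamma_{p,\xi(s)}(\ell(s))\,\ell'(s)\;+\;\ell(s)\,D\exp_p\big|_{\ell(s)\xi(s)}\dot\xi(s).
\]
Since $\|\xi(s)\|\equiv 1$ forces $\xi(s)\perp\dot\xi(s)$, the Gauss Lemma makes the two summands orthogonal; hence both vanish, so $\ell'(s)\equiv 0$. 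Thus $f$ is constant along every curve in $U'$, i.e.\ $f$ is locally constant, and you get a continuum of geodesics from $p$ to $q$ of a fixed length, contradicting $I(g)\leq 2n+2$ immediately and uniformly in $n\geq 2$. This one-line use of Gauss replaces your Sard/dimension-count machinery and closes the $n=2$ gap.
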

\begin{proof}
We prove the proposition by contradiction. Given $g\in \mathcal{G}$, assume there are $p,q\in M, p\neq q$ such that for all $\xi\in S_pN$, 
$$
q \in \gamma_{p,\xi}([-\tau_{exit}(p,-\xi),\tau_{exit}(p,\xi)]). 
$$
In particular (see Lemma \ref{Le:curve_xi(s)}), there exists an open interval $(-\epsilon, \epsilon)$, such that for $s\in (-\epsilon,\epsilon)$, $(\xi(s),\ell(s))$ is a $C^1$-path on $S_pN\times \R$ such that $\dot{\xi}(s)\neq 0$ and $q=\gamma_{p,\xi(s)}(\ell(s))$. This implies 
\begin{equation}\label{variation}
\begin{array}{lll}
0 & =&\frac{\p}{\p s}(\gamma_{p,\xi(s)}(\ell(s)))\\
& =&\dot{\gamma}_{p,\xi(s)}(\ell(s))\cdot\frac{d}{ds}\ell(s)+\ell(s)\cdot D\,\mbox{exp}_p|_{\ell(s)\xi(s)}\dot{\xi}(s)\\
& =:&T_1+T_2.
\end{array}
\end{equation}
Since $\|\xi(s)\|=1$, it implies $\xi(s)\perp \dot{\xi}(s)$. By Gauss Lemma, we have $\dot{\gamma}_{p,\xi(s)}(\ell(s))\perp D\,\mbox{exp}_p|_{\ell(s)\xi(s)}\dot{\xi}(s)$, thus $T_1\perp T_2$. Applying equation  \eqref{variation}, we obtain $T_1=T_2=0$, therefore, $\frac{d}{ds}(\ell(s))=0$. This is equivalent to saying that $\ell(s)\equiv const,$ for $s\in (-\epsilon,\epsilon)$. This implies $I(g)=\infty$. By equation \eqref{eq:KPP_property}, we arrive a contradiction.
\end{proof}

\section{The reconstruction of the topology} 
We define a mapping 
\begin{equation*}
R^E_{\p M}: \overline{M}\rightarrow 2^{\p SM}, \: p \mapsto R^E_{\p M}(p).
\end{equation*}
The aim of this section is to show that the map $R^{E}_{\p M}$ is a homeomorphism, with respect to some suitable subset of $2^{\p SM}$ and topology of this subset. Recall that $(\p M, g|_{\p M})$ is known, by the Lemma \ref{Le:extension_from_tangential_data_to_full_data}. Therefore we can talk about topological properties of $2^{\p SM}$. Thus the goal is to reconstruct a homeomorphic copy of $(\overline{M}, g)$. We start with showing that the map $R^E_{\p M}$ is one-to-one, if the generic property \ref{generic property} holds.
\begin{Lemma} Suppose that $(N,g)$ and $M$ are as in the Theorem \ref{th:main}.
Then the mapping $R^E_{\p M}:\overline{M}\rightarrow 2^{\p SM}$ is one-to-one.
\label{R_F is one-to-one}
\end{Lemma}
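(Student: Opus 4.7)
The plan is to show that whenever $p\neq q$ in $\overline M$, some element of $R^E_{\p M}(p)$ fails to lie in $R^E_{\p M}(q)$. The key observation, afforded by Corollary \ref{Co:new_def_for_data}, is that $R^E_{\p M}(p)$ consists exactly of the exit data $(\gamma_{p,\xi}(\tau_{exit}(p,\xi)),\dot\gamma_{p,\xi}(\tau_{exit}(p,\xi)))$ with $\xi\in S_pN$. Since such an exit vector $(y,\eta)\in\p_-SM$ uniquely determines a geodesic in $N$, namely $\gamma_{y,-\eta}$ traversed backward, any shared element $(y,\eta)\in R^E_{\p M}(p)\cap R^E_{\p M}(q)$ forces both $p$ and $q$ to lie on this common trajectory inside $\overline M$. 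The argument naturally splits according to whether or not at least one of $p,q$ lies on the boundary.

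Case 1: Suppose (without loss of generality) that $p\in\p M$. Pick any unit vector $\eta\in T_p\p M$; then $\tau_{exit}(p,\eta)=0$ by Lemma \ref{LE:2_fundamental_form}, so Corollary \ref{Co:new_def_for_data} gives $(p,\eta)\in R^E_{\p M}(p)$. Assume for contradiction that $(p,\eta)\in R^E_{\p M}(q)$ as well; then there exists $\zeta\in S_qN$ with $\gamma_{q,\zeta}(s)=p$, $\dot\gamma_{q,\zeta}(s)=\eta$, where $s=\tau_{exit}(q,\zeta)$. The case $s=0$ immediately yields $q=p$, a contradiction. In the case $s>0$, the reparametrization $\gamma_{p,-\eta}(\tau)=\gamma_{q,\zeta}(s-\tau)$ lies in $\overline M$ for every $\tau\in[0,s]$, contradicting the first part of Lemma \ref{LE:2_fundamental_form}, which forces $\gamma_{p,-\eta}(\tau)\notin\overline M$ for all sufficiently small $\tau>0$ (since $-\eta$ is tangent to $\p M$).

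Case 2: Both $p,q\in M$. Since $(N,g)\in\mathcal G$, Proposition \ref{generic A} produces $\xi\in S_pN$ with $q\notin\gamma_{p,\xi}([-t_-,t_+])$, where $t_\pm:=\tau_{exit}(p,\pm\xi)$. Setting $(y,\eta):=(\gamma_{p,\xi}(t_+),\dot\gamma_{p,\xi}(t_+))\in R^E_{\p M}(p)$, assume for contradiction that $(y,\eta)\in R^E_{\p M}(q)$. Then there exist $\zeta\in S_qN$ and $s=\tau_{exit}(q,\zeta)$ with $\gamma_{q,\zeta}(s)=y$, $\dot\gamma_{q,\zeta}(s)=\eta$, and uniqueness of geodesics yields $\gamma_{q,\zeta}(\tau)=\gamma_{p,\xi}(\tau-s+t_+)$ for all $\tau$ where either side is defined, whence $q=\gamma_{p,\xi}(t_+-s)$. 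It remains to verify $t_+-s\in[-t_-,t_+]$, which then contradicts the choice of $\xi$. I anticipate this containment to be the main technical point: one argues that the connected curve $\gamma_{q,\zeta}([0,s])\subset\overline M$ cannot extend past the parameter value $-t_-$ on $\gamma_{p,\xi}$, because the transversal exit of $\gamma_{p,\xi}$ through $\p M$ at $-t_-$ (guaranteed by the second part of Lemma \ref{LE:2_fundamental_form} together with strict convexity) sends the geodesic into the open exterior $N\setminus\overline M$ immediately after $-t_-$, violating $\gamma_{q,\zeta}([0,s])\subset\overline M$.
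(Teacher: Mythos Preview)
Your proof is correct and follows essentially the same route as the paper: the same case split (at least one point on $\partial M$ versus both interior), the same use of tangent directions and Lemma~\ref{LE:2_fundamental_form} in the boundary case, and the same appeal to Proposition~\ref{generic A} in the interior case. The only cosmetic difference is that the paper argues by contradiction (assume $R^E_{\partial M}(p)=R^E_{\partial M}(q)$ and deduce that \emph{every} geodesic through $p$ meets $q$), whereas you argue directly by exhibiting one exit vector in $R^E_{\partial M}(p)\setminus R^E_{\partial M}(q)$; and in Case~2 you spell out the interval containment $t_+-s\in[-t_-,t_+]$ via the transversal exit at $-t_-$, which the paper leaves implicit.
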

\begin{proof}
We will prove the claim by contradiction. Therefore, we divide the proof into two separate cases.

Suppose first that there is $p\in \p M, \: q\in \overline{M}$ such that $R^E_{\p M}(p)=R^E_{\p M}(q)$, that is
\begin{eqnarray}
&&\{(\gamma_{p,\xi}(\tau_{exit}(p,\xi)),\dot{\gamma}_{p,\xi}(\tau_{exit}(p,\xi))); \: \xi \in S_pN\} \nonumber
\\
&=&\{(\gamma_{q,\eta}(\tau_{exit}(q,\eta)),\dot{\gamma}_{q,\eta}(\tau_{exit}(q,\eta))); \: \eta \in S_qN\} \subset \p_-SM. \nonumber
\end{eqnarray} 
By the definition of $R^E_{\p M}(p)$ it holds that 
$$
S_p \p M\subset R^E_{\p M}(p)=R^E_{\p M}(q).
$$
Therefore, we deduce that any tangential geodesic starting at $p$ hits $q$ before exiting $\overline{M}$. Since $\p M$ is strictly convex, this is true only if $p=q$.

Suppose then that there exist $p \in M,\: q \in \overline{M}, \:  p\neq q$ such that $R^E_{\p M}(p)=R^E_{\p M}(q)$. By the first part we may assume that $q \in M$. Let $\xi \in S_pN$. Then for $\pm \xi$ we have
$$
(\gamma_{p,\pm \xi}((\tau_{exit}(p,\pm\xi)),\dot \gamma_{p,\pm\xi}((\tau_{exit}(p,\pm\xi)))\in R^E_{\p M}(q).
$$
Therefore, it holds that
\begin{equation}
\label{eq:q_is_on_every_geo}
q \in \gamma_{p,\xi}((-\tau_{exit}(p,-\xi),\tau_{exit}(p,\xi))).
\end{equation}
Since $\xi \in S_pN$ was arbitrary, the equation \eqref{eq:q_is_on_every_geo} is valid for every $\xi \in S_pN$ and we have proved that the metric $q$ does not separate the points $p$ and $q$. This is a contradiction with Proposition \ref{generic A} and therefore, $p=q$.
\end{proof}

To reconstruct the topology of $\overline{M}$ from $R^E_{\p M}(\overline{M})$, we need to first give a topological structure to the latter. Notice that on the unit tangent bundle $S \overline{M}$ there is a natural metric induced by the underlying metric $g$, namely the Sasaki metric. We denote the Sasaki metric associated with $g$ by $g_S$. Then on the power set $2^{S \overline{M}}$, we assign the Hausdorff distance, i.e., given $A, B\in 2^{S\overline{M}}$
$$d_H(A,B):=\max\{\sup_{a\in A}\inf_{b\in B}d_{g_S}(a,b), \sup_{b\in B}\inf_{a\in A}d_{g_S}(a,b)\}.$$

However, in general the Hausdorff distance on $2^{S \overline{M}}$ needs not to be metrizable. If we consider the subset $\mathcal{C}(S \overline{M}):=\{\mbox{closed subsets of }\, \S \overline{M}\}\subset 2^{S \overline{M}}$, then $(\mathcal{C}(S \overline{M}), d_H)$ is a compact metric space by Blaschke selection theorem (see for instance \cite{ambrosio2004topics}, Theorem 4.4.15). The topology on $\mathcal{C}(S \overline{M})$ thus is induced by the Hausdorff metric $d_H$. 
We turn to consider a subspace $\mathcal{C}(\p SM):=\{\mbox{closed subsets of }\, \p S M\}\subset 2^{\p SM}$ of $\mathcal{C}(S\overline{M})$. Since the boundary $\p M$ is compact, $\mathcal{C}(\p S M)$ is a compact metric space. 

\begin{Lemma}
For any $q \in \overline{M}$, the set $R^E_{\p M}(q)\subset \p S M$ is compact.
\label{Le:measurement set is closed}
\end{Lemma}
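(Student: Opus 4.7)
The plan is to realize $R^E_{\p M}(q)$ as the continuous image of a compact sphere and invoke the elementary fact that continuous images of compact sets are compact. By Corollary \ref{Co:new_def_for_data} we have the explicit parametrization
$$
R^E_{\p M}(q)=\{(\gamma_{q,\xi}(\tau_{exit}(q,\xi)),\dot\gamma_{q,\xi}(\tau_{exit}(q,\xi)));\,\xi\in S_qN\},
$$
so the natural approach is to define
$$
F_q\colon S_qN\to \p SM,\qquad F_q(\xi):=\bigl(\gamma_{q,\xi}(\tau_{exit}(q,\xi)),\,\dot\gamma_{q,\xi}(\tau_{exit}(q,\xi))\bigr),
$$
and prove that $F_q$ is continuous.

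First I would note that $S_qN$ is the unit sphere in the inner product space $(T_qN,g_q)$, hence homeomorphic to $S^{n-1}$ and in particular compact. For continuity of $F_q$, I would factor it as the composition of two continuous pieces: the map $\xi\mapsto (q,\xi,\tau_{exit}(q,\xi))$ from $S_qN$ to $S\overline M\times \R_+$, continuous by Lemma \ref{Le:exit_time_func_is_cont} (which guarantees continuity of $\tau_{exit}$ on all of $S\overline M$, including the boundary part), followed by the geodesic flow map $(t,p,\xi)\mapsto (\gamma_{p,\xi}(t),\dot\gamma_{p,\xi}(t))$, which is smooth on $\R\times SN$ since $(N,g)$ is a closed smooth Riemannian manifold.

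From here the conclusion is immediate: $R^E_{\p M}(q)=F_q(S_qN)$ is the continuous image of a compact space inside the Hausdorff space $\p SM$ (equipped with the topology induced by the Sasaki metric $g_S$), hence compact. In particular it is closed in $\p SM$, which is the statement needed to place $R^E_{\p M}(q)$ inside $\mathcal C(\p SM)$ for the subsequent topological reconstruction.

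I do not anticipate any genuine obstacle; all the work is absorbed in Lemma \ref{Le:exit_time_func_is_cont}, where strict convexity of $\p M$ and non-trapping are used to extend continuity of $\tau_{exit}$ up to $S\p M$. The only minor verification is that the image of $F_q$ indeed lies in $\p SM$ rather than just in $S\overline M$, but this is built into the very definition of $\tau_{exit}$: the geodesic $\gamma_{q,\xi}$ reaches $\p M$ at time $\tau_{exit}(q,\xi)$, so $F_q(\xi)\in \p SM$.
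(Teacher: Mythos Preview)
Your proposal is correct and follows essentially the same approach as the paper: define the map $E_q\colon S_qN\to \p_-SM$, $E_q(\xi)=(\gamma_{q,\xi}(\tau_{exit}(q,\xi)),\dot\gamma_{q,\xi}(\tau_{exit}(q,\xi)))$, observe that $S_qN$ is compact and $E_q$ is continuous, and conclude via Corollary~\ref{Co:new_def_for_data} that $R^E_{\p M}(q)=E_q(S_qN)$ is compact. Your write-up is slightly more explicit about the continuity of $E_q$ (factoring through Lemma~\ref{Le:exit_time_func_is_cont} and the geodesic flow), but the argument is the same.
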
 
\begin{proof}
Consider a continuous mapping $E_q:S_qN \rightarrow \p_-S M$ given by
$$
E_q(\xi)=(\gamma_{q,\xi}(\tau_{exit}(q,\xi)),\dot \gamma_{q,\xi}(\tau_{exit}(q,\xi))).
$$
Since $S_qN$ is compact, and $E_q$ is continuous it holds by Corollary \ref{Co:new_def_for_data} that
$$
R^E_{\p M}(q)=E_q(S_qN) \hbox{ is compact}.
$$
\end{proof}

By Lemma \ref{Le:measurement set is closed} it holds that $R^E_{\p M}(\overline{M}) \subset \mathcal{C}(\p S M)$. From now on we consider the mapping 
\begin{equation}
R^E_{\p M}:\overline M \rightarrow  \mathcal{C}(\p S M), \: p \mapsto R^E_{\p M}(p).
\label{Mapping R_F}
\end{equation}

\begin{Definition}
Let $(X,d)$ be a complete metric space and $\mathcal C(X)$ be the collection of all closed subsets of $(X,d)$. We say that a sequence $(A_j)_{j=1}^\infty \subset \mathcal C(X)$ converges to $A \in \mathcal C(X)$ in the Kuratowski topology, if the following two conditions hold:
\begin{enumerate}
\item[(K1)] Given any sequence $(x_j)_{j=1}^\infty$, $x_j\in A_j$ with a convergent subsequence $x_{j_k}\to x$ as $k \to \infty$, then the limit point $x$ is contained in $A$.
\item[(K2)] Given any $x \in A$, there exists a sequence $(x_j)_{j=1}^\infty$, $x_j \in A_j$ that converges to $x$.
\end{enumerate}
\end{Definition}
\begin{Lemma}
\label{overline R_F is continuous}
The mapping $R^E_{\p M}:\overline{M} \rightarrow \mathcal{C}(\p S M)$ is continuous. 
\end{Lemma}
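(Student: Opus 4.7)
The plan is to verify that $R^E_{\p M}$ is sequentially continuous by checking the two Kuratowski conditions (K1) and (K2) for an arbitrary sequence $p_j \to p$ in $\overline M$. Since $\p SM$ is compact, Kuratowski convergence of closed subsets is equivalent to convergence in the Hausdorff metric $d_H$, so this will suffice to establish continuity with respect to $d_H$.

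The key preliminary observation will be continuity of the \emph{exit map}
$$
E: S\overline M \to \p_- SM, \quad E(p,\xi) = \big(\gamma_{p,\xi}(\tau_{exit}(p,\xi)),\, \dot\gamma_{p,\xi}(\tau_{exit}(p,\xi))\big),
$$
obtained by composing the smooth geodesic flow with the continuous first exit time $\tau_{exit}$ provided by Lemma \ref{Le:exit_time_func_is_cont}. By Corollary \ref{Co:new_def_for_data}, $R^E_{\p M}(q) = E(\{q\}\times S_qN)$ for every $q\in\overline M$, so the question reduces to understanding how these fiberwise images of $E$ vary with the basepoint.

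To verify (K1), I will take $(q_j,\eta_j) = E(p_j,\xi_j)\in R^E_{\p M}(p_j)$ with a convergent subsequence $(q_{j_k}, \eta_{j_k})\to (q,\eta)$. Compactness of $S\overline M$ lets me pass to a further subsequence with $\xi_{j_k}\to \xi$ in $TN$; since $p_{j_k}\to p$ and $\|\xi_{j_k}\|_g=1$, I get $\xi\in S_pN$, and continuity of $E$ then gives $(q,\eta) = E(p,\xi)\in R^E_{\p M}(p)$. For (K2), given $(q,\eta) = E(p,\xi)\in R^E_{\p M}(p)$ with $\xi\in S_pN$, I need a sequence $\xi_j\in S_{p_j}N$ with $\xi_j\to \xi$ in $TN$. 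I would construct such a lift by parallel-transporting $\xi$ along the unique short distance-minimizing geodesic from $p$ to $p_j$, which exists for all large $j$ by Lemma \ref{Le:minimizing_geo_from_boundary}; metric compatibility of the Levi-Civita connection ensures $\xi_j\in S_{p_j}N$, and smooth dependence on endpoints gives $\xi_j\to \xi$. Continuity of $E$ then yields $E(p_j,\xi_j)\to E(p,\xi) = (q,\eta)$, which is (K2).

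The only potential subtlety is that $\tau_{exit}$ fails to be smooth on the glancing set $S\p M$; however, only continuity of $\tau_{exit}$ is needed for both (K1) and (K2), and Lemma \ref{Le:exit_time_func_is_cont} supplies this across all of $S\overline M$, so no genuine obstruction arises in the argument.
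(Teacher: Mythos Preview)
Your proof is correct and follows essentially the same approach as the paper: reduce Hausdorff continuity to Kuratowski convergence via compactness of $\p SM$, then verify (K1) using compactness of $S\overline M$ to extract a convergent subsequence of directions, and (K2) by lifting $\xi$ to nearby fibers and invoking continuity of the geodesic flow and $\tau_{exit}$. The only cosmetic differences are that you package the exit map $E$ explicitly and specify parallel transport for the lift in (K2), whereas the paper simply asserts that such $\xi_j\to\xi$ can be chosen.
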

\begin{proof}
Let $q \in \overline{M}$ and $q_j \in \overline{M}, \: j\in \N$ be a sequence that converges to $q$. As $\p S M$ is compact it holds that the Kuratowski convergence and the Hausdorff convergence are equivalent (see e.g. \cite{ambrosio2004topics} Proposition 4.4.14). Thus it suffices to show that $R^E_{\p M}(q_j)$ converges to $R^E_{\p M}(q)$ in the space $\mathcal{C}(\p S M)$ in the sense of Kuratowski.

First we prove (K1). Let $(p_j,\eta_j) \in R^E_{\p M}(q_j)$. Changing in to subsequences, if necessary, we assume that $(p_j,\eta_j) \to (p,\eta) \in \p S M$ as $j \to \infty$. Let $\xi_j\in S_{q_j}N$ be such that for each $j\in \N$,
$$
\gamma_{q_j,\xi_j}(\tau_{exit}(q_j,\xi_j))=p_j \hbox{ and } \dot \gamma_{q_j,\xi_j}(\tau_{exit}(q_j,\xi_j))=\eta_j.
$$
As the first exit time function is continuous and $S \overline{M}$ is compact we may  with out loss of generality assume that $(q_j,\xi_j) \to (q',\xi) \in S_{q'}N, \: q' \in \overline{M}$ and  $\tau_{exit}(q_j,\xi_j) \to \tau_{exit}(q',\xi)$. Since $q_j \to q$, it holds that $q'=q$. 
By the continuity of the exponential mapping and the first exit time function the following holds
$$
(p,\eta)=\lim_{j \to \infty}(p_j,\eta_j)=\lim_{j \to \infty}(\gamma_{q_j,\xi_j}(\tau_{exit}(q_j,\xi_j)),\dot \gamma_{q_j,\xi_j}(\tau_{exit}(q_j,\xi_j)))
$$
$$
=(\gamma_{q,\xi}(\tau_{exit}(q,\xi)), \dot \gamma_{q,\xi}(\tau_{exit}(q,\xi))).
$$
Thus $(p,\eta)\in R^E_{\p M}(q)$. This proves (K1).

Next we prove (K2). Let $(p,\eta) \in R^E_{\p M}(q)$. Let $\xi\in S_{q}N$  be such that
$$
\gamma_{q,\xi}(\tau_{exit}(q,\xi))=p \hbox{ and } \dot \gamma_{q,\xi}(\tau_{exit}(q,\xi))=\eta.
$$
Since  $q_j \to q$ as $j\to \infty$ we can choose  $\xi_j \in S_{q_j}N,\: j \in \N$ such that $\xi_j \to \xi$ as $j \to \infty$. 
Denote
$$
\gamma_{q_j,\xi_j}(\tau_{exit}(q_j,\xi_j))=p_j \hbox{ and } \dot \gamma_{q_j,\xi_j}(\tau_{exit}(q_j,\xi_j))=\eta_j,
$$
since the exponential map and the first exit time function are continuous, then 
$$
(p_j,\eta_j) \in R^E_{\p M}(q_j) \hbox{ and } (p_j,\eta_j) \to (p,\eta).
$$  
This proves (K2).

We conclude that $R^E_{\p M}(q_j)$ converges to $R^E_{\p M}(q)$ in the Kuratowski topology. 
\end{proof}

\begin{Proposition}
\label{pr:R_F is homeo}
The mapping $R^E_{\p M}:\overline{M} \rightarrow R^E_{\p M}(\overline{M}) \subset \mathcal{C}(\p S M)$ is a homeomorphism.
\end{Proposition}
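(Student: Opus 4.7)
The plan is to invoke the classical topology fact that a continuous bijection from a compact space to a Hausdorff space is automatically a homeomorphism onto its image. All the ingredients are already at hand, so the proof should be essentially a one-paragraph verification.

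First, I would assemble the pieces. By Lemma \ref{R_F is one-to-one}, the map $R^E_{\p M}:\overline{M}\to \mathcal{C}(\p SM)$ is injective, hence it is a bijection onto its image $R^E_{\p M}(\overline{M})$. By Lemma \ref{overline R_F is continuous}, this map is continuous. The domain $\overline{M}$ is compact, being a closed subset of the compact manifold $(N,g)$. The codomain $\mathcal{C}(\p SM)$ carries the Hausdorff metric $d_H$ induced by the Sasaki metric on $\p SM$, and is therefore a (compact) metric space, in particular Hausdorff. The image $R^E_{\p M}(\overline{M})$, equipped with the subspace topology, is then also Hausdorff.

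Next, I would show the inverse is continuous. Let $F\subset \overline{M}$ be closed. Since $\overline{M}$ is compact, $F$ is compact. The continuous image of a compact set is compact, so $R^E_{\p M}(F)\subset \mathcal{C}(\p SM)$ is compact, and hence closed in the Hausdorff space $\mathcal{C}(\p SM)$ (and in its subspace $R^E_{\p M}(\overline{M})$). This shows that $R^E_{\p M}$ is a closed map, which together with bijectivity onto its image implies that the inverse mapping
$$(R^E_{\p M})^{-1}: R^E_{\p M}(\overline{M})\to \overline{M}$$
is continuous. Combining this with Lemma \ref{overline R_F is continuous} yields that $R^E_{\p M}$ is a homeomorphism onto its image.

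I do not foresee a genuine obstacle here, since Lemmas \ref{R_F is one-to-one} and \ref{overline R_F is continuous} have already done the substantive work (injectivity rests on the generic separation property in Proposition \ref{generic A}, and continuity rests on the continuity of $\tau_{exit}$ and the exponential map). The only minor care needed is to note that compactness of $\overline{M}$ is used crucially, and that working inside $\mathcal{C}(\p SM)$ (rather than the full power set $2^{\p SM}$) gives a legitimate Hausdorff target; both are already set up in the text preceding the proposition.
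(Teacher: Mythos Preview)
Your proposal is correct and matches the paper's own proof essentially verbatim: the paper simply notes that $R^E_{\p M}$ is a continuous bijection from the compact space $\overline{M}$ onto its image in the metric (hence Hausdorff) space $\mathcal{C}(\p SM)$, and concludes it is a homeomorphism. Your additional closed-map justification is a fine elaboration of exactly this standard fact.
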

\begin{proof}
Note the $R^E_{\p M}:\overline{M} \rightarrow R^E_{\p M}(\overline{M}) \subset \mathcal{C}(\p S M)$ is continuous, one-to-one and onto, and $\overline M$ is compact and $\mathcal C(\p SM)$ is a metric space and thus a topological Hausdorff space. These yield that $R^E_{\p M}:\overline{M} \rightarrow R^E_{\p M}(\overline{M})$ is a homeomorphism.
\end{proof}

By the Proposition \ref{pr:R_F is homeo} the manifold topology of the data set $R^E_{\p M}(\overline{M})$ is determined. Thus the topological manifold $R^E_{\p M}(\overline{M})$  is a homeomorphic copy of $\overline{M}$. The rest of this section is devoted to constructing a map from $\overline{M}_1$ onto $\overline{M}_2$ that we later show to be a Riemannian isometry.
\color{black}

\medskip

Define a map 
\begin{equation}
\label{eq:hat_Phi}
\widehat{D\Phi}:\mathcal{C}(\p S M_1) \to \mathcal{C}(\p S M_2), \: F \mapsto D\Phi(F). 
\end{equation}

\begin{Lemma}
\label{Le:lift_of_Dphi_is_homeo}
The map $\widehat{D\Phi}:\mathcal{C}(\p S M_1) \to \mathcal{C}(\p S M_2)$ is a homeomorphism.
\end{Lemma}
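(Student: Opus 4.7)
The plan is to show first that $D\Phi$ restricts to a diffeomorphism $\p SM_1\to \p SM_2$ that preserves the Sasaki metric on the boundary of the sphere bundle, and then to invoke the general fact that a homeomorphism between compact metric spaces lifts to a homeomorphism of their hyperspaces of closed subsets equipped with the Hausdorff distance.

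\emph{Step 1: $D\Phi$ restricts to a diffeomorphism $\p SM_1\to\p SM_2$.} By the explicit formula \eqref{eq:big_phi}, in boundary normal coordinates $\Phi(z,s)=(\phi(z),s)$. For a boundary point $p\in\p M_1$ and a vector $\eta+a\nu_1(p)\in T_pN_1$ with $\eta\in T_p\p M_1$ and $\nu_1$ the outward unit normal, one reads off
\[
D\Phi\bigl(\eta+a\nu_1(p)\bigr)=D\phi(\eta)+a\,\nu_2(\phi(p)).
\]
Since in boundary normal coordinates the metric splits as $h_s+ds^2$ at $s=0$, and $\phi$ is an isometry of the boundary metrics by \eqref{eq:collar_neib} (equivalently, by \eqref{eq:Phi_preserves_boundary_inner_product}), the above map is a linear isometry from $T_pN_1$ onto $T_{\phi(p)}N_2$. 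Consequently $D\Phi$ sends unit vectors at boundary points to unit vectors, and hence restricts to a smooth bijection $\p SM_1\to\p SM_2$ whose inverse is the analogous restriction of $D\Phi^{-1}$. Being a smooth bijection between boundaryless compact manifolds of the same dimension with smooth inverse, it is in fact a diffeomorphism.

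\emph{Step 2: The induced map on hyperspaces is a homeomorphism.} For $F\in\mathcal{C}(\p SM_1)$, the set $D\Phi(F)$ is the continuous image of a closed (hence compact) subset of the compact space $\p SM_1$, so it is compact, hence closed in $\p SM_2$. Thus \eqref{eq:hat_Phi} is well defined. By symmetry, so is $\widehat{D\Phi^{-1}}:\mathcal{C}(\p SM_2)\to\mathcal{C}(\p SM_1)$, and clearly $\widehat{D\Phi^{-1}}\circ\widehat{D\Phi}=\mathrm{Id}$ and $\widehat{D\Phi}\circ\widehat{D\Phi^{-1}}=\mathrm{Id}$, so $\widehat{D\Phi}$ is a bijection.

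It remains to check continuity of $\widehat{D\Phi}$ with respect to the Hausdorff distance (continuity of the inverse follows by symmetry). Since $\p SM_1$ is compact and $D\Phi:\p SM_1\to\p SM_2$ is continuous, it is uniformly continuous with respect to the Sasaki metrics: for every $\varepsilon>0$ there is $\delta>0$ with $d_{g_{2,S}}(D\Phi(x),D\Phi(y))<\varepsilon$ whenever $d_{g_{1,S}}(x,y)<\delta$. Given $F,G\in\mathcal{C}(\p SM_1)$ with $d_H(F,G)<\delta$, for any $x\in F$ we find $y\in G$ with $d_{g_{1,S}}(x,y)<\delta$, so $d_{g_{2,S}}(D\Phi(x),D\Phi(y))<\varepsilon$, giving $\sup_{u\in D\Phi(F)}\inf_{v\in D\Phi(G)}d_{g_{2,S}}(u,v)\le\varepsilon$. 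The symmetric inequality yields $d_H(D\Phi(F),D\Phi(G))\le\varepsilon$, which is the required continuity.

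The main technical point is Step 1, namely verifying that $D\Phi$ carries unit boundary vectors to unit boundary vectors; once this is settled, Step 2 is a routine uniform continuity argument and presents no real obstacle.
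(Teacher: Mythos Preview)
Your proof is correct and follows the same two-step skeleton as the paper: first establish that $D\Phi$ restricts to a homeomorphism $\p SM_1\to\p SM_2$, then lift this to the hyperspace of closed subsets. Two minor remarks on the comparison. First, your Step~1 rederives what the paper already recorded as \eqref{eq:Phi_preserves_boundary_inner_product}, so in the paper's own proof this step is essentially a one-line citation; your explicit computation is fine but not new. (The claim in your plan that $D\Phi$ ``preserves the Sasaki metric'' is stronger than what you actually prove or need---you only use that it is a homeomorphism between compact metric spaces, which follows from it being a diffeomorphism.) Second, for Step~2 the paper argues via Kuratowski convergence and the equivalence with Hausdorff convergence on compact spaces, whereas you give a direct $\varepsilon$--$\delta$ argument using uniform continuity. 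Your route is slightly more elementary and self-contained; the paper's route has the advantage of reusing the Kuratowski framework already set up for Lemma~\ref{overline R_F is continuous}. Both are standard and neither has a real edge.
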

\begin{proof}
We start by noticing that, if $(X,d_X)$ and $(Y,d_Y)$ are compact metric spaces, and $f:X \to Y$ is continuous, then the lift $\widehat{f}:\mathcal{C}(X) \to \mathcal{C}(Y), \: \widehat{f}(K)=f(K)$ is well defined. Let $(K_i)_{i=1}^\infty \subset \mathcal{C}(X)$ be a sequence that converges to $K \in \mathcal{C}(X)$ with respect to Kuratowski topology. We will show that $\widehat{f}(K_i)$ also converges to $\widehat{f}(K)$ with respect to Kuratowski topology, and by \cite{ambrosio2004topics} Proposition 4.4.14 this will imply that the lift $\widehat{f}$ is continuous.

Let $(y_i)_{i=1}^\infty, \; y_i \in f(K_i)$ be a sequence with a convergent subsequence $(y_{i_k})_{k=1}^\infty, \; y_{i_k}\in f(K_{i_k})$, we denote the limit point by $y\in Y$. Thus for each $i_k$, there exists $x_{i_k}\in K_{i_k}$ such that $f(x_{i_k})=y_{i_k}$. Since $X$ is a compact metric space the  sequence $(x_{i_k})_{k=1}^\infty$ has a convergent subsequence in $X$, we denote it by $(x_{i_k})_{k=1}^{\infty}$ again. 
Notice that $K_i\to K$ in the sense of Kuratowski convergence, we get $x_{i_k} \to x \in K$. By the continuity of $f$, one has 
$$
y=\lim_{k \to \infty}y_{i_k}=\lim_{k \to \infty}f(x_{i_k})=f(x)\in f(K).
$$

Let $y \in f(K)$ and $x \in K$ such that $f(x)=y$. Since $K_i \to K$ in the sense of Kuratowski, there exists a sequence $(x_i)_{i=1}^\infty, \: x_i \in K_i$ that converges to $x$. On the other hand, $f$ is continuous, thus the sequence $f(x_i)\in f(K_i)$ converges to $f(x)=y$. This completes the proof of the convergence of $f(K_i)$ to $f(K)$.

\medskip
Notice that by \eqref{eq:big_phi} the mapping $D \Phi:\p TM_1 \to \p TM_2$ is a smooth invertible bundle map. In particular $D \Phi$ is continuous with a continuous inverse. Therefore, $\widehat{D \Phi}$ is a well defined homeomorphism according to the first part of the proof.  
\end{proof}

Next we define a mapping
\begin{equation}
\label{eq:the_map}
\Psi: \overline{M_1} \rightarrow \overline{M}_2, \: \Psi:= (R_{\p M_2}^E)^{-1}\circ \widehat{D\Phi} \circ R^E_{\p M_1}.
\end{equation}
By \eqref{eq:equivalence_of_biger_data} and Proposition \ref{pr:R_F is homeo} the map $\Psi$ is well defined. Now we prove the main theorem of this section.
\begin{Theorem}
\label{th:topology}
Let $(N_i,g_i)$ and $M_i$ be as in Theorem \ref{th:main} such that \eqref{eq:collar_neib} and \eqref{eq:equivalence_of_biger_data} hold. Then the map $\Psi: \overline{M_1} \rightarrow \overline{M}_2,$ is a homeomorphism such that $\Psi|_{M_1}:M_1 \to M_2$ and $\Psi|_{\p M_1}=\phi.$
\end{Theorem}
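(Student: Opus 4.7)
The plan is to show $\Psi$ is a composition of three homeomorphisms (two of which have already been established), then identify its restriction to the boundary with $\phi$, which automatically forces the interior-to-interior statement.

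First I would verify that $\Psi$ is well defined and bijective. By Proposition \ref{pr:R_F is homeo}, $R^E_{\p M_i}:\overline{M}_i \to R^E_{\p M_i}(\overline{M}_i)\subset \mathcal{C}(\p SM_i)$ is a homeomorphism for each $i$. By Lemma \ref{Le:lift_of_Dphi_is_homeo}, $\widehat{D\Phi}:\mathcal{C}(\p SM_1)\to \mathcal{C}(\p SM_2)$ is a homeomorphism. The equivalence \eqref{eq:equivalence_of_biger_data} tells us precisely that $\widehat{D\Phi}$ sends the image $R^E_{\p M_1}(\overline{M}_1)$ bijectively onto $R^E_{\p M_2}(\overline{M}_2)$. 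Composing these three homeomorphisms yields that $\Psi:\overline{M}_1\to \overline{M}_2$ is itself a homeomorphism.

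Next I would check the boundary identity $\Psi|_{\p M_1}=\phi$. The key input is equation \eqref{eq:equality_of_measurement_sets} from the proof of Lemma \ref{Le:extension_from_tangential_data_to_full_data}, which asserts that for every $q\in \p M_1$,
\[
D\Phi\bigl(R^E_{\p M_1}(q)\bigr)=R^E_{\p M_2}(\phi(q)).
\]
Applying $(R^E_{\p M_2})^{-1}$ to both sides gives
\[
\Psi(q)=(R^E_{\p M_2})^{-1}\bigl(\widehat{D\Phi}(R^E_{\p M_1}(q))\bigr)=(R^E_{\p M_2})^{-1}\bigl(R^E_{\p M_2}(\phi(q))\bigr)=\phi(q),
\]
where the last step uses that $R^E_{\p M_2}$ is injective (Lemma \ref{R_F is one-to-one}). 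Thus $\Psi|_{\p M_1}=\phi$.

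Finally, the interior-to-interior statement follows by a topological argument with no further computation. Since $\phi:\p M_1\to \p M_2$ is a bijection and $\Psi|_{\p M_1}=\phi$, the restriction $\Psi|_{\p M_1}$ maps $\p M_1$ onto $\p M_2$. Because $\Psi:\overline{M}_1\to \overline{M}_2$ is a homeomorphism (hence bijection), we conclude
\[
\Psi(M_1)=\Psi(\overline{M}_1\setminus \p M_1)=\overline{M}_2\setminus \p M_2=M_2,
\]
so $\Psi|_{M_1}:M_1\to M_2$ as claimed. I do not anticipate any genuine obstacles here: the substance of the theorem is already packaged in Proposition \ref{pr:R_F is homeo}, Lemma \ref{Le:lift_of_Dphi_is_homeo}, and identity \eqref{eq:equality_of_measurement_sets}, and the present statement is essentially the bookkeeping that assembles them.
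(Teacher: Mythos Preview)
Your proof is correct. The difference from the paper is in the order and the tools used for the last two assertions. The paper first shows $\Psi(M_1)\subset M_2$ (and symmetrically for $\Psi^{-1}$) by a direct geometric argument: if $q=\Psi(p)\in\p M_2$ then $S_q\p M_2\subset R^E_{\p M_2}(q)$, whereas for an interior point $p$ no tangential unit sphere can sit inside $R^E_{\p M_1}(p)$, contradicting the fact that $D\Phi$ is a linear isomorphism. It then proves $\Psi|_{\p M_1}=\phi$ by the same tangential-sphere characterization. You instead invoke identity \eqref{eq:equality_of_measurement_sets}, already established in the proof of Lemma~\ref{Le:extension_from_tangential_data_to_full_data}, to get $\Psi|_{\p M_1}=\phi$ in one line, and then deduce $\Psi(M_1)=M_2$ purely from bijectivity. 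Your route is a bit more economical since it recycles work already done; the paper's route is more self-contained and makes explicit the geometric reason (presence of $S_p\p M$ in $R^E_{\p M}(p)$) that distinguishes boundary from interior points, a characterization that is reused later in the paper.
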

\begin{proof}
By \eqref{eq:equivalence_of_biger_data}, Proposition \ref{pr:R_F is homeo} and Lemma \ref{Le:lift_of_Dphi_is_homeo} the map $\Psi$ is a  homeomorphism.

Let $p \in M_1$. Suppose that $q:=\Psi(p)\in \p M_2$, it holds that $S_q \p M_2\subset R^E_{\p M_2}(q)$.
On the other hand by the proof of Lemma \ref{R_F is one-to-one} there is no $p' \in \p M_1$ such that $S_{p'} \p M_1 \subset R^E_{\p M_1}(p)$. Since $D \Phi$ is an isomorphism, we reach a contradiction and thus $q \in M_2$. Similar argument for the inverse mapping $\Psi^{-1}$ and $p \in M_2$ proves the second claim.

Let $p \in \p M_1$, then $S_p \p M_1 \subset R^E_{\p M_1}(p)$  and by \eqref{eq:collar_neib}
$$
D\Phi (S_p \p M_1)= S_{\phi(p)} \p M_2 \subset R^E _{\p M_2}(\phi(p)).
$$
Therefore, by the proof of Lemma \ref{R_F is one-to-one} we have $\Psi(p)=\phi(p)$.
\end{proof}

\section{The reconstruction of the differentiable structure}

In this section we will show that the map $\Psi:\overline{M}_1\to \overline{M}_2$ is a diffeomorphism. First we will introduce a suitable coordinate system of smooth manifold $\overline M_i$ that is compatible with the data \eqref{bigger geodesic measurment data}. We will consider separately coordinate charts for interior and boundary points. Then we will define a smooth structure on topological manifold $R^E_{\p M_i}(\overline M_i)$ such that the maps $R^E_{\p M_i}: \overline{M}_i \to R^E_{\p M_i}(\overline{M}_i)$ are diffeomorphisms. The third task is to show that  the map $\widehat{D \Phi}:R^E_{\p M_1}(\overline{M}_1) \to R^E_{\p M_2}(\overline{M}_2)$ is a diffeomorphism with respect to smooth structures of $R^E_{\p M_i}(\overline M_i)$. By the formula \eqref{eq:the_map}, that defines the map $\Psi$, the following diagram commutes 
\begin{equation}
\label{eq:diagram}
\begin{array}{ccc}
\overline M_1 & \stackrel{R^E_{\p M_1}}{\longrightarrow} & R^E_{\p M_1}(\overline M_1)
\\
&&
\\
\bigg\downarrow \Psi & & \bigg\downarrow \widehat{D\Phi}
\\
&&
\\
\overline M_2 & \stackrel{R^E_{\p M_2}}{\longrightarrow} & R^E_{\p M_2}(\overline M_2),
\end{array}
\end{equation}
and the above steps prove that the map $\Psi$ is a diffeomorphism. Also we will  explicitly construct the smooth structure for $R^E_{\p M}(\overline M)$ only using the data \eqref{bigger geodesic measurment data}. In the steps below, we will often consider only one manifold and do not use the sub-indexes, $M_1$ and $M_2$, when ever it is not necessary. 

\subsection{The recovery of self intersecting geodesics and conjugate points}
\medskip
Let us choose a point $p\in \overline M$ for the rest of this section. The first part of the section is dedicated to finding for the point $p$ a suitable $q\in \p M$, a neighborhood $V_q$ of $p$ and to construct a map
\begin{equation}
\label{eq:n-1,coordinate_map}
\Theta_q(z):=\frac 1{\|\exp_q^{-1}(z)\|_g} \exp_q^{-1}(z)\in S_qN ,\quad z\in V_q .
\end{equation}
using our data \eqref{bigger geodesic measurment data}. 
We start with considering what does it require from $q \in \p M$ to be "suitable``.

\color{black}
\medskip
Let $(q,\eta)\in R^E_{\p M}(p)$, we say that $\eta$ is a conjugate direction with respect to $p$, if $p$ is a conjugate to $q$ on the geodesic $\gamma_{q,-\eta}$. This is equivalent to the existence  $s\in (0,\tau_{exit}(q,-\eta))$ such that $\gamma_{q,-\eta}(s)=p$ and a non-trivial Jacobi field $J$ such that $J(0)=0$ and $J(s)=0$. 

We say that a geodesic $\gamma$ is self-intersecting at the point $q \in \overline M$, if there exists $t_1<t_2$ such that 
$$
\gamma(t_1)=q=\gamma(t_2).
$$
Notice that the geodesic $\gamma_{q,-\eta}([0,\tau_{exit}(q,-\eta)])$ may be self-intersecting at $p$, and moreover there may be several $s'\in  (0,\tau_{exit}(q,-\eta))$ such that $\gamma_{q,-\eta}(s')=p$ and non-trivial Jacobi fields that vanish at $0$ and $s'$.

In the next Lemma we show that under the assumptions of Theorem \ref{th:main} most of the geodesics that start and end at $\p M$ and hit the point $p$, do it only one time.

\begin{Lemma}
\label{Le:non_intersection_directions}
Let $(N,g)$ and $M$ be as in the Theorem \ref{th:main}. Let $p \in \overline{M}$. Denote by
\begin{equation}
\label{eq:self_intersection_directions}
\begin{array}{lll}
I&:=&\{\xi\in S_pN; \hbox{ the geodesic } \gamma_{p,\xi}:[-\tau_{exit}(p,-\xi),\tau_{exit}(p,\xi)] \to \overline{M}
\\
&& \hbox{ is self-intersecting at } p\}.
\end{array}
\end{equation}
Then the set $S_pN \setminus I$ is open and dense. 

\medskip
If $r \in R_{\p M}^E(\overline M)$ is given and $p\in \overline{M}$ is the unique point for which $R_{\p M}^E(p)=r$ we define
\begin{equation}
\label{eq:allmost_good_directions}
K(p):=\{(q,\eta) \in (R^E_{\p M} (p) \setminus S_pN) ; \:  \gamma_{q,-\eta} \hbox{ is not self-intersecting at } p\}\subset \partial SM.
\end{equation}
See Figure \eqref{fig:K(p)}. 

Then $K(p)$ is not empty and $K(p)$ is determined by $r$ and data \eqref{bigger geodesic measurment data}.

More precisely if $(N_i,g_i)$ and $M_i$ are as in Theorem \ref{th:main} such that \eqref{eq:collar_neib} and \eqref{eq:equivalence_of_biger_data} hold, then $ \hbox{ for every } p \in \overline{M_1}$ the set $K(p)\neq \emptyset$ and 
\begin{equation}
\label{eq:almost_good_dir_are_equiv}
D\Phi (K(p))=K(\Psi(p)).
\end{equation}
\end{Lemma}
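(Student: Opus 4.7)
My plan is to establish the three assertions of the lemma in sequence: openness and density of $S_pN\setminus I$; non-emptiness together with a data-theoretic description of $K(p)$; and finally the equivariance $D\Phi(K(p))=K(\Psi(p))$.

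For openness, if $\xi_n\in I$ with $\xi_n\to\xi$ and $t_n\neq 0$ witness the self-intersection at $\xi_n$, then $|t_n|$ is bounded below by the local injectivity radius of $\exp_p$ (uniform in $\xi$ by compactness) and above by \eqref{eq:exit_time_func_is_bdd}; continuity of $\tau_{exit}$ from Lemma \ref{Le:exit_time_func_is_cont} lets me extract a limit $t_0\neq 0$ in $[-\tau_{exit}(p,-\xi),\tau_{exit}(p,\xi)]$ with $\gamma_{p,\xi}(t_0)=p$, forcing $\xi\in I$. For density I argue by contradiction, adapting Lemma \ref{Le:curve_xi(s)} from the case $p\neq q$ to the self-intersection case $p=q$. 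If $I$ had non-empty interior, I would fix $\eta$ in it with return times $s_1<\cdots<s_K$ and return velocities $\zeta_k=\dot\gamma_{p,\eta}(s_k)$; note that $\zeta_k\neq\eta$ (a period-$s_k$ geodesic would be trapped, contradicting \eqref{eq_exit time is bounded}), so each hypersurface $\widetilde S_k=\exp_p(\zeta_k^\perp)$ is transverse to $\gamma_{p,\eta}$ at $s_k$. The signed distance $\varrho_k$ and the implicit function theorem yield smooth $f_k:V_k\to(s_k-\delta,s_k+\delta)$, and the Baire category step from the end of the proof of Lemma \ref{Le:curve_xi(s)} extracts a smooth curve $\xi(s)$ with $\dot\xi(s)\neq 0$ and $\gamma_{p,\xi(s)}(\ell(s))=p$. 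The Gauss lemma argument of \eqref{variation} in Proposition \ref{generic A} then forces $\ell(s)\equiv\ell_0$, producing a one-parameter family of geodesic loops at $p$ of common length and contradicting \eqref{eq:KPP_property}.

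For $K(p)\neq\emptyset$, I intersect the dense open set $S_pN\setminus I$ with the open set of inward directions at $p$ (only needed when $p\in\p M$); any $\xi$ in this non-empty intersection yields $(q,\eta):=E_p(\xi)$ with $q\neq p$ (otherwise $\xi\in I$) and $\gamma_{q,-\eta}$ is a reparametrization of $\gamma_{p,\xi}$, so non-self-intersection at $p$ is equivalent to $\xi\notin I$. To characterize $K(p)$ intrinsically from the data I introduce the \emph{trace set}
$$
\Sigma(q,\eta):=\{R^E_{\p M}(p')\in R^E_{\p M}(\overline M):(q,\eta)\in R^E_{\p M}(p')\},
$$
which is manifestly determined by \eqref{bigger geodesic measurment data}. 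Under the homeomorphism $R^E_{\p M}$ of Proposition \ref{pr:R_F is homeo} it corresponds to the image of $\gamma_{q,-\eta}$ in $\overline M$. The key observation is that if $\gamma_{q,-\eta}$ visits $p$ at two times $s_1<s_2$ with velocities $\zeta_1,\zeta_2$, then $\zeta_1\neq\zeta_2$ by non-trapping, and also $\zeta_1\neq-\zeta_2$, since the resulting palindromic identity $\gamma(u)=\gamma(s_1+s_2-u)$ would force $\dot\gamma((s_1+s_2)/2)=0$, contradicting unit speed. The two local branches at $p$ therefore have distinct tangent lines, so $r=R^E_{\p M}(p)$ is a topological manifold point of $\Sigma(q,\eta)$ precisely when $\gamma_{q,-\eta}$ visits $p$ only once, giving
$$
K(p)=\{(q,\eta)\in r\setminus S_pN:r\text{ is a topological manifold point of }\Sigma(q,\eta)\};
$$
the exclusion of $r\cap S_pN=\{(q,\eta)\in r:q=p\}$ is read off from the data using $g|_{\p M}$ and the characterization $p\in\p M\Leftrightarrow S_p\p M\subset r$ from the proof of Lemma \ref{R_F is one-to-one}.

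The equivariance \eqref{eq:almost_good_dir_are_equiv} is then formal: by \eqref{eq:equivalence_of_biger_data} and Lemma \ref{Le:lift_of_Dphi_is_homeo}, $\widehat{D\Phi}$ sends $R^E_{\p M_1}(p)$ to $R^E_{\p M_2}(\Psi(p))$ and $\Sigma_{M_1}(q,\eta)$ to $\Sigma_{M_2}(D\Phi(q,\eta))$, the topological manifold condition is preserved by homeomorphisms, and $\Psi|_{\p M_1}=\phi$ from Theorem \ref{th:topology} together with $D\phi$ mapping tangent spheres to tangent spheres ensures that the excluded subsets $r\cap S_pN_1$ and $R^E_{\p M_2}(\Psi(p))\cap S_{\Psi(p)}N_2$ match. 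The main obstacle I anticipate is the ruling-out of palindromic passes in the topological characterization: without the unit-speed argument, a genuine double-pass through $p$ with opposite velocities would leave $\Sigma(q,\eta)$ locally a manifold at $r$ while violating the defining self-intersection condition of $K(p)$, so the intrinsic description would fail; a parallel delicacy appears in the density step, where the choice of transverse hypersurfaces at the return velocities $\zeta_k$ must avoid degenerate configurations in order for the implicit function theorem to apply cleanly.
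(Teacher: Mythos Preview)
Your proposal is correct and follows essentially the same route as the paper: closedness of $I$ via compactness and the injectivity radius bound, nowhere-density by adapting Lemma~\ref{Le:curve_xi(s)} and Proposition~\ref{generic A} to the loop case $q=p$, and the data-characterization of $K(p)$ through the local topology of the trace set $\Sigma(q,\eta)$ at $r$, from which the equivariance \eqref{eq:almost_good_dir_are_equiv} is immediate via \eqref{eq:equivalence_of_biger_data} and Lemma~\ref{Le:lift_of_Dphi_is_homeo}.

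The one substantive difference is in the justification of the topological test. The paper simply asserts that $z$ lies outside the self-intersection locus $J(q,\eta)$ if and only if $\Sigma(q,\eta)$ is locally homeomorphic to an interval near $R^E_{\p M}(z)$, without arguing why two passes through $p$ cannot produce a single arc; your palindrome argument (ruling out $\zeta_1=-\zeta_2$ via $\dot\gamma((s_1+s_2)/2)=0$) together with the non-trapping argument for $\zeta_1\neq\zeta_2$ fills this in explicitly. The paper also treats $p\in\p M$ separately, observing from Corollary~\ref{Co:interior_of_geo} that $\gamma_{q,-\eta}$ can meet $\p M$ only at its endpoints, so that $(q,\eta)\in K(p)$ reduces to $q\neq p$; your uniform manifold-point criterion handles both cases at once, which is slightly cleaner.
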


\begin{figure}[h]
 \begin{picture}(100,150)
  \put(-60,-10){\includegraphics[width=200pt]{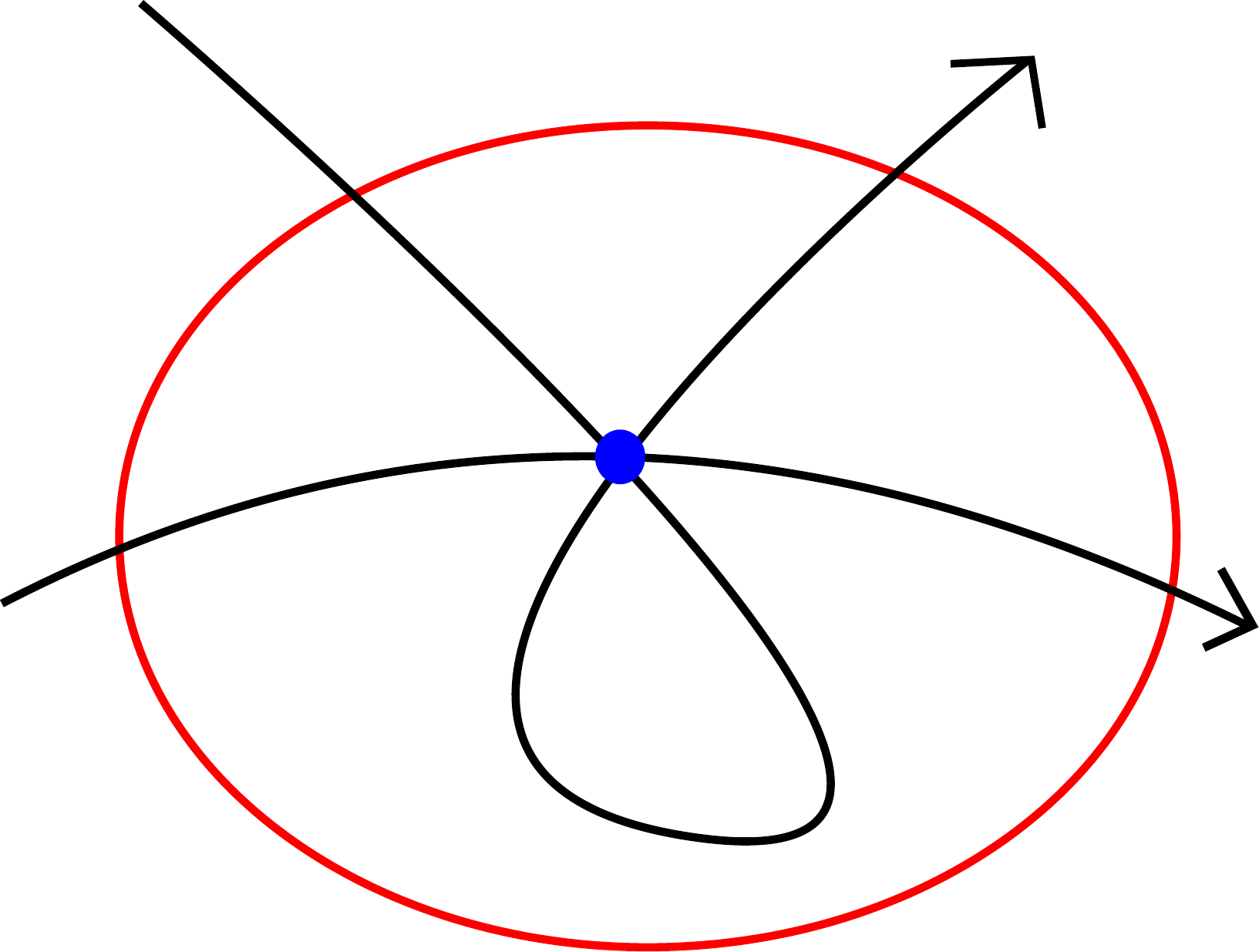}}
  \put(70,123){$\gamma_{z,\xi}$}
  \put(97,45){$\gamma_{w,\eta}$}
  \put(-53,85){$\p M$}
  \put(36,78){$p$}
   \end{picture}
\caption{Here is a schematic picture about $K(p)$, where the point $p \in M$ is the blue dot. The black curves represent the geodesics $\gamma_{z,\xi}$, and $\gamma_{w,\eta}$ respectively, where vectors $(z,\xi), (w,\eta) \in R_{\p M}^E(p)$. Notice that only $(w,\eta)\in K(p)$.}
\label{fig:K(p)}
\end{figure}

\begin{proof}
Suppose first that $p \in M$. We start with proving that $I$ is closed. Let $\xi \in \overline{I}$ and choose a sequence $(\xi_j)_{j=1}^\infty \subset I$ that converges to $\xi$ in $S_p N$. Choose a sequence $(t_j)_{j=1}^\infty \subset \R$ that satisfies
$$
\exp_p(t_j\xi_j)=p \hbox{ and } 0<|t_j|.
$$
By \eqref{eq:exit_time_func_is_bdd} we can without loss of generality assume that
$$
t_j \longrightarrow t \hbox{ as } j \longrightarrow \infty.
$$
Then it must hold that $0< |t|$ since any geodesic starting at $p$ cannot self-intersect at $p$ before time inj$(p)>0$, where inj$(p)$ is the injectivity radius at $p$. Therefore, $\exp_p(t\xi)=p$ and we have proved that $\xi \in I$. Thus $I$ is closed. 

From the proofs of Lemma \ref{Le:curve_xi(s)} and Proposition \ref{generic A} it follows that the set $I$ is nowhere dense, this is that $I$ does not contain any open sets. Thus $S_pN \setminus I$ is open and dense. Moreover the set
$$
K(p)=\{(\gamma_{p,\xi}(\tau_{exit}(p,\xi)),\dot \gamma_{p,\xi}(\tau_{exit}(p,\xi)))\in \p_-SM;  \: \xi \in S_pN \setminus I\}
$$ 
is not empty. 

Denote $r = R_{\p M}^E(p)$. Next we will show that $r$ and data \eqref{bigger geodesic measurment data} determine the set $K(p)$. Let $(q,\eta) \in r$. We write $J(q,\eta)$ for the set of self-intersection points of geodesic segment $\gamma_{q,-\eta}:(0,\tau_{exit}(q,-\eta)) \to M$. Due to non-trapping assumption of $M$ the set $J(q,\eta)$ is finite. Since the map $R_{\p M}^E: \overline{M}\to R_{\p M}^E( \overline{M})$ is a homeomorphism, it holds that 
$$
z \in \gamma_{q,-\eta}((0,\tau_{exit}(q,-\eta))) \setminus  J(q,\eta)
$$ 
if and only if $R_{\p M}^E(z)$ has a neighborhood $V \subset R_{\p M}^E(\overline M)$ such that $\Sigma(q,\eta) \cap V$
is homeomorphic to interval $[0,1)$ or $(0,1)$, here $\Sigma(q,\eta)$ is as in \eqref{eq:image_of_open_geo_1}. Therefore, for a given $r=R_{\p M}^E(p)$ and $(q,\eta)\in r$ the data \eqref{bigger geodesic measurment data} determines the set $R_{\p M}^E(J(q,\eta))$ and it holds that 
$$
(q,\eta) \in K(p) \hbox{ if and only if } r \notin R_{\p M}^E(J(q,\eta)).
$$

Finally we will verify the equation \eqref{eq:almost_good_dir_are_equiv} in the case of $p \in M_1$. Denote $r=R_{\p M_1}^E(p)$. By Theorem \ref{th:topology} the map $\Psi$ is a homeomorphism. With \eqref{eq:equivalence_of_biger_data} this implies that for every $(q, \xi)\in \p_-SM_1$ 
$$
\widehat{D \Phi}(\Sigma(q,\xi))=\Sigma(\phi (q),D\Phi \xi)
$$
and the self-intersection points of $\Sigma(y,\xi)$ are mapped onto self-intersection points of $\Sigma(\phi(y),D\Phi\xi)$ under the map $\widehat{D \Phi}$. Therefore, $D\Phi (K(p))=K(\Psi(p))$.

\medskip
Suppose next that $p \in \p M$.  Assume that there exists a sequence $(\xi_j)_{j=1}^\infty \subset I$ that converges to a vector $\xi \in S_p\p M$ in $S_pN$. Choose a sequence $(t_j)_{j=1}^\infty $ such that
$$
\exp_p(t_j\xi_j)=p \hbox{ and } 0<t_j.
$$
Due to Lemmas \ref{LE:2_fundamental_form} and \ref{Le:exit_time_func_is_cont} we may assume that $t_j \to 0$ as $j \to \infty$. But then we have a contradiction with the injectivity radius of $p$. Therefore, $\overline I$ is contained in the interior of $\p_+ S_pM$. Then by a similar argument as in the case of $p \in M$ we have shown that $S_pN \setminus I$ is open and dense. 

Let $(y,\xi) \in \p_- SM$.  We will use a short hand notation
\begin{eqnarray}
\label{eq:geodesic_segments1}
\overline \Sigma(y,\xi)&:=& \{R_{\p M}^E(q); q \in \overline{M}, \: (y,\xi) \in R_{\p M}^E(q))\}
\\
&=&R^E_{\p M}(\gamma_{y,-\xi}([0,\tau_{exit}(y,-\xi)]))\subset 2^{C(\partial SM)} \nonumber,
\end{eqnarray}
for the image of the geodesic segment $\gamma_{y,-\xi}([0,\tau_{exit}(y,-\xi)])$ under the map $R^E_{\p M}$. 
Denote $r = R_{\p M}^E(p)$. Then for a vector $(q,\eta)\in r$ it holds that $(q,\eta)\in K(p)$ if and only if $q \neq p$. Thus the set 
$$
K(p)=\{(\gamma_{p,\xi}(\tau_{exit}(p,\xi)),\dot \gamma_{p,\xi}(\tau_{exit}(p,\xi)))\in \p_-SM;  \: \xi \in (\p_+ S_pM)^{int} \setminus I\}.
$$  
is not empty.

Now we will verify the equation \eqref{eq:almost_good_dir_are_equiv} in the case of $p \in \p M_1$. Since the map $\Psi$ is a homeomorphism we have by the data \eqref{eq:equivalence_of_biger_data} that
\begin{equation}
\label{eq:image_of_closed_geo}
\widehat{D \Phi} (\overline \Sigma(y,\xi))=\overline \Sigma(\phi (y),D \Phi\xi) \hbox{ for all } (y,\xi)\in \p_-SM_1.
\end{equation}
Therefore, $D\Phi(K(p))=K(\Psi(p))$ and the equation \eqref{eq:almost_good_dir_are_equiv} is proven.
\end{proof}

In the next Lemma we will show that we can find the set of conjugate directions with respect to $p$ from data \eqref{bigger geodesic measurment data} and there exist lots
of $(q,\eta) \in r:=R_{\p M}^E(p)$ such that $\eta$ is not a conjugate direction with respect to $p$. If $(q,\eta) \in K(p)$ is not a conjugate direction with respect to $p$, we will later construct coordinates for $p$ such that $(n-1)$-coordinates are given by $\eta$.
 
\begin{Lemma}
\label{Le:bad_directions}
Let $(N,g)$ and $M$ be as in the Theorem \ref{th:main}. If $r \in R_{\p M}^E(\overline M)$ is given and $p\in \overline{M}$ is the unique point for which $R_{\p M}^E(p)=r$, we define the set of "good`` directions
\begin{equation}
\label{eq:good_directions}
\begin{array}{ll}
K_G(p):= &
\{(q,\eta) \in K(p); \eta \hbox{ is not a conjugate direction  with respect to $p$} \}\subset \p SM.
\end{array}
\end{equation} 
Then $K_G(p)$ is not empty, $\pi(K_G(p))\subset \p M$ is open. Moreover $r$ and the data \eqref{bigger geodesic measurment data} determine the set $K_G(p)$.

More precisely if $(N_i,g_i)$ and $M_i$ are as in Theorem \ref{th:main} such that \eqref{eq:collar_neib} and \eqref{eq:equivalence_of_biger_data} hold, then $ \hbox{ for every } p \in \overline{M_1}$ the set $K_G(p)\neq \emptyset$, $\pi(K_G(p))\subset \p M_1$ is open and 
\begin{equation}
\label{eq:the_set_of_good_dir_are_the_same}
D\Phi (K_G(p))=K_G(\Psi(p)).
\end{equation}

\end{Lemma}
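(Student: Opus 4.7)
The plan is to establish four items in turn: (i) non-emptiness of $K_G(p)$, (ii) openness of $\pi(K_G(p)) \subset \p M$, (iii) data-determination of $K_G(p)$, and (iv) the equivariance \eqref{eq:the_set_of_good_dir_are_the_same}.

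For (i), I would take any $\xi_0 \in S_pN \setminus I$ provided by Lemma \ref{Le:non_intersection_directions} and invoke standard Jacobi-field theory to note that along the compact arc $\gamma_{p,\xi_0}([0,\tau_{exit}(p,\xi_0)])$ the conjugate points of $p$ form a finite (discrete) set. Combined with continuity of $\tau_{exit}$ on $S\overline M \setminus S\p M$ (Lemma \ref{Le:exit_time_func_is_cont}) and the continuous dependence of conjugate times on the direction, a small perturbation of $\xi_0$ (staying in $S_pN\setminus I$) avoids the conjugate locus exactly at the exit time, producing $\xi \in S_pN \setminus (I \cup S_p\p M)$ with $d\exp_p$ invertible at $\tau_{exit}(p,\xi)\xi$; the corresponding exit pair $(\gamma_{p,\xi}(\tau_{exit}),\dot\gamma_{p,\xi}(\tau_{exit}))$ then lies in $K_G(p)$.

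For (ii), I would take $(q,\eta) \in K_G(p)$ with $\gamma_{q,-\eta}(\tau) = p$ and apply the implicit function theorem to the smooth map $(q',\eta',t) \mapsto \exp_{q'}(-t\eta')$ at the point $(q,\eta,\tau)$. Invertibility of $d\exp_q|_{-\tau\eta}$ yields unique smooth $\eta'(q'), t(q')$ as $q'$ ranges in $\p M$ near $q$, with $\gamma_{q',-\eta'(q')}(t(q'))=p$; non-conjugateness and non-self-intersection at $p$ are open conditions, so $(q',\eta'(q')) \in K_G(p)$ for all $q'$ in a neighborhood of $q$, giving the openness.

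The heart of the lemma is (iii). I would characterize $K_G(p)$ purely from the data as follows: $(q,\eta) \in K(p)$ belongs to $K_G(p)$ if and only if there exist a neighborhood $V \subset R^E_{\p M}(\overline M)$ of $r:=R^E_{\p M}(p)$ and a neighborhood $U \subset S_qN$ of $\eta$, together with a continuous map $\sigma: V \to U$ such that $\sigma(r)=\eta$, $(q,\sigma(r')) \in r'$ for every $r' \in V$, and $\sigma$ is the unique such section in $U$. The ``only if'' direction is produced by pushing forward the IFT section from (ii) through the homeomorphism $R^E_{\p M}$ (Proposition \ref{pr:R_F is homeo}). The ``if'' direction, which is the main obstacle, rules out conjugate directions by invoking the local geometry of $\exp_q$ at a conjugate point: a singular $d\exp_q|_{-\tau\eta}$ forces $\exp_q$ to fail to be a local homeomorphism near $-\tau\eta$, which either leaves points $p'$ arbitrarily close to $p$ unreachable from directions in $U$ (breaking existence of $\sigma$) or produces multiple preimages in $U$ (breaking uniqueness).

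Finally for (iv), the characterization in (iii) is expressed entirely in terms of $R^E_{\p M}(\overline M)$, $\p SM$, and the bundle structure over $\p M$, all of which are intertwined equivariantly by the triple $(\widehat{D\Phi}, D\Phi, \phi)$ thanks to \eqref{eq:Phi_preserves_boundary_inner_product}, \eqref{eq:equivalence_of_biger_data}, Lemma \ref{Le:lift_of_Dphi_is_homeo}, and Theorem \ref{th:topology}. Hence a section witnessing $(q,\eta) \in K_G(p)$ on the $M_1$ side transports, via conjugation by these maps, to a section witnessing $(D\Phi(q), D\Phi\eta) \in K_G(\Psi(p))$ on the $M_2$ side; applying the same argument with $\Phi^{-1}$ gives the reverse inclusion and hence \eqref{eq:the_set_of_good_dir_are_the_same}. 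The truly delicate step is the converse of the topological characterization in (iii), where one must carefully exploit the Morse-theoretic local model of $\exp_q$ at conjugate points to exclude the continuous, unique sections guaranteed at non-conjugate directions.
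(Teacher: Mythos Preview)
Your outline for (i), (ii), and (iv) is close in spirit to the paper, but (iii) --- the data-determination of $K_G(p)$ --- is where you diverge substantially, and where there is a genuine gap.

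The paper does \emph{not} characterize $K_G(p)$ via topological sections $\sigma:V\to U\subset S_qN$. Instead it fixes $(q,\eta)\in K(p)$ and considers the family $H(q,\eta)$ of smooth curves $\sigma(s)=(q(s),\eta(s))$ in $\partial_-SM$ with $\sigma(0)=(q,\eta)$ such that every geodesic $\gamma_{q(s),-\eta(s)}$ passes through $p$ and is not self-intersecting at $p$. It then proves the purely data-readable criterion: $\eta$ is a conjugate direction with respect to $p$ if and only if there exists $\sigma\in H(q,\eta)$ with $\dot q(0)=0$ and $(D_s\eta(s)^T|_{s=0})^T\neq 0$. This is exactly the Jacobi-field definition of conjugacy translated into boundary quantities: a curve of geodesics through $p$ whose initial point is stationary to first order but whose initial direction genuinely varies corresponds to a nontrivial Jacobi field vanishing at both ends. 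Both implications are handled by explicit computations with geodesic variations and the Symmetry Lemma; no appeal to local models of $\exp_q$ is needed.

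Your topological criterion, by contrast, hinges on the claim that a singular $D\exp_q|_{-\tau\eta}$ forces $\exp_q$ to fail to be a local homeomorphism near $-\tau\eta$, so that a continuous unique section cannot exist. This is not automatic for smooth maps (think of $x\mapsto x^3$), and for Riemannian exponential maps it is the content of the Morse--Littauer theorem (1932): $\exp_q$ is never injective on any neighborhood of a conjugate vector. You neither cite nor prove this, and the phrase ``Morse-theoretic local model'' does not substitute for it. Even granting Morse--Littauer, you still need to argue carefully that non-injectivity of $\exp_q$ on a ball in $T_qN$ forces non-existence or non-uniqueness of your section into $S_qN$ (i.e.\ after radial projection); this uses that the kernel of $D\exp_q$ at a conjugate vector is transverse to the radial direction (Gauss lemma), and the details are not written.

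A smaller point on (i): your perturbation argument is incomplete. Knowing that conjugate times along a \emph{fixed} geodesic are discrete does not by itself show that one can perturb $\xi_0$ so that $\tau_{exit}(p,\xi)$ avoids the (also moving) conjugate times of $\gamma_{p,\xi}$. The paper sidesteps this by taking $\xi$ to be the initial direction of a \emph{distance-minimizing} geodesic from $p$ to $\partial M$; then $\tau_{exit}(p,\xi)=\mathrm{dist}_g(p,\partial M)$ lies strictly before the cut time, hence before the first conjugate time, so the exit direction is non-conjugate. This immediately produces a nonempty open set $V_c\subset S_pN$ of non-conjugate exit directions, and $K_G(p)$ corresponds to $V_c\setminus I$.
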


\begin{proof}
Suppose first that $p \in M$. Let $\delta:SN \to [0,\infty]$ be the cut distance function, i.e.,
$$
\delta(y,\eta):=\sup\{t>0;d_g(\gamma_{y,\eta}(t),y)=t\}.
$$

Let $\xi \in S_pN$ be such a unit vector that $\gamma_{p,\xi}$ is a shortest geodesic from $p$ to the boundary $\p M$. By Lemma 2.13 of \cite{Katchalov2001} it holds that 
$$
\delta(p,\xi)>dist_g(p, \p M).
$$ 
Therefore, $(\gamma_{p,\xi}(\tau_{exit}(p,\xi)),\dot \gamma_{p,\xi}(\tau_{exit}(p,\xi)))$ is not a conjugate direction with respect to $p$. Moreover there exists an open neighborhood $V\subset S_pN$ of $\xi$ such that for any $\xi' \in V$ the vector $(\gamma_{p,\xi'}(\tau_{exit}(p,\xi')),\dot \gamma_{p,\xi'}(\tau_{exit}(p,\xi')))$ is not a conjugate direction. Let $I \subset S_pN$ be defined as in \eqref{eq:self_intersection_directions}. Denote
$$
V_c:=\bigg\{\xi \in S_pN; D\exp_p\bigg|_{\tau_{exit}(p,\xi)\xi} \hbox{ is not singular }\bigg\}
$$
that is open and non-empty, since $V \subset V_c$. Therefore, by the Lemma \ref{Le:non_intersection_directions} it holds that the set $V_c \cap (S_pN \setminus I)=V_c \setminus I$ is open and non-empty. Thus
$$
K_G(p)=\bigg\{(\gamma_{p,\xi}(\tau_{exit}(p,\xi)),\dot \gamma_{p,\xi}(\tau_{exit}(p,\xi)))\in (\p_-SM\setminus S_pN);  \xi \in V_c \setminus I \bigg\}
$$ 
is not empty and moreover $\pi(K_G(p)) \subset \p M$ is open. 

\medskip
Next we show that for given $r=R_{\p M}^E(p)$ the data \eqref{bigger geodesic measurment data} determines the set $K_G(p)$, if $r\in R^{E}_{\p M}(M)$. Fix $(q,\eta)\in K(p).$ Let $t_p \in (0,\tau_{exit}(q,-\eta)]$ be such that $\gamma_{q,-\eta}(t_p)=p$. Consider the collection $H(q,\eta)$ of $C^{\infty}$-curves $\sigma:(-\epsilon,\epsilon)\to \p_- S M, \:\sigma(s)=(q(s),\eta(s))$ that satisfy the following conditions
\begin{enumerate}
\item[(H1)] $\sigma(0)=(q,\eta)$, 
\item[(H2)] $r\in \Sigma(q(s),\eta(s))$ for all $s \in (-\epsilon,\epsilon)$
\item[(H3)] $r \notin R_{\p M}^E(J(q(s),\eta(s)))$.
\end{enumerate}
Here we use the notations from Lemma \ref{Le:non_intersection_directions}
$$
\Sigma(q(s),\eta(s))=R^E_{\p M}(\gamma_{q(s),-\eta(s)}((0,\tau_{exit}(q(s),-\eta(s))))),
$$
and 
$J(q(s),\eta(s))$ is the collection of self-intersection points of the geodesic segment
\\
$\gamma_{q(s),-\eta(s)}:(0,\tau_{exit}(q(s),-\eta(s))) \to \overline{M}$. Choose a $C^{\infty}$-curve $s \mapsto \sigma(s)$ on $\p_-SM$, such that (H1) holds. By \eqref{eq:geodesic_segments1} we can check, if the property (H2) holds. By the proof of Lemma \eqref{Le:non_intersection_directions} we can check, if property (H3) holds  for the curve $\sigma$. Therefore, for a given $r\in R_{\p M}^E(M)$ and $(q,\eta) \in K(p)$ we can check the validity of properties (H1)--(H3) for any smooth curve $\sigma$ on $\p_-SM$ with  the data \eqref{bigger geodesic measurment data}. 

\medskip
Next we show that the collection $H(q,\eta)$ is not empty. Let $\xi  \in S_pN$ be the vector that satisfies $(\gamma_{p,\xi}(t_p),\dot \gamma_{p,\xi}(t_p))=(q,\eta)$. Then $\tau_{exit}(p,\xi)=t_p$. Let $w \in T_pN$, $w \perp \xi$ and $\epsilon>0$ be small enough. We define
\begin{equation}
\label{eq:definition_G_1}
w(s):=\frac{\xi+sw}{\|\xi+sw\|_g} \hbox{ and } f(t,s):=\exp_p(t\tau_{exit}(p,w(s))w(s)), \: s\in (-\epsilon,\epsilon), t\in [0,1].
\end{equation}
Notice that the curve $s\mapsto f(1,s)\in \p M$ is smooth by Lemma \ref{Le:exit_time_func_is_cont}. We conclude that a curve $\sigma$ can be for instance defined as follows
\begin{equation}
\label{eq:definition_G_2}
\sigma(s)=(q(s),\eta(s)):=\bigg(f(1,s),\tau_{exit}(p,w(s))^{-1}\frac{\p }{\p t}f(t,s)\bigg|_{t=1}\bigg), \: s\in (-\epsilon,\epsilon),
\end{equation}
where $\epsilon>0$ is small enough. Observe that the curve $\sigma(s)$ satisfies conditions (H1) and (H2) automatically. The condition (H3) is valid by the Lemma \ref{Le:non_intersection_directions}, since $(q,\eta)\in K(p)$. We also note that
\begin{equation}
\label{eq:definition_G_3}
\frac{d}{ds}q(s)\bigg|_{s=0}=D \exp_p\bigg|_{t_p\xi}\bigg(\frac{d}{ds}\tau_{exit}(p,w(s))\bigg|_{s=0}\xi+t_p\dot{w}(0)\bigg) 
\end{equation}
and
\begin{equation}
\label{eq:definition_G_4}
\dot{w}(0)=w.
\end{equation}
\medskip

Now we show the relationship between conjugate directions and curves $\sigma \in H(q,\eta)$. Let us first consider some notations we will use. Let $c:(a,b) \to N$ be a smooth path and $V$ a smooth vector field on $c$. By this we mean that $t \mapsto V(t)\in T_{c(t)}N$. We will write $D_tV$ for the covariant derivative of $V$ along the curve $c$. The properties of operator $D_t$ are considered for instance in Chapter 4 of \cite{Lee2}. 
We recall that locally $D_t V$ is defined by the formula
\begin{equation}
\label{eq:covariant_der_of_vec_field_on _curve}
D_t V(t)=\bigg(\frac{d}{dt} V^k(t)+V^j(t)\dot{c}^i(t)\Gamma_{ji}^k(c(t))\bigg)\frac{\p}{\p x^k}(c(t)),
\end{equation}
where $\Gamma_{ji}^k$ are the Christoffel symbols of the metric tensor $g$.

\color{black}
Suppose first that $\gamma_{q,-\eta}(t_p)=p$ is a conjugate point of $q$ along $\gamma_{q,-\eta}([0,\tau_{exit}(q,-\eta)])$. Then there is a non-trivial Jacobi field $J$ on $\gamma_{p,\xi}$ that vanishes at $t=0$ and at $t=t_p$. Then 
$$
D_t J(t)\bigg|_{t=0}\neq 0 \hbox{ and }D_t J(t)\bigg|_{t=0} \perp \xi.
$$
Here $D_t$ operator is defined on $\gamma_{p,\xi}(t)$. Define a curve $\sigma(s)=(q(s),\eta(s)) \in H(q,\eta)$ by \eqref{eq:definition_G_1} and \eqref{eq:definition_G_2} with $w:=D_t J(t)\bigg|_{t=0}$. We will show that 
\begin{equation}
\label{eq:char_of_conjugate_point2}
\frac{d}{ds}q(s)\bigg|_{s=0}=0 \hbox{ and } (D_s\eta(s)^T\bigg|_{s=0})^{T}\neq 0.
\end{equation}
Here $D_s$ is defined on $q(s)$. Since the vector $w$ is the velocity of $J$ at $0$ and $J$ is a Jacobi field that vanishes at $0$ and $t_p$ we have by \eqref{eq:definition_G_3} and \eqref{eq:definition_G_4} that
$$
\frac{d}{ds}q(s)\bigg|_{s=0}=\frac{d}{ds}\tau_{exit}(p,w(s))\bigg|_{s=0}\eta.
$$
Since $s \mapsto q(s)$ is a curve on $\p M$ and $\eta\neq 0$ is not tangential to the boundary, it must hold that  
\begin{equation}
\label{eq:der_of_q}
\frac{d}{ds}\tau_{exit}(p,w(s))\bigg|_{s=0}=0. 
\end{equation}
Thus 
\begin{equation}
\label{eq:der_of_q_is_0}
\frac{d}{ds}q(s)\bigg|_{s=0}=0.
\end{equation} 
Recall that the Jacobi field $J$ satisfies due to \eqref{eq:der_of_q}
$$
J(t)=t D\exp_{p}\bigg|_{t\xi} w=\frac{d}{ds} f(t',s)\bigg|_{s=0}, \: t\in [0,t_p], \: t'=\frac{t}{t_p}.
$$ 
We will use the notation $D_{t'}$ for the covariant derivative on the curve $\gamma_{p,t_p\xi}(t')$. Then by \eqref{eq:definition_G_2}--\eqref{eq:definition_G_4}, \eqref{eq:der_of_q}--\eqref{eq:der_of_q_is_0} and Symmetry Lemma (\cite{Lee2}, Lemma 6.3) we have
$$
D_s\eta(s)\bigg|_{s=0}=t_p^{-1}D_s\frac{\p }{\p t'}f(t',s)\bigg|_{t'=1,s=0}=t_p^{-1}D_{t'}\frac{\p }{\p s}f(t',s)\bigg|_{t'=1,s=0}
$$
$$
= t_p^{-1}D_t J(t)\bigg|_{t=t_p}\neq 0,
$$
since $J$ is not a zero field. Therefore, we conclude that $D_s\eta(s)\bigg|_{s=0}\neq 0$.  Moreover since $\|\eta(s)\|_g\equiv 1$ we have $\eta(s)\perp D_s\eta(s)$. Suppose first that $(D_s\eta(s)\bigg|_{s=0})^T= 0$, then $D_s\eta(s)\bigg|_{s=0}$ is normal to the boundary and thus $\eta(0)$ is tangential to the boundary. This is a contradiction since $\eta(0)=\eta$, that is not tangential to the boundary. Thus we conclude that $(D_s\eta(s)\bigg|_{s=0})^T\neq 0$. Write 
$$
\eta(s)=c(s)\nu(q(s))+\eta(s)^T, \hbox{ where $c$ is some smooth function}.
$$
Let $x\mapsto (z^j(x))_{j=1}^n$ be the boundary normal coordinates  near $q$. Here we consider that $z^n(x)$ represents the distance of $x$ to $\p M$. Then the coefficient functions of $\nu$ are $V^k=\delta^n_k$. Therefore for every $k\in \{1,\ldots,n\}$ the observation $\dot{q}(0)=0$ yields
$$
\bigg(D_s \nu(s)\bigg|_{s=0}\bigg)^k=\bigg(\frac{d}{ds} V^k(s)+V^j(s)\dot{q}^i(s)\Gamma_{ji}^k(q(s))\bigg)\bigg|_{s=0}=0,
$$
here $\Gamma_{ji}^k$ are the Christoffel symbols of metric $g$ in boundary normal coordinates. Thus
\begin{equation}
\label{eq:tangential_comp_of_eta}
(D_s \eta(s)\bigg|_{s=0})^{T}=
c(0)D_s \nu(q(s))\bigg|_{s=0}+(D_s \eta(s)^T\bigg|_{s=0})^{T}=(D_s \eta(s)^T\bigg|_{s=0})^{T}.
\end{equation}
Therefore, \eqref{eq:char_of_conjugate_point2} is valid.

\medskip
Next we consider curves $\sigma(s)=(q(s),\eta(s))\in H(q,\eta)$. Notice that for every $\sigma(s)$ there exists $\epsilon>0$ and a smooth function $s\mapsto a(s)\in (0,\infty), \: s\in (-\epsilon, \epsilon)$ such that $a(s)\to t_p$ as $s\to 0$ and 
\begin{equation}
\label{eq:variation Gamma}
\Gamma(s,t):=\exp_{q(s)}(-ta(s)\eta(s)), \: t\in [0,1], \: s\in (-\epsilon, \epsilon)
\end{equation}
is a geodesic variation of $\gamma_{q,-\eta}([0,t_p])$ that satisfies $\Gamma(s,1)=p$. Let us consider $s \mapsto(q(s),-ta(s)\eta(s)), t\in [0,1]$ as a smooth curve on $TN$ and $\exp:TN \to N$. We use a short hand notation $-tW(s):=-ta(s)\eta(s)$. We use coordinates $(z^j,v^j)_{j=1}^n$ for $\pi^{-1}U \subset TM$, where $U$ is the domain of coordinates $(z^i)_{i=1}^n$. Let $(y^i)_{i=1}^n$ be coordinates at $\exp((q(0),-tV(0)$. 
Then the variation field 
\begin{equation}
\label{eq:def_of_variation_field_V}
\begin{array}{ccc}
V(t)&:=&\frac{\p}{\p s}\Gamma(s,t)\bigg|_{s=0}=\bigg(\frac{\p \exp^j }{\p z^i}\dot q^i(0)-t\frac{\p \exp^j }{\p v_i}\dot W^i(0)\bigg)\frac{\p }{\p y^j}
\\
&=&\bigg(\frac{\p \exp^j }{\p z^i}\dot q^i(0)-tD\exp_p\bigg|_{-tW(0)}\dot W(0)\bigg)\frac{\p }{\p y^j},
\end{array}
\end{equation}
is a Jacobi field that vanishes at $t=1$. By \eqref{eq:variation Gamma} we have
$$
V^j(0)=\dot q^j(0).
$$ 
Therefore by \eqref{eq:def_of_variation_field_V} the point $p$ is a conjugate point of $q$ along $\gamma_{q,-\eta}([0,\tau_{exit}(q,-\eta)])$, if there exists a curve $\sigma\in H(q,\eta)$ that satisfies 
$$
\dot q(0)=0 \hbox{ and } \dot W(0)\neq 0.
$$
By the definition \eqref{eq:covariant_der_of_vec_field_on _curve}  of the operator $D_s$  and the vector field $W(s)$ this is equivalent to
\begin{equation}
\label{eq:char_of_conjugate_point}
\dot q(0)=0 \hbox{ and } D_s(a(s)\eta(s))\bigg|_{s=0}\neq 0.
\end{equation}

Suppose that there exists such a curve $\sigma(s)=(q(s),\eta(s))\in H(q,\eta)$ for which \eqref{eq:char_of_conjugate_point} is valid. Since $V$ is a Jacobi field that vanishes at $1$, we have by  \eqref{eq:def_of_variation_field_V},  \eqref{eq:char_of_conjugate_point} and the definition of $\sigma(s)$ that
$$
0=V(1)=-D\exp_{q}\bigg|_{-t_p\eta}\bigg(D_s(a(s)\eta(s))\bigg|_{s=0}\bigg)
$$
$$
=-\dot{a}(0)\xi- D\exp_{q}\bigg|_{-t_p\eta}\bigg(t_pD_s\eta(s)\bigg|_{s=0}\bigg).
$$
Thus $\dot{a}(0)=0$ and by \eqref{eq:char_of_conjugate_point} it holds that $D_s\eta(s)\bigg|_{s=0}\neq 0$, which implies that $(D_s\eta(s)\bigg|_{s=0})^T\neq 0$. Therefore, by similar computations as in \eqref{eq:tangential_comp_of_eta} we have 
\\
$(D_s\eta(s)^T\bigg|_{s=0})^T\neq 0$. We conclude that, if there exists a curve $\sigma \in H(q,\eta)$ such that \eqref{eq:char_of_conjugate_point} holds then, $p$ is a conjugate point to $q$ on $\gamma_{q,-\eta}$ and moreover, 
$$
(D_s \eta(s)^T\bigg|_{s=0})^{T}\neq 0.
$$

To summarize, for a given $r \in R_{\p M}^E(M)$ a vector $(q,\eta) \in K(p)$ is a conjugate direction of $p$ along geodesic $\gamma_{q,-\eta}$ if and only if there exists $\sigma \in H(q,\eta)$ such that \eqref{eq:char_of_conjugate_point2} is valid. Moreover since $s \mapsto q(s)$ is a curve on the boundary $\p M$ we can check the validity of \eqref{eq:char_of_conjugate_point2} for any $\sigma \in H(q,\eta)$ by data \eqref{bigger geodesic measurment data}. Therefore for given $r\in R_{\p M}^E(M)$ the data \eqref{bigger geodesic measurment data} determines the set $K_G(p)$. 

\medskip 
Next we will prove equation \eqref{eq:the_set_of_good_dir_are_the_same} in the case of $p \in M_1$. Let $(q,\eta)\in K(p)$. Suppose that $D \Phi \eta\in K(\Psi(p))$ is not in $K_G(\Psi(p))$. Then there exists a smooth curve $s \mapsto\sigma(s)=(\widetilde q(s),\widetilde \eta(s)) \in \p_- SM_2$ that satisfies the properties (H1)--(H3) and for which \eqref{eq:char_of_conjugate_point2} is valid. 

As it holds that $\Psi$ is a homeomorphism and $\Phi$ is a diffeomorphism that preserves the boundary metric (in the sense of \eqref{eq:Phi_preserves_boundary_inner_product}) the curve 
$$
s \mapsto (\phi^{-1}(\widetilde q(s)), D \Phi^{-1} \widetilde \eta(s)):=(q(s),\eta(s))
$$ 
satisfies also the conditions (H1)--(H3) and moreover 
$$
\dot{q}(0)=0 \hbox{ and }(D_s \eta(s)^T\bigg|_{s=0})^T=D \Phi^{-1} (D_s \widetilde\eta(s)^T\bigg|_{s=0})^T \neq 0.
$$
Thus \eqref{eq:char_of_conjugate_point2} is valid for the curve $(q(s),\eta(s))$, which implies that $(q,\eta) \notin K_G(p)$. Thus \eqref{eq:the_set_of_good_dir_are_the_same} is valid in the case of $p \in M_1$.

\medskip
Suppose then that $p \in \p M$. It follows from the Lemma \ref{Le:minimizing_geo_from_boundary} that the set $K_G(p)$ is not empty and
$$
K_G(p)=\bigg\{(\gamma_{p,\xi}(\tau_{exit}(p,\xi)),\dot \gamma_{p,\xi}(\tau_{exit}(p,\xi)))\in \p_-SM ; \; \xi \in \big(V_c \setminus (I \cup S\p M) \big)\bigg\}.
$$ 
Let $(q,\eta) \in K_G(p)$. Let $\xi \in S_pN$ be the unique vector that satisfies $\eta =\dot \gamma_{p,\xi}(\tau_{exit}(p,\xi)).$ Since $D\exp_p$ is not singular at $\tau_{exit}(p,\xi)\xi$, we can choose an open neigborhood $U$ of $\tau_{exit}(p,\xi)\xi$ such that for every $v \in U$ the differential map $D \exp_p\bigg|_{v}$ is not singular and $\exp_p(U)$ is an open neighborhood of $q$. Since $q \neq p$ it holds that $\tau_{exit}(p,\xi) >0$. Therefore we may assume that $U\cap T\p M = \emptyset$ and $U \subset V_c \setminus I$, since $I$ is closed. 
Thus,
$$
\exp_p(U)\cap \p M=\{\gamma_{p,v}(\tau_{exit}(p,v)); \: v\in h(U)\}\subset \pi(K_G(p)),
$$ 
where $h(v):=\frac{v}{\|v\|_g}$ and also in this case we deduce that $\pi(K_G(p)) \subset \p M$ is open. 

Next we show that $K_G(p)$ is determined by $r$ and data \eqref{bigger geodesic measurment data}, if $r \in R_{\p M}^E(\p M)$. Fix $(q,\eta)\in K(p).$ We note that $q\neq p$ and by Lemma \eqref{LE:2_fundamental_form} the vector $\eta$ is not tangential to the boundary. Let $t_p \in (0,\tau_{exit}(q,-\eta)]$ be such that $\gamma_{q,-\eta}(t_p)=p$. Consider the collection $H'(q,\eta)$ of $C^{\infty}$-curves $\sigma:(-\epsilon,\epsilon)\to \p_- S M, \:\sigma(s)=(q(s),\eta(s))$ that satisfy the following conditions
\begin{enumerate}
\item[(H1')] $\sigma(0)=(q,\eta)$, $q(s) \neq p, \hbox{ for any } s \in (-\epsilon,\epsilon)$
\item[(H2')] $r\in \overline \Sigma(q(s),\eta(s))$ for all $s \in (-\epsilon,\epsilon)$.
\end{enumerate}
Then by similar proof as in the case of $p \in M$ we can show that $H'(q,\eta) \neq \emptyset$, $H'(q,\eta)$ is determined by $r$ and data \eqref{bigger geodesic measurment data} and $(q,\eta)$ is a conjugate direction with respect to $p$ if and only if there exists a curve $\sigma \in H'(q,\eta)$ for which equation \eqref{eq:char_of_conjugate_point2} is valid. 

The equation \eqref{eq:the_set_of_good_dir_are_the_same} is also valid in the case of $p \in \p M_1$ by a similar argument as in the case of $p \in M_1$.
\end{proof}

Now we are ready to consider the map defined in \eqref{eq:n-1,coordinate_map}. Choose $(q,\eta) \in K_G(p)$ and let $t_p<0$ be such that $\exp_q(t_p \eta)=p$. Let $V_q\subset M$ be a neighborhood of $p$. 
Assuming that this neighborhood is small enough, there is a neighborhood $U_q\subset T_qN$ of $t_p \eta$ such that the exponential map 
$$
\exp_q:U_q\to V_q
$$
is a diffeomorphism.

We emphasize that the mapping 
$$
\Theta_q(z)=\frac 1{\|\exp_q^{-1}(z)\|_g} \exp_q^{-1}(z)\in S_qN ,\quad z\in V_q .
$$
depends on the neighborhood $U_q$ of $t_p \eta$.

\begin{Lemma}
\label{Le:ker_of_Dtheta}
Let $p,q \in N, \: p\neq q$. Let $\eta\in S_qN$ and $t>0$ be such that $\exp_q(t\eta)=p$. Suppose that $D\exp_q$ is not singular at $t\eta$ and denote $D\exp_q|_{t\eta}\eta=:\xi \in S_pN$. Then ker$(D\Theta_q(p)) =\hbox{span}(\xi)$.
\end{Lemma}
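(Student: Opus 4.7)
The plan is to factor $\Theta_q$ as the composition $h \circ \exp_q^{-1}$, where $h: T_qN \setminus \{0\} \to S_qN$ is the normalisation map $h(v) = v/\|v\|_g$, and then to compute the kernel via the chain rule. By hypothesis $D\exp_q|_{t\eta}$ is nonsingular, so after shrinking $U_q$ and $V_q$ we may take $\exp_q : U_q \to V_q$ to be a diffeomorphism, and the differential of $\exp_q^{-1}$ at $p$ equals $(D\exp_q|_{t\eta})^{-1}$.

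Next I would identify the kernel of $Dh$ at $t\eta$. Since $h$ is $(+1)$-homogeneous in the radial sense only up to a factor $1/\|v\|_g$, the map $h$ is constant along the ray $\{s\eta : s > 0\}$ (which is mapped to the single point $\eta \in S_qN$). A direct computation gives
\[
Dh|_{v}(w) = \frac{1}{\|v\|_g}\bigl(w - \langle w, v/\|v\|_g\rangle_g \, v/\|v\|_g\bigr),
\]
so $\ker Dh|_v$ is exactly the radial line $\operatorname{span}(v)$. In particular $\ker Dh|_{t\eta} = \operatorname{span}(\eta)$.

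Combining these two facts via the chain rule:
\[
v \in \ker D\Theta_q(p) \iff D\exp_q^{-1}\big|_p(v) \in \ker Dh|_{t\eta} = \operatorname{span}(\eta) \iff v \in D\exp_q|_{t\eta}\bigl(\operatorname{span}(\eta)\bigr).
\]
Since $\exp_q(s\eta) = \gamma_{q,\eta}(s)$, we have $D\exp_q|_{t\eta}(\eta) = \dot\gamma_{q,\eta}(t) = \xi$, hence $D\exp_q|_{t\eta}(\operatorname{span}(\eta)) = \operatorname{span}(\xi)$, giving the desired equality $\ker D\Theta_q(p) = \operatorname{span}(\xi)$.

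There is no real obstacle here: the argument is essentially a chain rule computation together with the observation that the normalisation map kills exactly the radial direction, and that this radial direction corresponds under $\exp_q$ to the velocity of the geodesic $\gamma_{q,\eta}$ at the time $t$ when it reaches $p$. The only point to be slightly careful about is the assumption that $\exp_q$ is a local diffeomorphism at $t\eta$, which is precisely the hypothesis that $D\exp_q|_{t\eta}$ is nonsingular, and the tacit understanding (already used in defining $\Theta_q$) that $U_q$ is chosen small enough to avoid the zero section.
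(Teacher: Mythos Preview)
Your proof is correct and takes essentially the same approach as the paper: both compute $D\Theta_q(p)$ via the chain rule applied to the factorisation $\Theta_q = h\circ\exp_q^{-1}$, arriving at the formula $D\Theta_q(p)v = \tfrac{1}{t}\bigl(D\exp_q^{-1}v - \langle D\exp_q^{-1}v,\eta\rangle_g\,\eta\bigr)$, from which the kernel is read off directly. Your write-up simply makes the factorisation and the identification $D\exp_q|_{t\eta}\eta = \dot\gamma_{q,\eta}(t)=\xi$ more explicit than the paper does.
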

\begin{proof}
Let $v \in T_pN$. Then 
$$
D\Theta_q(p)v=\frac{D\exp_q^{-1}v}{t}-\eta\frac{\langle D\exp_q^{-1}v, \eta \rangle_g}{t},
$$
and the claim follows.
\end{proof}
In the next Lemma we will show that for given $r=R^{E}_{\p M}(p)$ and $(q,\eta) \in K_G(p)$ the data \eqref{bigger geodesic measurment data} determine the map $\Theta_q$.
\begin{Lemma}
\label{Le:n-1_coordinates}
Let $(N,g)$ and $M$ be as in the Theorem \ref{th:main}. If $r \in R_{\p M}^E(\overline M)$ is given and $p\in \overline M$ is the unique point for which $R_{\p M}^E(p)=r$, then for any $(q,\eta) \in K_G(p)$ there exists a neighborhood $V_q$ of $p$ and a neighborhood of $U_q$ of $t_p\eta$, where $\exp_q(t_p\eta)=p$, such that $\exp_q^{-1}:V_q \to U_q$ is well defined. Moreover, the map $\Theta_q:V_q \to S_qN$ is smooth and well defined.

The set $R^E_{\p M}(V_q)$ and the map $\Theta_q \circ (R^E_{\p M})^{-1}:R^E_{\p M}(V_q) \to S_qN $ are determined from the data \eqref{bigger geodesic measurment data} for given $r \in R_{\p M}^E(\overline M)$ and $(q,\eta)\in K_G(p)$.

More precisely, if $(N_i,g_i)$ and $M_i$ are as in Theorem \ref{th:main} such that \eqref{eq:collar_neib} and \eqref{eq:equivalence_of_biger_data} hold. Then for a given $r=R^E_{\p M}(p) \in R_{\p M}^E(\overline{M_1})$ and $(q,\eta)\in K_G(p)$ it holds that
\begin{equation}
\label{eq:coordinates_match}
\Theta_{\phi(q)}(\Psi(z))= D\Phi (\Theta_{q}(z)), \: z \in V_q.
\end{equation}
\end{Lemma}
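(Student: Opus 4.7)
The plan is to combine the inverse function theorem, applied to $\exp_q$ at $t_p\eta$, with a continuity-based procedure that reads $\Theta_q(z)$ off the set $R^E_{\p M}(z)$, and then to derive the equivariance \eqref{eq:coordinates_match} from the commutativity of \eqref{eq:diagram} together with \eqref{eq:the_set_of_good_dir_are_the_same}.

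\emph{Construction of $V_q$, $U_q$ and smoothness of $\Theta_q$.} Let $t_p<0$ be the unique parameter satisfying $\exp_q(t_p\eta)=p$. Since $(q,\eta)\in K_G(p)$, the vector $\eta$ is not a conjugate direction with respect to $p$, so $D\exp_q|_{t_p\eta}$ is an isomorphism. The inverse function theorem then yields open neighborhoods $U_q\subset T_qN$ of $t_p\eta$ and $V_q\subset N$ of $p$ such that $\exp_q\colon U_q\to V_q$ is a diffeomorphism. The inclusion $(q,\eta)\in K(p)$ forces $q\neq p$, hence $\exp_q^{-1}(z)\neq 0$ on $V_q$ after possible shrinking, and therefore $\Theta_q(z)=\exp_q^{-1}(z)/\|\exp_q^{-1}(z)\|_g$ is smooth on $V_q$, with $\Theta_q(p)=-\eta$ because $t_p<0$.

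\emph{Data-theoretic recovery.} For $z\in V_q\cap\overline M$ set $v(z):=\exp_q^{-1}(z)$, $s(z):=\|v(z)\|_g$ and $\bar\eta(z):=v(z)/s(z)=\Theta_q(z)$. Reversing the geodesic $\gamma_{q,\bar\eta(z)}|_{[0,s(z)]}$ produces a geodesic from $z$ to $q$ with terminal velocity $-\bar\eta(z)$; as $z\to p$ its initial data converge in $S\overline M$ to those of $\gamma_{p,\xi_0}$, where $\xi_0\in S_pN$ is the unique direction satisfying $(q,\eta)=(\gamma_{p,\xi_0}(\tau_{exit}(p,\xi_0)),\dot\gamma_{p,\xi_0}(\tau_{exit}(p,\xi_0)))$. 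Continuity of $\tau_{exit}$ (Lemma \ref{Le:exit_time_func_is_cont}) combined with strict convexity of $\p M$ forces, after possible further shrinking of $V_q$, that $s(z)=\tau_{exit}(z,-\dot\gamma_{q,\bar\eta(z)}(s(z)))$, so $(q,-\bar\eta(z))\in R^E_{\p M}(z)$. Given only $r'=R^E_{\p M}(z)$, I extract $\Theta_q(z)$ by selecting those elements of $r'\cap(\{q\}\times S_qN)$ that lie in a small fixed neighborhood of $(q,\eta)$; by local injectivity of $\exp_q$ on $U_q$ (non-conjugacy) and the non-self-intersection clause in the definition of $K(p)$, this collection is a singleton $\{(q,\eta')\}$, and $\Theta_q(z)=-\eta'$. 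Since $(\p M,g|_{\p M})$ and $R^E_{\p M}(\overline M)$ are known from \eqref{bigger geodesic measurment data} (Lemma \ref{Le:extension_from_tangential_data_to_full_data}) and $R^E_{\p M}$ is a homeomorphism (Proposition \ref{pr:R_F is homeo}), this recipe determines $\Theta_q\circ(R^E_{\p M})^{-1}$ on $R^E_{\p M}(V_q\cap\overline M)$.

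\emph{Equivariance and main obstacle.} By \eqref{eq:the_set_of_good_dir_are_the_same}, $(\phi(q),D\Phi\,\eta)\in K_G(\Psi(p))$, so the same recipe applies on $(N_2,g_2)$. The diagram \eqref{eq:diagram} gives $\widehat{D\Phi}(r')=R^E_{\p M_2}(\Psi(z))$, and $\widehat{D\Phi}$ sends $(q,\eta')\in r'$ to $(\phi(q),D\Phi\,\eta')$. Because $D\Phi$ is a smooth bundle isomorphism (see \eqref{eq:big_phi}), the unique $\eta'\in S_qN$ close to $\eta$ maps to the unique $D\Phi\,\eta'\in S_{\phi(q)}N_2$ close to $D\Phi\,\eta$, so the $M_2$-recipe selects $(\phi(q),D\Phi\,\eta')$, giving $\Theta_{\phi(q)}(\Psi(z))=-D\Phi\,\eta'=D\Phi(-\eta')=D\Phi\,\Theta_q(z)$, which is \eqref{eq:coordinates_match}. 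The main obstacle is the combined verification that the reversed geodesic from $z$ to $q$ has its first exit at $q$ and that the element $(q,\eta')\in r'$ near $\eta$ remains isolated on a sufficiently small $V_q$; both require carefully combining strict convexity of $\p M$, continuity of $\tau_{exit}$, and the non-conjugacy of $\eta$.
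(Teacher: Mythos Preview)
Your proposal is correct and follows essentially the same route as the paper: inverse function theorem for the local smooth inverse of $\exp_q$, recovery of $\Theta_q(z)$ by intersecting $R^E_{\p M}(z)$ with a small neighborhood of $(q,\eta)$ in $S_qN$, and equivariance via $\widehat{D\Phi}$ and \eqref{eq:the_set_of_good_dir_are_the_same}. The one place where the paper is more explicit is precisely your ``main obstacle'': to prove that $R^E_{\p M}(z)\cap U$ is a singleton, the paper runs a two-case contradiction (first a sequence $\eta_j\to\eta$ in $R^E_{\p M}(p)\cap S_qN$, then sequences $z_j\to p$ with two distinct $\xi_j^1,\xi_j^2\to\eta$), using boundedness of $\tau_{exit}$ and the non-self-intersection clause of $K(p)$ to force the hitting times to converge to $t_p$, and then invoking local injectivity of $\exp_q$ at $t_p\eta$; your sketch names the right ingredients but does not carry out this limiting argument.
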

\begin{proof}
Assume first that $p \in M$. Let $(q,\eta)\in K_G(p)$, then the existence of sets $V_q$ and $U_q$ follows. The map $\Theta_q$ is well defined and smooth since $\Theta_q=h\circ\exp_q^{-1}$. Here $h:T_qN\setminus \{0\} \to S_qN, \: h(v):= \frac{v}{\|v\|_g}$ is smooth and well defined since $q\neq p$.

\medskip
Next we will show that the set $V_q$ and the map $\Theta_q$ are determined from the data \eqref{bigger geodesic measurment data}, if $(q,\eta)\in K_G(p)$ is given. Let $V \subset M$ be a neighborhood of $p$ and $\widetilde U \subset T_qN$ a neighborhood of $t_p\eta$. Denote $U:=h(\widetilde U) \subset S_qN$. Since $\eta \in U$, we may assume that for all $z\in V$ the set $ R^E_{\p M}(z) \cap U\subset S_qN$ is not empty.  We define a set valued mapping $P_q:V \to 2^{S_qN}$ by formula
$$
P_q(z)= R^E_{\p M}(z) \cap U.
$$
Then $P_q$ is well defined and for given $r$ and $(q,\eta)\in K_G(p)$ we can recover $P_q$ from \eqref{bigger geodesic measurment data}. We claim that, if $V$ and $U$ are small enough, then
\begin{equation}
\label{eq:neihborhood_og_geo}
\hbox{ for all $z \in V$ the set } R^E_{\p M}(z) \cap U\subset S_qN \hbox{ has a cardinality of } 1.
\end{equation}
Therefore, $P_q,$ coincides with $\Theta_q$ in $V.$ We will show that if \eqref{eq:neihborhood_og_geo} is not valid, then we end up in a  contradiction with the assumption $(q,\eta)\in K_G(p)$. We will divide the proof into two parts.

Assume first that there exists a sequence $(\eta_j)_{j=1}^\infty \subset R^E_{\p M}(p) \cap S_qN$ that converges to $\eta$ and $\eta_j \neq \eta$ for any $j \in \N$. Choose $t_j<0$ such that $\exp_q(t_j\eta_j)=p$. Due to \eqref{eq:exit_time_func_is_bdd} we may assume that $t_j$ converges to some $t$. Therefore, by the continuity of the exponential map, we have 
$$
p=\lim_{j\to \infty }\exp_q(t_j\eta_j)=\exp_q(t \eta).
$$
Since $(q,\eta) \in K(p)$ it must hold that $t=t_p$. As $p=\exp_q(t_j\eta_j)$ for every $j \in \N$ and $t_j\eta_j \to t\eta$ in $T_qN$, we end up with a contradiction to the assumption $(q,\eta) \in K_G(p)$.

Suppose next that there exist sequences $(z_j)_{j=1}^\infty\subset \overline{M}$ and $\xi^1_j,\xi_j^2 \in R^E_{\p M}(z_j) \cap S_qN, \: \xi_j ^1 \neq \xi_j^2$ for every $j \in \N$ such that for $i\in \{1,2\}$ holds
$$
z_j \longrightarrow p, \hbox{ and }  \xi_j ^i \longrightarrow \eta \hbox{ as } j \longrightarrow \infty.
$$
Choose $t_j^1\in (-\tau_{exit}(q,-\xi_j^1),0), t_j^2\in (-\tau_{exit}(q,-\xi_j^2),0), \: j \in \N$ such that for $i\in \{1,2\}$, holds $\exp_q(t_j^i\xi_j^i)=z_j$. Again by \eqref{eq:exit_time_func_is_bdd} we may assume that $t_j^i$ converges to $t^i \in (-\tau_{exit}(q,-\xi),0)$ as $j \to \infty$ for $i \in \{1,2\}$, respectively. Therefore, we have 
$$
p=\lim_{j \to \infty}z_j=\lim_{j\to \infty }\exp_q{(t_j^i\xi_j^i)}=\exp_{q}(t^i\eta).
$$
Since $(q,\eta) \in K(p)$ it must hold that $t^i=t_p$ for both $i\in \{1,2\}$. Therefore, $t_j^i\xi_j^i \to t_p\eta$ in $T_qN$ as $j \to \infty$ for both $i\in \{1,2\}$. Since we assumed that $\xi_j^1\neq \xi_j^2$ for any $j \in \N$, we have shown that mapping $\exp_q$ is not one--to--one near $t_p\eta$. Hence we end up again in a contradiction with the assumption $(q,\eta)\in K_G(p).$ 

Thus we have proven the existence of such sets $V$ and $U$ for which \eqref{eq:neihborhood_og_geo} is valid. Using data \eqref{bigger geodesic measurment data} we can verify for a given $r$ and the neighborhoods $U$ of $\eta$ and $R^{E}_{\p M}(V)$ of $r$ whether the property \eqref{eq:neihborhood_og_geo} is valid. We will denote by $U_1 \subset S_qN$ a neighborhood of $\eta $ and $V_q$ a neighborhood of $p$ that satisfy \eqref{eq:neihborhood_og_geo}. 

\medskip
The next step is to prove that for any $r=R_{\p M_1}^E(p)\in R_{\p M_1}^E(M_1)$ and $(q,\eta)\in K_G(p)$ the equation \eqref{eq:coordinates_match} is valid. Choose neighborhoods $U_1 \subset S_qN$ of $\eta$ and $V_q \subset M_1$ of $p$ for which \eqref{eq:neihborhood_og_geo} is valid. By \eqref{eq:the_set_of_good_dir_are_the_same} it holds that $(\phi(q),D\Phi \eta)\in K_G(\Psi(p))$. Since $\Psi$ is a homeomorphism $\Psi(V_q)$ is a neighborhood of $\Psi(p)$ and due to \eqref{eq:Phi_preserves_boundary_inner_product} $D\Phi(U_1)\subset S_{\phi(q)}N_2$ is a neighborhood of $D\Phi \eta$. Therefore \eqref{eq:neihborhood_og_geo} is valid for $\Psi(V_q)$ and $D\Phi(U_1)$. Let $z \in V_q$ and  $\{\xi\}=  R^E_{\p M}(z) \cap U_1$. Then $\{D\Phi \xi\} =R^E_{\p M}(\Psi (z)) \cap D \Phi(U_1)$ and the equation \eqref{eq:coordinates_match} follows.

\medskip 
Finally we assume that $p \in \p M$. We notice that by Lemma \eqref{LE:2_fundamental_form} it holds $K_G(p) \cap T\p M =\emptyset$ and by the definition of $K_G(p)$ we have $p \notin \pi(K_G(p))$. Therefore, we can choose  for every $(q,\eta) \in K_G(p)$ such a neighborhood $V_q\subset \overline{M}$ of $p$ that $q \notin V_q$. The rest of the proof is similar to the case $p \in M$.
\end{proof}

\subsection{Interior coordinates}
For this subsection we assume that $p \in M$. Let $(\widetilde q, \widetilde \eta) \in K_G(p)$ be such that a similar map $\Theta_{\widetilde q}:V_{\tilde q}\to S_{\widetilde q}N$ exists for some neighborhood $V_{\widetilde q}$ of $p$. Let $v\in T_{\widetilde q}N$ and define a smooth map
\ba
\Theta_{q,\widetilde q, v}(z)=(\Theta_q(z),\bra v,\Theta_{\tilde q}(z)\rangle_g )\in S_{q}N\times \R ,\quad z\in V_q\cap V_{\tilde q}.
\ea
See Figure \eqref{fig:int_coord}. 
\begin{figure}[h]
 \begin{picture}(100,150)
  \put(-60,-10){\includegraphics[width=200pt]{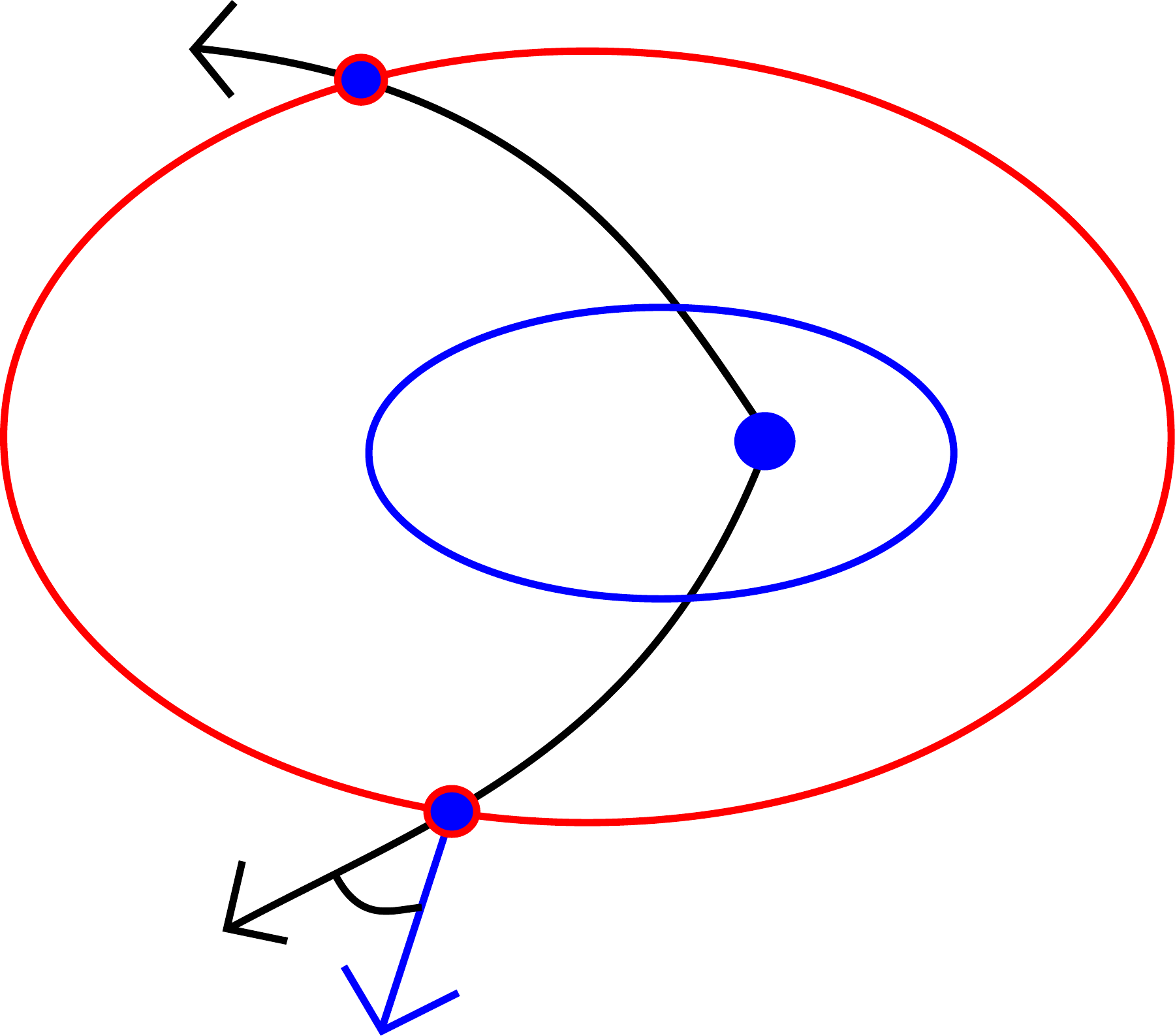}}
  \put(-55,155){$\Theta_q(p)$}
  \put(0,140){$q$}
  \put(14,37){$\widetilde q$}
  \put(20,0){$v$}
  \put(-50,7){$\Theta_{\widetilde q}(p)$}
  \put(-75,70){$\p M$}
  \put(55,90){$p$}
  \put(90,110){$V_q\cap V_{\widetilde q}$}
   \end{picture}
\caption{Here is a schematic picture about the map $\Theta_{q,\widetilde q, v}$ evaluated at point $p\in M$, where the point $p \in M$ is the blue dot and $V_q\cap V_{\widetilde q}$ is the blue ellipse. The higher red\&blue dot is $q$ and the lower is $\widetilde q$. The blue vector is the given direction $v \in T_{\widetilde q}N$.}
\label{fig:int_coord}
\end{figure}

In the next Lemma we will show that this map is a smooth coordinate map near $p$. By Inverse function theorem it holds that a smooth map $f:N \to \R^n$ is a coordinate map near $p \in N$ if and only if $\det Df(p)\neq 0$. Here $Df(p)$ is the Jacobian matrix of $f$ at $p$.

\begin{Lemma}
\label{Le:smooth_atlas}
Let $(N,g)$ and $M$ be as in the Theorem \ref{th:main}. If $r \in R_{\p M}^E(M)$ is given and $p\in M$ is the unique point for which $R_{\p M}^E(p)=r$, then there exist $(q,\eta),(\widetilde q, \widetilde \eta) \in K_G(p)$ and $v\in T_{\widetilde q}N$ such that $\Theta_{q,\widetilde q,v}$ is a smooth coordinate mapping from a neighborhood $V_{q,\tilde q,v}$ of  $p$. 


Moreover, for given $r = R^E_{\p M}(p) \in R_{\p M}^E(M)$ and $(q,\eta),(\widetilde q, \widetilde \eta) \in K_G(p)$ the data \eqref{bigger geodesic measurment data} determines $v\in T_{\tilde q}N$, a neighborhood $R_{\p M}^E(V_{q,\tilde q,v})$  of $r$ and the map $\Theta_{q,\widetilde q,v}\circ (R_{\p M}^E)^{-1}: R_{\p M}^E(V_{q,\tilde q,v}) \to (S_qN \times (\R \setminus \{0\}))$ such that $\Theta_{q,\widetilde q,v}$ a smooth coordinate map from a neighborhood $V_{q,\tilde q,v}$ of $p$ onto some open set $U \subset (S_qN \times (\R \setminus \{0\}))$.

\color{black}

More precisely, let $(N_i,g_i)$ and $M_i$ be as in Theorem \ref{th:main} such that \eqref{eq:collar_neib} and \eqref{eq:equivalence_of_biger_data} hold. Then for any $p \in M_1$ and  $(q,\eta),(\widetilde q, \widetilde \eta) \in K_G(p)$ and $v\in T_{\widetilde q}N$ the following holds: Let $\xi \in S_qN$ be outward pointing and $s\in \R$. Then
\begin{equation}
\label{eq:interior_coordinates_matc}
\Theta_{\phi(q),\phi(\widetilde q),D\Phi v}(\Psi(z))=(D\Phi \xi,s) \hbox{ if and only if } \Theta_{q, \widetilde q,v}(z)=(\xi,s)
\end{equation}
and
\begin{equation}
\label{eq:determinants_interior_match}
\det \;D \Theta_{q, \widetilde q,v}(p)\neq 0 \hbox{ if and only if } \det \; D\Theta_{\phi(q),\phi(\widetilde q),D\Phi v}(\Psi(p)) \neq 0.
\end{equation}
\end{Lemma}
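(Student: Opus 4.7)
My plan is to analyze the differential of $\Theta_{q,\widetilde q,v}$ at $p$ to characterize when it is a local diffeomorphism, and to use this both to establish existence and to verify the isometry compatibility statements. I would first compute the differential at $p$: writing $\xi,\widetilde\xi\in S_pN$ for the velocities at $p$ of the two geodesics coming from $(q,\eta)$ and $(\widetilde q,\widetilde\eta)$, and noting $\Theta_{\widetilde q}(p)=-\widetilde\eta$ (the relevant geodesic parameter is negative), the differential sends $w\in T_pN$ to $(D\Theta_q(p)w,\,\langle v,D\Theta_{\widetilde q}(p)w\rangle_g)\in T_{\Theta_q(p)}S_qN\oplus \R$. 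By Lemma \ref{Le:ker_of_Dtheta}, $\ker D\Theta_q(p)=\operatorname{span}(\xi)$, so this linear map between $n$-dimensional spaces is invertible precisely when (a) $\widetilde\xi$ is not a scalar multiple of $\xi$ (equivalently $D\Theta_{\widetilde q}(p)\xi\neq 0$), and (b) $\langle v,D\Theta_{\widetilde q}(p)\xi\rangle_g\neq 0$. For the image to lie in $S_qN\times(\R\setminus\{0\})$ near $p$ we additionally need (c) $\langle v,\widetilde\eta\rangle_g\neq 0$. For existence, the proof of Lemma \ref{Le:bad_directions} yields an open nonempty set of directions in $S_pN$ arising from $K_G(p)$, which (since $\dim S_pN\geq 1$) must contain some direction not a scalar multiple of $\xi$; picking such $(\widetilde q,\widetilde\eta)$ and taking $v:=\widetilde\eta+D\Theta_{\widetilde q}(p)\xi$ satisfies (b) with value $\|D\Theta_{\widetilde q}(p)\xi\|_g^2>0$ and (c) with value $1$, since $D\Theta_{\widetilde q}(p)\xi\in T_{-\widetilde\eta}S_{\widetilde q}N$ is $g$-orthogonal to $\widetilde\eta$, and the inverse function theorem supplies the chart $V_{q,\widetilde q,v}$.

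For the data-recovery part, condition (a) translates into $\overline\Sigma(q,\eta)\neq\overline\Sigma(\widetilde q,\widetilde\eta)$, a property directly verifiable from the data. Given such a pair, the maps $\Theta_q\circ(R^E_{\p M})^{-1}$ and $\Theta_{\widetilde q}\circ(R^E_{\p M})^{-1}$ are determined on a neighborhood of $r$ by Lemma \ref{Le:n-1_coordinates}. Under (a) the smooth curve $t\mapsto \Theta_{\widetilde q}(\gamma_{q,-\eta}(t))$ has nonzero velocity at $t_p$, so the image $\Theta_{\widetilde q}\circ(R^E_{\p M})^{-1}(\overline\Sigma(q,\eta))$ is, near $-\widetilde\eta$, a smooth embedded arc in $S_{\widetilde q}N$ with a well-defined tangent line $\ell\subset T_{-\widetilde\eta}S_{\widetilde q}N$ that is intrinsic to this subset and hence extractable from the data. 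Since $\widetilde q\in\p M$ is a boundary point, $T_{\widetilde q}N$ is data-accessible; I would then pick any $v\in T_{\widetilde q}N$ with nonzero projection onto both $\ell$ and $\R\widetilde\eta$, satisfying (b) and (c), and the neighborhood $R^E_{\p M}(V_{q,\widetilde q,v})$ of $r$ together with the composed map $\Theta_{q,\widetilde q,v}\circ(R^E_{\p M})^{-1}$ follow automatically.

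Finally, equations \eqref{eq:interior_coordinates_matc} and \eqref{eq:determinants_interior_match} follow from \eqref{eq:coordinates_match} together with the fact that $D\Phi$ is a $g$-isometry on $T_{\widetilde q}N$ at the boundary point $\widetilde q$ (a consequence of the construction of $\Phi$ in boundary normal coordinates together with \eqref{eq:Phi_preserves_boundary_inner_product}). Direct substitution yields $\Theta_{\phi(q),\phi(\widetilde q),D\Phi v}(\Psi(z))=(D\Phi\,\Theta_q(z),\,\langle v,\Theta_{\widetilde q}(z)\rangle_g)$, giving \eqref{eq:interior_coordinates_matc}. For \eqref{eq:determinants_interior_match}, I would parameterize $\Psi(\gamma_{q,-\eta}(t))=\gamma_{\phi(q),-D\Phi\eta}(\widetilde t(t))$ and differentiate the identity $\Theta_{\phi\widetilde q}\circ\Psi=D\Phi\circ\Theta_{\widetilde q}$ along the geodesic; the $M_2$-analogue of (a) holds (by equivalence of $\overline\Sigma$'s through $\widehat{D\Phi}$), so inverting a local immersion shows $\widetilde t$ is smooth with $\widetilde t'(t_p)\neq 0$, and one obtains $D\Phi(D\Theta_{\widetilde q}(p)\xi)=\widetilde t'(t_p)\,D\Theta_{\phi\widetilde q}(\Psi p)\xi'$. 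The isometry property then gives $\langle v,D\Theta_{\widetilde q}(p)\xi\rangle_g=\widetilde t'(t_p)\langle D\Phi v,D\Theta_{\phi\widetilde q}(\Psi p)\xi'\rangle_g$, so the non-singularity of the two differentials is equivalent. The main technical hurdle will be the data-recovery of $v$ — specifically, extracting the tangent line $\ell$ of the image arc from the data alone — which works because $\Theta_{\widetilde q}$ and $\gamma_{q,-\eta}$ are smooth on the given smooth manifold $\overline M$, so the arc inherits a canonical $C^1$-structure intrinsic to its point set.
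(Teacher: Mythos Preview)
Your existence argument (computing $D\Theta_{q,\widetilde q,v}(p)$, identifying the three conditions (a)--(c), and exhibiting an explicit $v$) is essentially the paper's, as is your derivation of \eqref{eq:interior_coordinates_matc} from \eqref{eq:coordinates_match} and \eqref{eq:Phi_preserves_boundary_inner_product}. Your argument for \eqref{eq:determinants_interior_match} via the reparametrization $\widetilde t$ is correct and more explicit than the paper's; the point that $\widetilde t$ is smooth because the $M_2$-arc $s\mapsto \Theta_{\phi(\widetilde q)}(\gamma_{\phi(q),-D\Phi\eta}(s))$ is a local embedding is the right one.

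Where you genuinely diverge from the paper is the data-recovery of an admissible $v$. You extract the tangent line $\ell$ of the embedded arc $\Theta_{\widetilde q}\circ(R^E_{\p M})^{-1}\bigl(\overline\Sigma(q,\eta)\bigr)\subset S_{\widetilde q}N$ at $-\widetilde\eta$ and then choose $v$ with nonzero pairing against both $\ell$ and $\widetilde\eta$. The paper instead bootstraps purely from topology: it defines $B_{\widetilde q}$ as the set of $w\in T_{\widetilde q}N$ for which $\Theta_{q,\widetilde q,w}\circ(R^E_{\p M})^{-1}$ is a local homeomorphism onto an open subset of $\R^n$ (checkable from the already-reconstructed topology), then builds the set $C^2_{\widetilde q}\subset B_{\widetilde q}\times B_{\widetilde q}$ of pairs whose transition map $\Theta_{q,\widetilde q,w}\circ\Theta_{q,\widetilde q,w'}^{-1}$ is a smooth diffeomorphism, and finally characterizes admissible $v$ by a density criterion: $v$ is admissible iff $\{v\}\times A\subset C^2_{\widetilde q}$ for some open dense $A$. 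This machinery also yields \eqref{eq:determinants_interior_match} almost for free, since the definitions of $B_{\widetilde q}$ and $C^2_{\widetilde q}$ are manifestly $\widehat{D\Phi}$-equivariant by \eqref{eq:interior_coordinates_matc}.

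Your route is more direct and conceptually cleaner, but be aware of two details to nail down: the intersection $\overline\Sigma(q,\eta)\cap R^E_{\p M}(V_{\widetilde q})$ may have several components, so you must first isolate the one containing $r$ before reading off the tangent; and you should spell out that the tangent line of a $C^1$-embedded arc in the \emph{known} smooth manifold $S_{\widetilde q}N$ is determined by its underlying point set (e.g.\ via limits of secants). The paper's approach sidesteps both issues by never invoking any smoothness of objects living in the unknown interior, at the price of the indirect $B_{\widetilde q}/C^2_{\widetilde q}$ admissibility test.
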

\begin{proof}
Choose  $(q,\eta)\in K_G(p)$. By Lemma \ref{Le:bad_directions} the set $\pi(K_G(p))$ is open and, there exists $(\widetilde q, \widetilde \eta) \in K_G(p)$, such that 
\begin{equation}
\label{eq:cond_for_q_tilde}
\widetilde q \notin \gamma_{q,-\eta}([0,\tau_{exit}(q,-\eta)]).
\end{equation}
By Lemma \ref{Le:n-1_coordinates} there exists a neighborhood $V_{q}\cap V_{\widetilde q}$ of $p$ such that the mapping $\Theta_{q,\widetilde q,v}$ is well defined and smooth for any $v \in T_{\widetilde q} N$.   

Next we show that there exists such a vector $v\in T_{\widetilde q} N$ that the Jacobian $D\Theta_{q,\widetilde q,v}$ is invertible at $p$. By the choice of $(q,\eta)$ and $(\widetilde q, \widetilde \eta)$ there exist $t,\widetilde  t >0$ and $\xi,\widetilde \xi\in S_pN$ such that $\xi$ and $\widetilde \xi$ are not parallel and  $(\gamma_{p,\xi}(t),\dot\gamma_{p,\xi}(t))=(q,\eta)$ and $(\gamma_{p,\widetilde \xi}(\widetilde t),\dot\gamma_{p,\widetilde \xi}(\widetilde t))=(\widetilde  q,\widetilde\eta)$.
Choose linearly independent vectors $w_1, \ldots, w_{n-1}\in S_qN$ that are all perpendicular to $\eta$. Then vectors 
$$
\xi_i:=D\exp_q|_{-t \eta}w_i \in T_pN, \: i \in \{1, \ldots, n-1\}
$$
are linearly independent and perpendicular to $\xi$. Therefore, the vectors $\xi_1, \ldots, \xi_{n-1},\xi$ span $T_pN$. 
Let $v \in T_{\widetilde q}N$ be such that $\langle v , \Theta_{\widetilde q}(p)\rangle_g \neq 0$ and $\langle v , D\Theta_{\widetilde q}(p)\xi\rangle_g\neq 0$. Notice that by Lemma \ref{Le:ker_of_Dtheta} such a $v$ exists. Choose coordinates $(x^1,\ldots, x^n)$ at $p$ such that
$$
\frac{\partial}{\p x^i}(p)=\xi_i; \: i \in \{1,\ldots, n-1\} \hbox{ and } \frac{\partial}{\p x^n}(p)=\xi.
$$
In these coordinates the Jacobian matrix of $\Theta_{q,\widetilde q,v}$ at $p$ is
\begin{equation}
\label{eq:jacobian}
D\Theta_{q,\widetilde q,v}(p)=\left( \begin{array}{cc}
V & \overline{0}
\\
\overline{a} & c
\end{array}\right), 
\end{equation}
where $V$ is the Jacobian  matrix of $\Theta_q$ at $p$ with respect to $(n-1)$ first coordinates, $\overline a, \overline 0 \in \R^{n-1}$ and $c = \frac{\p}{\p x_{n}} \langle v , \Theta_{\widetilde q}(x)\rangle_g|_{x=p}$. Then by the choice of basis $\xi_1,\ldots, \xi_{n-1}, \xi$ and Lemma \ref{Le:ker_of_Dtheta} it holds that $V$ is invertible. 
Moreover
\begin{equation}
\label{eq:necessary_cond_for_v}
c = \frac{\p}{\p x^{n}} \langle v , \Theta_{\widetilde q}(x)\rangle_g\bigg|_{x=p}=\frac{d}{dt} \langle v , \Theta_{\widetilde q}(\gamma_{p,\xi}(t))\rangle_g\bigg|_{t=0}=\langle v , D\Theta_{\widetilde q}(p)\xi\rangle_g\neq 0,
\end{equation}
and we conclude that
$$
\det D\Theta_{q,\widetilde q,v}(p)=(\pm 1) c\;\det V\neq 0.
$$
This shows that there exist $(q,\eta),(\widetilde q, \widetilde \eta) \in K_G(p)$ and $v\in T_{\widetilde q} N$ such that the corresponding map $\Theta_{q,\widetilde q,v}$ is a coordinate map in some neighborhood of $p$. Since the invertibility of the Jacobian $D\Theta_{q,\widetilde q,v}(p)$ is invariant to the choice coordinates, we conclude that the map $\Theta_{q,\widetilde q,v}$ is a smooth coordinate map at $p$.

\medskip 
Now we will check that for a given $r=R^E_{\p M}(p)$ and $(q,\eta),(\widetilde q, \widetilde \eta) \in K_G(p)$ the data \eqref{bigger geodesic measurment data} determines $v\in T_{\tilde q}N$, a neighborhood $R_{\p M}^E(V_{q,\tilde q,v})$ of $r$ and the map $\Theta_{q,\widetilde q,v} \circ (R_{\p M}^E)^{-1}:R_{\p M}^E(V_{q,\tilde q,v})\to (S_qN \times \R)$. By Lemma \ref{Le:bad_directions}, the data \eqref{bigger geodesic measurment data} determines the set $K_G(p)$ and it is not empty. Choose  any $(q,\eta)\in K_G(p)$. By equation \eqref{eq:image_of_open_geo_1} we can choose $(\widetilde q,\widetilde \eta) \in K_G(p)$ such that \eqref{eq:cond_for_q_tilde} is valid. By Lemmas \ref{Le:extension_from_tangential_data_to_full_data} and \ref{Le:n-1_coordinates} we can construct the neighborhoods $V_q$, $V_{\widetilde q}$ of $p$ and the map $\Theta_{q,\widetilde q,v}:V_{q,\widetilde q}\to S_qN \times \R$, where $V_{q,\widetilde q}:=V_{ q}\cap V_{\widetilde q}$, for any $v \in T_{\widetilde q}N$. Suppose now on that the points $(q,\eta)$ and $(\widetilde q, \widetilde \eta)$ are given. By \eqref{eq:jacobian} and \eqref{eq:necessary_cond_for_v} we notice that a sufficient and necessary condition for the map $\Theta_{q,\widetilde q,v}$ to be a smooth coordinate map at $p$ is that vector $v$  is contained in $A_{\widetilde q}:=T_{\widetilde q}N \setminus ((D\Theta_{\widetilde q}(p)\xi)^\perp\cup \Theta_{\widetilde q}(p)^\perp)$ that is open and dense.

Lastly we will introduce a method to test, if $v \in T_{\widetilde q}N$ is admissible, that is, $\det D\Theta_{q,\widetilde q,v}(p)\neq 0$. By the Propostion \ref{pr:R_F is homeo} the map $R_{\p M} ^E:\overline{M} \to R_{\p M} ^E(\overline{M})$ is a homeomorphism. Thus we can determine the set 
$$
\begin{array}{lll}
B_{\widetilde q}&=&\{w \in T_{\widetilde q}N; \hbox{the map } \Theta_{q,\widetilde q,w}\circ (R_{\p M} ^E)^{-1} \hbox{ is a homeomorphism}
\\
&& \hbox{ from a neighborhood of }R_{\p M} ^E(p) \hbox{ onto some open set }V \subset \R^n\}.
\end{array}
$$
For all $w,w' \in B_{\widetilde q}$ we define a homeomorphism
$$
H_{w,w'}:U_{w,w'} \to V_{w,w'}, \quad H_{w,w'}:=\Theta_{q,\widetilde q,w}\circ \Theta_{q,\widetilde q,w'}^{-1},
$$
where $U_{w,w'}, V_{w,w'} \subset \R^n$ are open. Let 
$$
C^2_{\widetilde q}:=\{(w,w')\in B_{\widetilde q} \times B_{\widetilde q}; \: H_{w,w'} \hbox{ is smooth and } \det (D H_{w,w'}(\Theta_{q,\widetilde q,w}(p))\neq 0)\}.
$$
Thus $A_{\widetilde q} \times A_{\widetilde q} \subset  C^2_{\widetilde q}$ and the data \eqref{bigger geodesic measurment data}, with $R^{E}_{\p M}(p),(q,\eta)$ and $(\widetilde q, \widetilde \eta)$ determine $C^2_{\widetilde q}$. Let $(w,w')\in C^2_{\widetilde q}$. Then by the Inverse function theorem also $(w',w)$ is included in $C^2_{\widetilde q}$. Thus  
\begin{equation}
\label{eq:side_sets_are_empty}
((T_{\widetilde q}N \setminus A_{\widetilde q} ) \times A_{\widetilde q}  )\cap C^2_{\widetilde q}=\emptyset.
\end{equation}
Write $C_{\widetilde q}$ for the projection of $C^2_{\widetilde q}$ into $T_{\widetilde q}N$ with respect to first component. 

Choose $v \in C_{\widetilde q}$. We claim that $v$ is admissible if and only if there exists an open and dense set $A \subset T_{\widetilde q}N$ such that
\begin{equation}
\label{eq:admissibility_test}
\{v\}\times A \subset C^2_{\widetilde q}.
\end{equation}
If $v$ is admissible, then for every $w \in A_{\widetilde q}$ holds $(v,w)\in C^2_{\widetilde q}$. Therefore, $\{v\}\times A_{\widetilde q} \subset  C^2_{\widetilde q}$ and \eqref{eq:admissibility_test} follows. Suppose then that \eqref{eq:admissibility_test} is valid for some open and dense set $A$. Then $A\cap A_{\widetilde q}$ is also open, dense and $\{v\}\times(A\cap A_{\widetilde q})\subset C^2_{\widetilde q}$. Suppose that $v \in T_{\widetilde q}N \setminus A_{\widetilde q}$. Then it follows that
$$
(\{v\}\times(A\cap A_{\widetilde q}))\cap C^2_{\widetilde q} \neq \emptyset.
$$ 
But this contradicts \eqref{eq:side_sets_are_empty} and it must hold that $v \in A_{\widetilde q}$, which means that $v$ is admissible. 

If $v, w \in T_{\widetilde q}N, \: v \neq w$ are chosen and they both are admissible, then the map $H_{v,w}: U_{v,w} \to V_{v,w}$ is determined by $r$, $(q,\eta), (\widetilde q, \widetilde \eta) \in K_G(p)$ and data \eqref{bigger geodesic measurment data}. Let $U'_{v,w} \subset U_{v,w}$ be an open neighborhood of $\Theta_{q,\widetilde q,w}(p)$ such that $\det DH_{v,w}(z)\neq 0$ for every $z \in U'_{v,w}$. We conclude that a neighborhood $V_{q,\widetilde q, v} \subset V_{q,\widetilde q}$ of $p$ such that the map $\Theta_{q,\widetilde q,v}:V_{q,\widetilde q, v} \to S_qN \times \R$ is a smooth coordinate map, can be defined for instance by formula
$$
V_{q,\widetilde q, v}:=\Theta_{q,\widetilde q, v}^{-1}(U'_{v,w}).
$$
\color{black}

Finally we will prove the equations  \eqref{eq:interior_coordinates_matc} and  \eqref{eq:determinants_interior_match}. Let $p \in M_1$. By \eqref{eq:geodesics_are_the_same} and \eqref{eq:the_set_of_good_dir_are_the_same}  we have that $(q,\eta),(\widetilde q, \widetilde \eta) \in K_{G}(p)$ and \eqref{eq:cond_for_q_tilde} holds if and only if the same is true for $(\phi (q),D \Phi \eta),(\phi(\widetilde q), D\Phi \widetilde \eta)$. If  $(q,\eta),(\widetilde q, \widetilde \eta) \in K_{G}(p)$ and \eqref{eq:cond_for_q_tilde} holds then by \eqref{eq:Phi_preserves_boundary_inner_product} and \eqref{eq:coordinates_match} the equation \eqref{eq:interior_coordinates_matc} follows. 

Lastly we notice that $B_{\phi(\widetilde q)}=D \Phi (B_{\widetilde q})$. Therefore, we conclude that for given $(q,\eta),(\widetilde q, \widetilde \eta) \in K_{G}(p)$ for which \eqref{eq:cond_for_q_tilde} is valid the equation \eqref{eq:necessary_cond_for_v} holds for $v \in T_{\widetilde q}N_1$ if and only if the same holds for  $D \Phi v \in T_{\phi (\widetilde q)}N_2$. Therefore, the equation \eqref{eq:determinants_interior_match} follows from \eqref{eq:interior_coordinates_matc}. 
\color{black}
\end{proof}

\color{black}
\subsection{Boundary coordinates}

Next we construct the coordinates near $\p M$. Let $p \in \p M$. Let $I$ be the set  of self-intersection directions of $p$ (see \eqref{eq:self_intersection_directions}). By Lemma \ref{Le:non_intersection_directions} the set $S_pN \setminus I$ is open and dense in $S_pN$. As $p \in \p M$ the data \eqref{bigger geodesic measurment data} determines the set $I$, since it holds that
$$
I=\{(p,\xi) \in \p_-S_pM: \hbox{ there exists } \eta \in \p_-S_pM, \: \eta \neq \xi \hbox{ such that } \overline \Sigma(p,\xi)=\overline \Sigma(p,\eta)\}.
$$
 
Note that the usual boundary normal coordinates might not work well with our data, since it might happen that $p$ is a self-intersecting point of the boundary normal geodesic $\gamma_{p,\nu}$. In this case it is difficult to reconstruct the map $U \ni w \mapsto z(w) \in \p M$, from the data \eqref{bigger geodesic measurment data}. Here $U$ is the domain of boundary normal coordinates and $z(w)$ is the closest boundary point. To remedy this issue, we will prove in the next lemma that any non-vanishing, non-tangential, inward pointing vectorfield on $\p M$ determines a similar coordinate system as the boundary normal coordinates.

\begin{Lemma}
\label{Le:coordinates_by_F_V}
Let $(\overline{M}, g)$ be a smooth compact manifold with strictly convex  boundary and let $W$ be a non-vanishing, non-tangential and inward pointing smooth vector field on $\p M$. Let $p \in \p M$.  Then there exists $T>0$  such that the map
\begin{equation}
\label{eq:the_diffeo_F_V}
E_W: \p M  \times [0,T) \to \overline{M}, \: E_W(z,t)=\exp_{z}(tW(z))
\end{equation}
is well defined and moreover, there exists $T_p \in (0,T)$ and $\epsilon>0$ such that the restriction 
$$
E_W: [0,T_p) \times B_{\p M}(p,\epsilon) \to \overline{M}
$$
is a diffeomorphism. Here 
$$
B_{\p M}(p,\epsilon):=\{z \in \p M: d_{\p M}(p,z) < \epsilon\}
$$
and $d_{\p M}(p,z)$ is the distance from $p$ to $z$ along the boundary. 
\end{Lemma}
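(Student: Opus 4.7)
The plan is to first establish well-definedness of $E_W$ on a uniform strip $\partial M \times [0, T)$, then apply the inverse function theorem at the boundary point $(p, 0)$ after a smooth extension of $W$ off $\partial M$.

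For the uniform $T$, I would normalize by setting $\widetilde W(z) := W(z)/\|W(z)\|_g$, which lies in the interior of $\partial_+ S_z M$ since $W$ is non-vanishing, non-tangential and inward pointing. Lemma \ref{Le:exit_time_func_is_cont} gives continuity of $\tau_{exit}$ on $\partial_+ SM$, and strict convexity of $\partial M$ (via Lemma \ref{LE:2_fundamental_form}) together with Corollary \ref{Co:interior_of_geo} yields $\tau_{exit}(z, \widetilde W(z)) > 0$ along with $\exp_z(s \widetilde W(z)) \in M$ for $s \in (0, \tau_{exit}(z, \widetilde W(z)))$. Compactness of $\partial M$ then gives a uniform positive lower bound on the continuous positive function $z \mapsto \tau_{exit}(z, \widetilde W(z))/\|W(z)\|_g$, which I would take as $T$, so that $E_W : \partial M \times [0, T) \to \overline M$ is well defined.

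Next, to produce the local diffeomorphism near $p$, I would extend $W$ to a smooth vector field $\widehat W$ on an open tubular neighborhood $U \subset N$ of $\partial M$ and consider the smooth map $\widehat E(x, t) := \exp_x(t \widehat W(x))$ defined on $U \times (-\delta, \delta)$ for some small $\delta > 0$; then $E_W$ is the restriction of $\widehat E$ to $\partial M \times [0, T)$. In coordinates $(y^1, \dots, y^{n-1})$ on $\partial M$ near $p$ augmented by the variable $t$, the identity $\widehat E(z, 0) = z$ forces $\partial_{y^i} \widehat E|_{(p,0)} = \partial_{y^i}|_p \in T_p \partial M$, while $\partial_t \widehat E|_{(p, 0)} = W(p)$ is transverse to $T_p \partial M$ by the non-tangency hypothesis. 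Therefore $D \widehat E(p, 0)$ is a linear isomorphism $T_p \partial M \oplus \mathbb R \to T_p N$.

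The inverse function theorem then yields a product neighborhood $B_{\partial M}(p, \epsilon) \times (-T_p', T_p') \subset U \times (-\delta, \delta)$ on which $\widehat E$ is a diffeomorphism onto an open subset of $N$; restricting to $t \in [0, T_p)$ with $T_p := \min\{T_p', T\}$ gives the desired diffeomorphism $E_W : B_{\partial M}(p, \epsilon) \times [0, T_p) \to \overline M$ onto an open neighborhood of $p$ in $\overline M$. I expect the only mild technicality is the manifold-with-boundary status of the domain at $t = 0$; the tubular extension sidesteps this by reducing everything to the standard interior inverse function theorem, and Corollary \ref{Co:interior_of_geo} guarantees that the positive-$t$ image lies in $M$.
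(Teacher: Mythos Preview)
Your proposal is correct and follows essentially the same approach as the paper: obtain a uniform $T$ from compactness of $\partial M$ together with continuity and positivity of the exit time for strictly inward directions, then check that $D E_W(p,0)$ sends $(\partial_{y^1},\dots,\partial_{y^{n-1}},\partial_t)$ to $(\partial_{y^1}|_p,\dots,\partial_{y^{n-1}}|_p,W(p))$, which is a basis of $T_pN$ since $W(p)\notin T_p\partial M$. Your version is slightly more careful in two places---normalizing $W$ before invoking Lemma~\ref{Le:exit_time_func_is_cont} (which is stated for the unit bundle), and extending $W$ off $\partial M$ so that the ordinary inverse function theorem applies at the corner point $(p,0)$---but the core argument is identical to the paper's.
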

\begin{proof}
Since $W(z) \in (\p_+ SM)^{int}$ for every $z \in \p M$ and  $\p M$ is compact, the Lemma \ref{Le:exit_time_func_is_cont}  guarantees that there exists $T>0$ such that for every $t \in [0,T]$ and $z \in \p M$ it holds that $E_W(t,p)\in \overline{M}$.

Let $(z^j)_{j=1}^{n-1}$ be local coordinates at $p$ on the boundary.  Since the map $E_W$ is smooth it suffices to show that the Jacobian matrix $DE_W$ is invertible at $(p,0)$. 
By straightforward computation we have
\begin{equation}
\label{eq:rows_of_JF_V}
\frac{\p }{\p t}E_W(p,t)\bigg|_{t=0}=W(p) \hbox{ and } \frac{\p }{\p z^j}E_W(z,0)\bigg|_{z=p}=  \frac{\p }{\p z^j}, \: j \in \{1,\ldots, n-1\}.
\end{equation}
Since $W(p)$ is not tangential to the boundary, it holds by \eqref{eq:rows_of_JF_V} that $DE_W$ at $(p,0)$ is invertible.
\end{proof}

Choose a non-vanishing, non-tangential inward pointing smooth vector field $W$ on $\p M$ such that $W(p)\notin I=I_p$. 
By Lemma \ref{Le:coordinates_by_F_V} there exists such a neighborhood $V$ of $p$ such that the map  $E_W^{-1}:V \to \p M \times [0,T_p)$ is a smooth coordinate map. Let $w \in V$. We denote 
$$
\Pi_{W}(w):=z\in \p M \hbox{ if and only if $E_W(z,t)=w$ for some unique } t\in[0,T_p).
$$
See Figure \eqref{fig:self_intersect}. We note that the map $\Pi_W$ is smooth.

\begin{figure}[h]
 \begin{picture}(100,150)
  \put(-95,-10){\includegraphics[width=300pt]{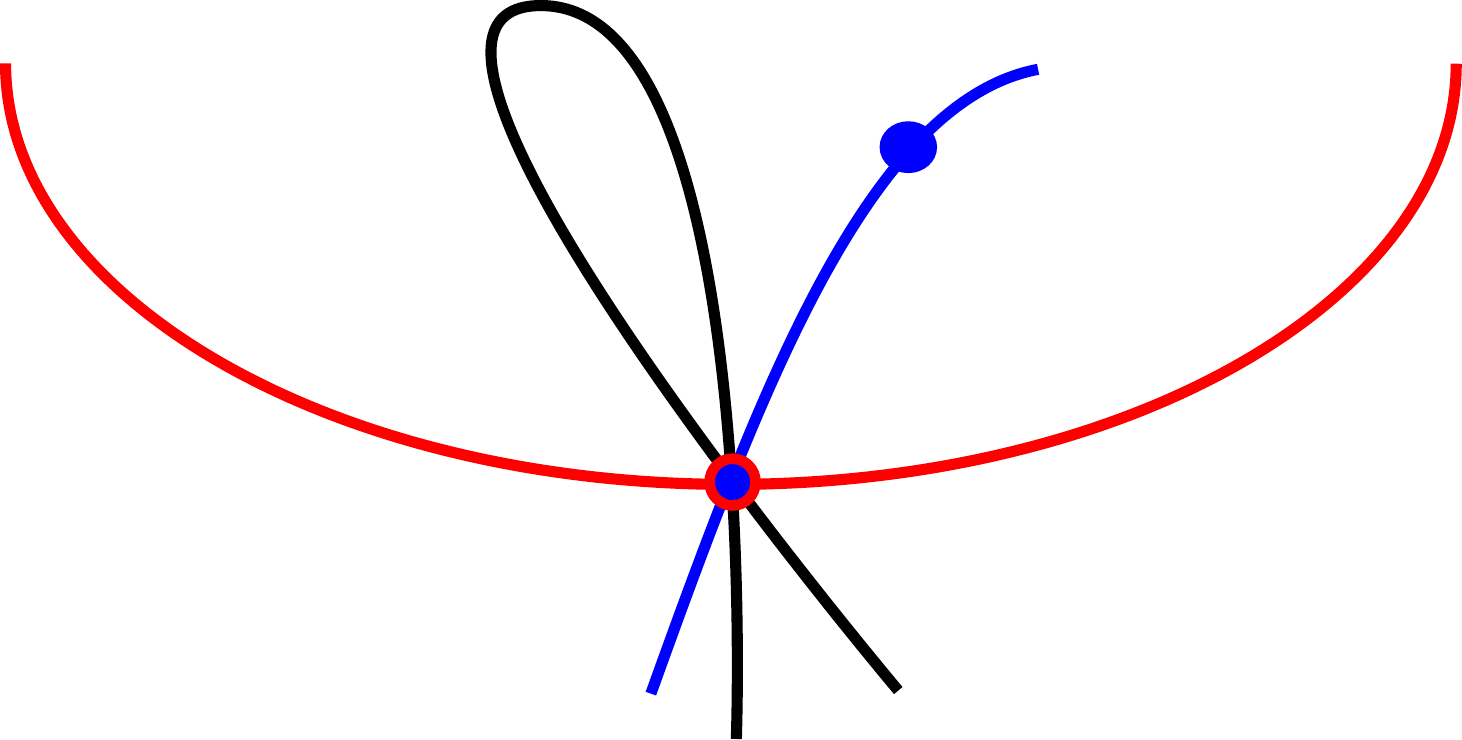}}
  \put(100,105){$w$}
  \put(120,125){$\gamma_{p,W(p)}$}
  \put(-15,140){$\gamma_{p,\nu}$}
  \put(-35,40){$\p M$}
  \put(42,33){$p$}
   \end{picture}
\caption{Here is a schematic picture about the situation where the boundary normal geodesic $\gamma_{p,\nu}$ (the black curve) is self-intersecting at $p \in \p M$ (red\&blue dot). Here the blue curve is the geodesic $\gamma_{p,W(p)}$, where $W(p)\notin I_p$. For the point $w \in M$ (blue dot) the point $p$ satisfies $p=\Pi_W(w)$.}
\label{fig:self_intersect}
\end{figure}

In the next Lemma we show that the data $\eqref{bigger geodesic measurment data}$ determines a continuous map $\Pi_W^E$ that coincides with $\Pi_W$ near $p$. Later we will use the map $\Pi_W^E$ to construct $(n-1)$-coordinates for points near $p$.  
\begin{Lemma}
\label{Le:good_neigborhood_for_boundary_point}
Let $(N,g)$ and $M$ be as in the Theorem \ref{th:main}. For given $r \in R_{\p M}^E( \p M)$ where $p\in \p M$ is the unique point for which $R_{\p M}^E(p)=r$ and let $W$ be a non-vanishing, non-tangential inward pointing smooth vector field such that $W(p)\notin I$,  the data \eqref{bigger geodesic measurment data} with $r$ and $W$,  determine such a $(q,\eta) \in K_G(p)$ and a neighborhood $V_{q,W} \subset \overline M$ of $p$, $q \notin V_{q,W}$ for which the map $\Theta_q:V_{q,W} \to S_qN$ is well defined. Moreover, 

\begin{equation}
\label{eq:defining_property_for_good_neigborhood_for_boundary_point2}
\dot \gamma_{q,-\Theta_q(p)}(\tau_{exit}(q,-\Theta_q(p))) \nparallel W(p)
\end{equation}
and
\begin{equation}
\label{eq:orthogonal_projection_to_boundary}
\hbox{a map }\Pi^E_{q,W}:V_{q,W}\to \p M\cap V_{q,W}, \: \hbox{ is determined by \eqref{bigger geodesic measurment data}}.
\end{equation}
Here $\Pi^E_{q,W}$ is a continuous map which coincides with $\Pi_W$ near $p$.

More precisely, let $(N_i,g_i)$ and $M_i$ be as in Theorem \ref{th:main} and suppose that \eqref{eq:collar_neib} and \eqref{eq:equivalence_of_biger_data} hold. 
Let $W$ be a smooth non-vanishing, non-tangential, inward pointing vector field on $\p M_1$ such that $W(p)\notin I$.
If 
\begin{itemize}
\item[(i)] $r=R^{E}_{\p M_1}(p)\in R^{E}_{\p M_1}(\p M_1)$
\item[(ii)] $(q,\eta) \in K_G(p)$ 
\item[(iii)] $V_{q,W}$ is neighborhood of $p$ such that $q \notin V_{q,W}$ and for every $z \in V_{q,W}$ the property \eqref{eq:coordinates_match} holds.
\end{itemize}
are given. Then 
\begin{equation}
\label{eq:defining_property_for_good_neigborhood_for_boundary_point3}
\begin{array}{ll}
& \dot \gamma_{q,-\Theta_q(p)}(\tau_{exit}(q,-\Theta_q(p))) \nparallel W(p)
\\
\hbox{if and only if}&
\\
& \dot \gamma_{\phi (q),-\Theta_{\phi(q)}(\phi(p)))}(\tau_{exit}(\phi (q),-\Theta_{\phi (q)}(\phi(p)))) \nparallel D\Phi W(\phi(p)),
\end{array}
\end{equation}
and
\begin{eqnarray}
\label{eq:the_boundary_points_are_the_same}
\phi ( \Pi^E_{q,W}(z))= \Pi^E_{\phi(q), D \Phi W}(\Psi(z)), \: \hbox{ for all } z \in V_{q,W}.
\end{eqnarray}
\end{Lemma}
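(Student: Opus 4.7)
The plan is to use Lemmas \ref{Le:bad_directions} and \ref{Le:n-1_coordinates} to recover $K_G(p)$ and the maps $\Theta_q$ from the data, select a suitable $(q,\eta)$, construct the neighborhood $V_{q,W}$ by combining the domains of $\Theta_q$ and of the $W$-collar produced by Lemma \ref{Le:coordinates_by_F_V}, and finally build $\Pi^E_{q,W}$ using the scattering data and the vector field $W$.

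First, I would recover $K_G(p)$ via Lemma \ref{Le:bad_directions}, and for every $(q,\eta)\in K_G(p)$ recover $\Theta_q$ by Lemma \ref{Le:n-1_coordinates}. The non-parallelism condition \eqref{eq:defining_property_for_good_neigborhood_for_boundary_point2} is then an open condition on $(q,\eta)$, verifiable from the data since $\Theta_q(p)$ is data-recoverable and $W$ is given. Non-emptiness follows from the openness of $\pi(K_G(p))$ in $\partial M$ (Lemma \ref{Le:bad_directions}) together with the observation that the "bad" choices---those for which the initial velocity at $p$ of the geodesic to $q$ is aligned with $W(p)$---form a lower-dimensional subset. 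I pick any good $(q,\eta)$ and let $V_{q,W}$ be an open neighborhood of $p$ contained in the $V_q$ from Lemma \ref{Le:n-1_coordinates}, small enough to exclude $q$ and to lie in the tubular $W$-collar $E_W(\partial M\times[0,T_p))$.

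The construction of $\Pi^E_{q,W}$ rests on the following geometric observation: for $w\in V_{q,W}$, the unique $z=\Pi_W(w)\in\partial M$ close to $p$ satisfies $w=\exp_z(tW(z))$, and reversing the geodesic shows $(z,-W(z))\in R^E_{\partial M}(w)$. Since $W$ and $\partial M$ are known, the set $\{(z,-W(z)):z\in\partial M\}\subset\partial_- SM$ is known; intersecting it with $R^E_{\partial M}(w)$ and selecting the $z$ closest to $p$ in the boundary metric (which has already been recovered in Lemma \ref{Le:complete:scattering_data}) yields $\Pi^E_{q,W}(w)$. After shrinking $V_{q,W}$ further if necessary, and using the hypothesis $W(p)\notin I$ together with Lemma \ref{Le:coordinates_by_F_V}, this $z$ is unique and coincides with $\Pi_W(w)$. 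Continuity of $\Pi^E_{q,W}$ is inherited from that of $E_W$.

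Finally, the equivariance \eqref{eq:defining_property_for_good_neigborhood_for_boundary_point3}--\eqref{eq:the_boundary_points_are_the_same} follows from the analogous equivariance properties of $\Theta_q$ and $R^E_{\partial M}$ in Lemmas \ref{Le:n-1_coordinates} and \ref{Le:extension_from_tangential_data_to_full_data}, together with the fact that $\Phi$ restricts to $\phi$ on the boundary and transports $W$ to $D\Phi\circ W\circ\phi^{-1}$. The main obstacle I foresee is the uniqueness step in defining $\Pi^E_{q,W}$ purely from the data: a priori there may be several $z\in\partial M$ with $(z,-W(z))\in R^E_{\partial M}(w)$ coming from distinct $W$-geodesics that happen to pass through $w$, so the argument must carefully combine Lemma \ref{Le:coordinates_by_F_V} with the hypothesis $W(p)\notin I$ to single out, in a purely data-driven manner, the correct boundary point close to $p$.
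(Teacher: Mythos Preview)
Your overall strategy is sound and your sketch for $\Pi^E_{q,W}$ can be made to work, but it differs from the paper's proof and has two points that need tightening.

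\textbf{On verifying \eqref{eq:defining_property_for_good_neigborhood_for_boundary_point2}.} You write that the condition is ``verifiable from the data since $\Theta_q(p)$ is data-recoverable.'' But $\Theta_q(p)$ lives in $S_qN$, whereas \eqref{eq:defining_property_for_good_neigborhood_for_boundary_point2} compares a vector in $S_pN$ with $W(p)$. You must explain how to read off the velocity at $p$ from the data. The paper does this explicitly: since $p\in\partial M$, for each nearby boundary point $z$ one forms the set $A(r,r')=\{\xi\in S_zN\cap r:\overline\Sigma(z,\xi)\subset R^E_{\partial M}(U),\,(z,\xi)\in K_G(p)\}$, shows it is a singleton for $z$ close to $p$ (using Lemma~\ref{Le:minimizing_geo_from_boundary}), and then determines the corresponding $v_z\in S_pN$ via the identity $\overline\Sigma(z,\Theta_z(p))=\overline\Sigma(p,v_z)$. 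This step genuinely uses that $p$ is a boundary point; your sketch elides it.

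\textbf{On constructing $\Pi^E_{q,W}$.} Your ``closest boundary point'' selection is a genuinely different route from the paper. The paper does not minimise boundary distance; instead it builds, for each $w\in\partial M$ near $p$, the connected component $C(w)$ of $\overline\Sigma(w,W)\cap R^E_{\partial M}(V)$ containing $R^E_{\partial M}(w)$, and then verifies three topological properties (P1)--(P3): each $C(w)$ is homeomorphic to $[0,1)$, distinct $C(w)$'s are disjoint, and their union is open. The map is then $\Pi^E_{q,W}(z)=w$ iff $R^E_{\partial M}(z)\in C(w)$. This connected-component approach has the advantage that the defining conditions (P1)--(P3) are directly checkable from the data and manifestly $\widehat{D\Phi}$-equivariant, which gives \eqref{eq:the_boundary_points_are_the_same} immediately.

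Your approach can also be pushed through: if a sequence $w_j\to p$ admitted competing boundary points $z_j\neq\Pi_W(w_j)$ with $d_{\partial M}(z_j,p)\le d_{\partial M}(\Pi_W(w_j),p)\to 0$ and $w_j=\gamma_{z_j,W/\|W\|}(t_j)$, then either $t_j\to 0$ (contradicting local injectivity of $E_W$ from Lemma~\ref{Le:coordinates_by_F_V}) or along a subsequence $t_j\to t_*>0$, forcing $\gamma_{p,W(p)/\|W(p)\|}(t_*)=p$ and hence $W(p)\in I$, a contradiction. You correctly identified this as the main obstacle; the argument above is exactly what is needed. Note, however, that ``closest to $p$'' is a less canonical selection rule than the paper's, and proving the equivariance \eqref{eq:the_boundary_points_are_the_same} with your definition requires knowing that $\phi$ is a boundary isometry (Lemma~\ref{Le:complete:scattering_data}) so that ``closest'' is preserved, whereas the paper's component-based definition transports under any homeomorphism.
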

\begin{proof}
We start by showing how property \eqref{eq:defining_property_for_good_neigborhood_for_boundary_point2} can be verified. Let $R_{\p M}^E(U) \subset R_{\p M}^E(\overline{M})$ be a neighborhood of $r=R_{\p M}^E(p)$. For every  $r' \in (R_{\p M}^E(U)  \cap R_{\p M}^E(\p M)), \: r' \neq r$ we define the number $N(r')$ to be the cardinality of the set 
$$
A(r,r'):=\{\xi \in (S_zN) \cap r; \: \overline{\Sigma}(z,\xi) \subset R^{E}_{\p M}(U), \: (z,\xi) \in K_{G}(p)\},
$$
here $z=(R^E_{\p M})^{-1}(r')$. By Lemma \ref{Le:minimizing_geo_from_boundary} there exists a neighborhood $R_{\p M}^E(U') \subset R_{\p M}^E(U) $ of $r$ such that $N(r')=1$ for every $ r' \in (R_{\p M}^E(U')  \cap R_{\p M}^E(\p M)), \: r'\neq r$. Since the sets $A(r,r')$ are determined by $r$ and \eqref{bigger geodesic measurment data} we can find the set $R_{\p M}^E(U')$. We denote by  $U' \subset U$ the unique neighborhoods of $p$ that are related to $R_{\p M}^E(U')$ and  $R_{\p M}^E(U)$. We write $z=(R^{E}_{\p M})^{-1}(r') \in U'$, for given $r' \in R_{\p M}^E(U')$. Therefore, by Lemma \ref{Le:n-1_coordinates} for the vector $\xi \in A(r,r')$, there exists a neighborhood $V_z\subset U'$ of $p$ such that $z \notin V_z$ and the map $\Theta_z:V_z \to S_zN$ is smooth and well defined. Moreover there exists a unique $(p,v_z)\in S_pN$ so that 
$$
\gamma_{p,v_z}(\tau_{exit}(p,v_z))=z \hbox{ and } \dot \gamma_{p,v_z}(\tau_{exit}(p,v_z))=\Theta_z(p)\in A(r,r')
$$
and
$$
\overline{\Sigma}(z,\Theta_z(p))=\overline{\Sigma}(p,v_z).
$$
Then $v_z= \dot \gamma_{z,-\Theta_z(p)}(\tau_{exit}(z,-\Theta_z(p)))$ and thus the data \eqref{bigger geodesic measurment data} determines, if $v_z$ and $W(p)$ are parallel. It follows from Lemma \ref{Le:coordinates_by_F_V} that there exists  $q \in U'$ such that $v_q$ is not parallel to $W(p)$. Therefore we can use data \eqref{bigger geodesic measurment data} to determine a point $q$ that satisfies the property  \eqref{eq:defining_property_for_good_neigborhood_for_boundary_point2}.
Let $(q,\eta) \in A(r,R_{\p M}^E(q))$. Write $V_q\subset U'$ for a neighborhood of $p$, $q \notin V_q $ such that the map $\Theta_q: V_q \to S_qN$ is well defined. 


\medskip

Next we will construct the map $\Pi_W^E$, see \eqref{eq:orthogonal_projection_to_boundary}. Since $\gamma_{p,-W}$ is not self-intersecting at $p$ there exists $T>0$ such that for any $t\in [0,T]$ the point $\gamma_{p,-W}(t)$ is not a self-intersecting point on geodesic $\gamma_{p,-W}$. We will prove that there exists $0<T'\leq T$ and $\epsilon>0$ such that for any $t \in [0,T']$ and $z \in B_{\p M}(p,\epsilon)$ the point $\gamma_{z,-W}(t)$ is not a self-intersecting point of the geodesic $\gamma_{z,-W}$. 

If that is not true, then there exists a sequence $(p_j)_{j=1}^\infty \subset \p M$ that converges to $p$ as $j \to \infty$ and moreover, there exist sequences $(t_j)_{j=1}^\infty, (T_j)_{j=1}^\infty  \subset \R$ such that for every $j \in \N$, we have $T_j>t_j\geq 0$, $T_j\leq \tau_{exit}(p_j,-W)$, 
$$
\gamma_{p_j,-W}(t_j)=\gamma_{p_j,-W}(T_j) \hbox{ and } t_j \to 0 \hbox{ as } j \to \infty.
$$
By Lemma \ref{Le:exit_time_func_is_cont} we may assume that $T_j \to T_0\in \R_+$ as $j \to \infty$. Then
$$
p=\lim_{j \to \infty} \gamma_{p_j,-W}(t_j)=\lim_{j \to \infty} \gamma_{p_j,-W}(T_j)=\gamma_{p,-W}(T_0).
$$
Notice that $T_0>0$ since otherwice geodesic loops $\gamma_{p_j,-W}([t_j,T_j])$ would be short and this would contradict Lemma \ref{Le:minimizing_geo_from_boundary}. Therefore due to Corollary \ref{Co:interior_of_geo} it must hold that $T_0=\tau_{exit}(p,-W)$. But this contradicts the assumption that $p$ is not a self-intersecting point of $\gamma_{p,-W}$.

Consider the set
$$
O(p,T^{(1)},\epsilon^{(1)}):=\{\exp_z(sW(z))\in \overline{M}; \: z \in B_{\p M}(p,\epsilon^{(1)}), \: s \in(0,T^{(1)})\} \subset V_q,
$$
where $T^{(1)}>0$ and $\epsilon^{(1)}>0$  are so small that  for any $t \in [0,T^{(1)}]$ and $z \in B_{\p M}(p,\epsilon^{(1)})$ the point $\gamma_{z,-W}(t)$ is not a self-intersecting point on the geodesic $\gamma_{z,-W}$.  For every $w \in \p M \cap O(p,T^{(1)},\epsilon^{(1)})$ we denote the component of $\overline{\Sigma}(w,W) \cap R_{\p M}^E(O(p,T^{(1)},\epsilon^{(1)}))$ that contains $R_{\p M}^{E}(w)$ by $C(w,T^{(1)},\epsilon^{(1)})$. Moreover, due to the choice of $T^{(1)}$ and $\epsilon^{(1)}$ it holds that $R_{\p M}^{E}(\gamma_{w,W}([0,T^{(1)}])) = C(w,T^{(1)},\epsilon^{(1)})$, for every $w \in \p M \cap O(p,T^{(1)},\epsilon^{(1)})$.  

By Lemma \ref{Le:coordinates_by_F_V}  there exists $T^{(2)} \in (0, T^{(1)})$ and $\epsilon^{(2)}\in (0,\epsilon^{(1)})$ such that, the set $O(p,T^{(2)},\epsilon^{(2)})$ is open and for all  $w,z \in (\p M \cap O(p,T^{(2)},\epsilon^{(2)})), \: w\neq z$ it holds that
$$
\gamma_{w,W}([0,T^{(2)}]) \cap \gamma_{z,W}([0,T^{(2)}])=\emptyset.
$$
Therefore
$$
C(w,T^{(2)},\epsilon^{(2)}) \cap C(z,T^{(2)},\epsilon^{(2)})=\emptyset.
$$
Finally we notice that 
$$
R_{\p M}^E(O(p,T^{(2)},\epsilon^{(2)}))=\bigcup_{w \in (\p M \cap O(p,T^{(2)},\epsilon^{(2)}))}C(w,T^{(2)},\epsilon^{(2)}).
$$
Then for a point $z \in O(p,T^{(2)},\epsilon^{(2)})$ there exists a unique $w \in (\p M \cap O(p,T^{(2)},\epsilon^{(2)}))$ such that $R_{\p M}^E(z) \in C(w,T,^{(2)}, \epsilon^{(2)})$.

Next we will show how we can determine a set $V_{q,W} \subset V_q$ that is similar to $O(p,T^{(2)},\epsilon^{(2)})$ just using the data \eqref{bigger geodesic measurment data}. Let $\epsilon >0$ be so small that $B_{\p M}(p,\epsilon) \subset V_q$. For every $w \in B_{\p M}(p,\epsilon)$ we write $C(w)$ for the maximal connected subset of $\overline{\Sigma}(w,W)$ that contains $R_{\p M}^E(w)$ and satisfies 
\begin{itemize}
\item[(P1)] The sets $C(w)$ are homeomorphic to $[0,1)$.
\item[(P2)] For all $w,z \in B_{\p M}(p,\epsilon), \: w\neq z$ the sets $C(w)$ and $C(z)$ do not intersect.
\item[(P3)] The set $\widetilde V:=\bigcup_{w \in B_{\p M}(p,\epsilon)} C(w) \subset R_{\p M}^E(\overline{M})$ is open and contained in $R_{\p M}^E(V_q)$.
\end{itemize}
We emphasize that properties (P1)--(P3) hold by Lemma \ref{Le:coordinates_by_F_V}, if $\epsilon>0$ is small enough. Moreover since the map $R_{\p M}^E$ is a homeomorphism, for any given value of $\epsilon$ we can verify,  by using data \eqref{bigger geodesic measurment data}, if conditions (P1)--(P3) ave valid. Thus, let us choose  $\epsilon>0$ so that (P1)--(P3) are valid and define $V_{q,W}:=(R^{E}_{\p M})^{-1}(\widetilde V)$. 

Finally we are ready to define the map $\Pi^E_{q,W}$ by the equation
\begin{equation}
\label{eq:char_of_map_Pi_W}
\Pi^E_{q,W}:V_{q,W} \to \p M, \: \Pi^E_{q,W}(z):=\{w \in (\p M \cap V_{q,W}) ; \: R_{\p M}^E(z) \in C(w)\}.
\end{equation}
By the continuity of the map $E_W$ (as in Lemma \ref{Le:coordinates_by_F_V}) and the properties (P1)--(P3) it holds that the map $\Pi^E_{q,W}$ is continuous on $V_{q,W}$. Moreover due to Lemma \ref{Le:coordinates_by_F_V} the map $\Pi^E_{q,W}$ coincides with $\Pi_{W}$ near the point $p$.

\medskip
Let $p \in \p M_1$ and let vector field $W \in (\p_+SM)^{int}$ be such that $W(p)\notin I$.  Choose $(q,\eta) \in K_{G}(p)$ and $V_{q,W}$, a neighborhood of $p$ such that $q \notin V_{q,W}$ for which \eqref{eq:coordinates_match} holds.  Finally we will verify that properties \eqref{eq:defining_property_for_good_neigborhood_for_boundary_point3}--\eqref{eq:the_boundary_points_are_the_same}  are valid for $p$ if and only if they are valid for $\Psi(V_{q,W})\subset \overline{M}_2$.  

The property \eqref{eq:defining_property_for_good_neigborhood_for_boundary_point3} follows from  \eqref{eq:Phi_preserves_boundary_inner_product} and \eqref{eq:geodesics_are_the_same}.

By the definiton \eqref{eq:the_map} of the mapping $\Psi$ the properties (P1)--(P3) are valid in $V_{q,W}$ if and only if the same holds for $\Psi(V_{q,W})$. Therefore \eqref{eq:the_boundary_points_are_the_same} follows from \eqref{eq:char_of_map_Pi_W}.
\end{proof}

Given $p\in \p M$ and a non-vanishing, non-tangential inward pointing smooth vector field $W$ on $\p M$ such that $W(p)\notin I$, we choose $(q,\eta) \in K_G(p)$ and a neighborhood $V_{q,W}$ of $p$ such that $q \notin V_{q,W}$ as in Lemma \ref{Le:good_neigborhood_for_boundary_point}. Let $v \in T_qN$ and define 
\begin{equation}
\label{eq:the_map_Q_{q,v}}
Q_{q,v}: V_{q,W} \to \R, \:Q_{q,v}(x):=\langle v,\Theta_q(x) \rangle_g.
\end{equation} 
Now we are ready to construct a boundary coordinate system near $p\in \p M$.
%
\begin{Lemma}
\label{Le:smooth_boundary_atlas}
Let $(N,g)$ and $M$ be as in the Theorem \ref{th:main}. Let $r \in R_{\p M}^E( \p M)$ and let $p\in \p M$ to be the unique point for which $R_{\p M}^E(p)=r$.  Let $W$ be a non-vanishing, non-tangential inward pointing smooth vector field on $\p M$ such that $W(p)\notin I$, we choose $(q,\eta) \in K_G(p)$ and a neighborhood $V_{q,W}$ of $p$ such that $q \notin V_q$ as in Lemma \ref{Le:good_neigborhood_for_boundary_point}. Then there exists $v \in T_qN$ and a neighborhood $V_{q,v,W}$ of $p$ such that $(\widetilde Q_{q,v}, \Pi^E_{q,W}):V_{q,v,W} \to (\R \times \p M)$ defines a smooth boundary coordinate system near $p$. Here
$$
\widetilde Q_{q,v}(x):= \pm (Q_{q,v}(\Pi^E_{q, W}(x))-Q_{q,v}(x)),
$$
where the sign is chosen such that $\widetilde Q_{q,v}\geq 0$. See Figure \eqref{fig:bd_coord}.


Moreover, for given $r=R_{\p M}^E(p) \in R_{\p M}^E(M)$, $W$ such that $W(p)\notin I$ and $(q,\eta) \in K_G(p)$ the data \eqref{bigger geodesic measurment data} determines $v \in T_qN$ and a neighborhood $R_{\p M}^E(V_{q,v,W})$ of $r$ and the map $(\widetilde Q_{q,v}, \Pi^E_{q,W})\circ (R_{\p M}^E)^{-1}:R_{\p M}^E(V_{q,v,W}) \to (\R \times \p M)$, such that $(\widetilde Q_{q,v}, \Pi^E_{q,W}):V_{q,v,W} \to (\R \times \p M)$ is a smooth coordinate map from a neighborhood $V_{q,v, W}$ of $p$ onto some open set $U \subset (\R \times \p M)$.

\color{black}

More precisely, let $(N_i,g_i)$ and $M_i$ be as in Theorem \ref{th:main} such that \eqref{eq:collar_neib} and \eqref{eq:equivalence_of_biger_data} hold. Then for a given $p \in \p M_1$,  $(q,\eta)\in K_G(p)$, $W$, that is a non-vanishing, non-tangential inward pointing smooth vector field on $\p M_1$ such that $W(p)\notin I$, neighborhood $V_{q,W}$ of $p$ as in Lemma \ref{Le:good_neigborhood_for_boundary_point} and $v\in T_{q}N_1$  we have that \eqref{eq:the_boundary_points_are_the_same} holds. If $s\in \R$, $w \in \p M_1$ and $z \in V_q$ then
\begin{eqnarray}
\label{eq:the_boundary_angles_are_the_same}
\widetilde Q_{q,v}(z)=(s,w) \hbox{ if and only if } \widetilde Q_{\phi(q),D\Phi v}(\Psi(z))=(s,\phi(w)),
\end{eqnarray}
and
\begin{equation}
\label{eq:determinants_boundary_match}
\det D (\widetilde Q_{q,v}, \Pi^E_{q, W})(p)\neq 0, \hbox{ if and only } \det D(\widetilde Q_{\phi(q),D\Phi v}, \Pi^E_{\phi(q), D \Phi W})(\Psi(p)) \neq 0.
\end{equation}
\end{Lemma}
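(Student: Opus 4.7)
I would verify the coordinate property in a chart adapted to $W$, then handle data-recovery and $\Phi$-compatibility as in the analogous Lemma \ref{Le:smooth_atlas}. To check invertibility of the Jacobian, I work in the smooth chart $(z,t)\mapsto E_W(z,t)=\exp_z(tW(z))$ from Lemma \ref{Le:coordinates_by_F_V} on a neighborhood of $p$, with a local chart $(z^1,\ldots,z^{n-1})$ on $\p M$ centered at $p$. By Lemma \ref{Le:good_neigborhood_for_boundary_point}, $\Pi^E_{q,W}$ agrees with $\Pi_W$ near $p$, so in these coordinates it is the projection $(z,t)\mapsto z$. Since $\Pi^E_{q,W}(E_W(p,t))\equiv p$ for small $t\geq 0$,
\[
\partial_t \widetilde Q_{q,v}(E_W(p,t))\Big|_{t=0}
= \mp\frac{d}{dt}\langle v, \Theta_q(\exp_p(tW(p)))\rangle_g\Big|_{t=0}
= \mp\langle v, D\Theta_q(p)W(p)\rangle_g.
\]
Thus the Jacobian matrix of $(\widetilde Q_{q,v},\Pi^E_{q,W})$ at $p$ is block lower triangular with identity block on the $z$-directions and scalar entry $\mp\langle v, D\Theta_q(p)W(p)\rangle_g$ on the $t$-direction, so invertibility reduces to $\langle v, D\Theta_q(p)W(p)\rangle_g\neq 0$. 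By Lemma \ref{Le:ker_of_Dtheta}, $\ker D\Theta_q(p)$ is spanned by the unit tangent at $p$ of the geodesic joining $q$ and $p$, which by \eqref{eq:defining_property_for_good_neigborhood_for_boundary_point2} is not parallel to $W(p)$; hence $D\Theta_q(p)W(p)\neq 0$ and the set of admissible $v\in T_qN$ is open and dense. For any admissible $v$ I fix the sign in $\widetilde Q_{q,v}$ so that $\widetilde Q_{q,v}\geq 0$ on a small $V_{q,v,W}\subset V_{q,W}$.

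For data-intrinsic reconstruction of $v$ and the chart, Lemma \ref{Le:n-1_coordinates} recovers $\Theta_q\circ(R^E_{\p M})^{-1}$, Lemma \ref{Le:good_neigborhood_for_boundary_point} recovers $\Pi^E_{q,W}\circ(R^E_{\p M})^{-1}$, and the boundary metric from Lemma \ref{Le:extension_from_tangential_data_to_full_data}, together with the boundary normal $\nu(q)$, determines the inner product on $T_qN$; hence $Q_{q,v}$ and $\widetilde Q_{q,v}$ are computable from the data for every $v\in T_qN$. To single out admissible $v$ I run the same homeomorphism-plus-transition-compatibility test as in Lemma \ref{Le:smooth_atlas}: declare $v$ admissible if $(\widetilde Q_{q,v},\Pi^E_{q,W})\circ(R^E_{\p M})^{-1}$ is a local homeomorphism onto an open subset of $[0,\infty)\times \p M$ at $r$ and if the transition maps to $(\widetilde Q_{q,v'},\Pi^E_{q,W})\circ(R^E_{\p M})^{-1}$ are smooth with non-vanishing Jacobian for $v'$ in an open dense subset of $T_qN$. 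The Inverse Function Theorem guarantees that this precisely captures the non-vanishing of the Jacobian, so an admissible $v$ and the neighborhood $R^E_{\p M}(V_{q,v,W})$ can be read off the data.

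For the transfer under $\Phi$, the proof of Lemma \ref{Le:extension_from_tangential_data_to_full_data} extends \eqref{eq:Phi_preserves_boundary_inner_product} via \eqref{eq:rep_of_boundary_vector} so that $D\Phi:T_qN_1\to T_{\phi(q)}N_2$ is a linear isometry, and \eqref{eq:coordinates_match} then gives
\[
Q_{q,v}(z)
= \langle v, \Theta_q(z)\rangle_{g_1}
= \langle D\Phi v, \Theta_{\phi(q)}(\Psi(z))\rangle_{g_2}
= Q_{\phi(q),D\Phi v}(\Psi(z)).
\]
Combined with \eqref{eq:the_boundary_points_are_the_same} this yields $\widetilde Q_{\phi(q),D\Phi v}\circ\Psi = \widetilde Q_{q,v}$ under the corresponding sign choice, giving \eqref{eq:the_boundary_angles_are_the_same} (reading the stated formula $\widetilde Q_{q,v}(z)=(s,w)$ as the chart equality $(\widetilde Q_{q,v},\Pi^E_{q,W})(z)=(s,w)$), and the Jacobian calculation of the first paragraph together with invertibility of $D\Phi$ and $D\phi$ delivers \eqref{eq:determinants_boundary_match}. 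The main obstacle, as in Lemma \ref{Le:smooth_atlas}, is the data-only admissibility criterion for $v$: we never see the differential of $\widetilde Q_{q,v}$ at $p$ directly, only the composed map on the topological manifold $R^E_{\p M}(\overline M)$, so non-vanishing of the Jacobian must be encoded indirectly through the topological and smooth-compatibility tests.
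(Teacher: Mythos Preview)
Your argument is correct and follows essentially the same route as the paper: both work in the $E_W$ chart of Lemma \ref{Le:coordinates_by_F_V}, reduce invertibility of the Jacobian to the single scalar condition $\langle v, D\Theta_q(p)W(p)\rangle_g\neq 0$, invoke Lemma \ref{Le:ker_of_Dtheta} together with \eqref{eq:defining_property_for_good_neigborhood_for_boundary_point2} to produce admissible $v$, and then defer the data-intrinsic selection of $v$ and the $\Phi$-compatibility to the mechanism already set up in Lemma \ref{Le:smooth_atlas}. The only point you leave slightly implicit that the paper states explicitly is that $\widetilde Q_{q,v}$ vanishes exactly on $\p M$ near $p$ (needed for this to be a \emph{boundary} chart), but this is immediate from your Jacobian computation via Taylor expansion in $t$.
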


\begin{figure}[h]
 \begin{picture}(100,150)
  \put(-100,20){\includegraphics[width=300pt]{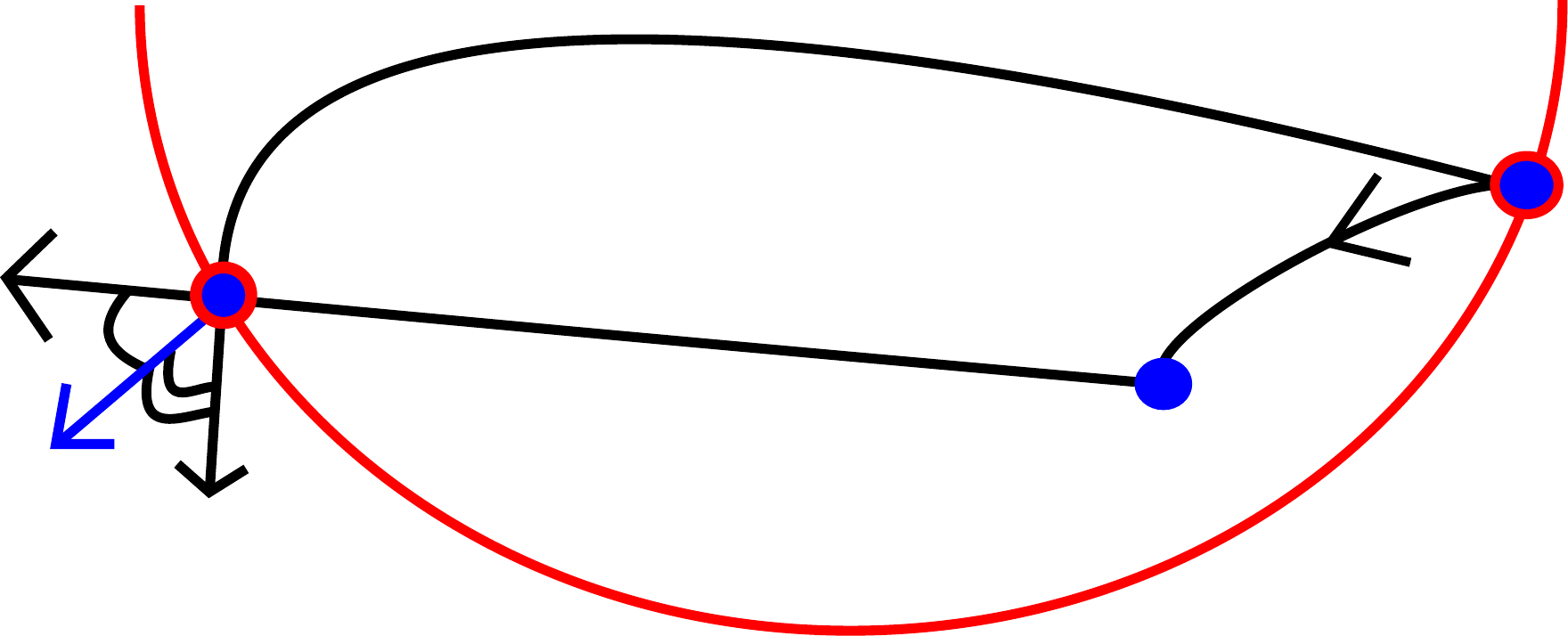}}
  \put(20,10){$\p M$}
  \put(120,55){$x$}
  \put(205,103){$\Pi^E_{q,W}(x)$}
  \put(-75,35){$\Theta_{q}(\Pi^E_{q,W}(x))$}
  \put(-90,45){$v$}
  \put(-110,103){$\Theta_{q}(x)$}
  \put(-50,90){$q$}
   \end{picture}
\caption{Here is a schematic picture about the map $(\widetilde Q_{q,v}, \Pi^E_{q, W})$ evaluated at a point $x\in M$ that is close to $\p M$, where the point $x \in M$ is the blue dot. The right hand side red\&blue dot is $\Pi^E_{q,W}(x)$ and the left hand side red\&blue dot is $q$. The blue arrow is the given vector $v \in T_qN$.}
\label{fig:bd_coord}
\end{figure}

\begin{proof}
Recall that by Lemma \ref{Le:ker_of_Dtheta} and \eqref{eq:defining_property_for_good_neigborhood_for_boundary_point2} it holds that $W(p) \;$is not contained in ker$(D \Theta_q)$. Therefore there exists $v \in T_qN$ such that
$$
\langle v, D\Theta_qW(p) \rangle_g \neq 0.
$$ 
Choose the local coordinates at $p$ with the map $E_W$ (as in Lemma \ref{Le:coordinates_by_F_V}). Then with respect to these coordinates we have
$$
D(\widetilde Q_{q,v}, \Pi^E_{q, W})(p)=\left(\begin{array}{cc}
\overline 0^T & -(W Q_{q,v})(p)
\\
Id_{n-1} & \overline 0
\end{array} \right),
$$
since $\Pi^E_{q,W}$ coincides with $\Pi_W$ near $p$. Also it holds that
\begin{equation}
\label{eq:WQ_q_v}
(W Q_{q,v})(p)=\langle v, D\Theta_q W(p) \rangle_g \neq 0.
\end{equation}
Therefore $D(\widetilde Q_{q,v}, \Pi^E_{q, W})(p)$ is invertible. Moreover, $(\widetilde Q_{q,v}, \Pi^E_{q, W})(z)=(0,z)$ if $z \in \p M$ and due to  \eqref{eq:WQ_q_v}, Lemma \ref{Le:coordinates_by_F_V} and the Taylor expansion of $\widetilde Q(q,v)$ it holds that $\widetilde Q_{q,v}(w)\neq 0$ when $w \notin \p M\cap V_{q,W}$ is close to $p$. 

We choose so small neighborhood $V_{q, v, W}$ for $p$ such that $\widetilde Q_{q,v}: V_{q, v, W}\to \R$ vanishes only in $V_{q, v, W} \cap \p M$. Therefore we have proven that $(\widetilde Q_{q,v}, \Pi^E_{q, W}): V_{q, v, W}\to \R^n$ is a smooth boundary coordinate map at $p$.

\color{black}

\medskip
Notice that for given $r \in R_{\p M}^E(\p M)$, $W$, $(q,\eta), \in K_G(p)$ and $ v\in T_{q}N$ the data \eqref{bigger geodesic measurment data} determines a neighborhood $R^E_{\p M}(V_{q, v, W})$ of $r$ and the map $(\widetilde Q_{q,v}, \Pi_{q, W}) \circ (R^E_{\p M})^{-1}$ by a similar argument as in the proof of Lemma \ref{Le:smooth_atlas}.

\medskip
The equations  \eqref{eq:the_boundary_angles_are_the_same} and \eqref{eq:determinants_boundary_match} can be proven by a similar argument as in the proof of Lemma \ref{Le:smooth_atlas}.
\end{proof}

\medskip
Let $A$ be such an index set that collections $(V_{q,\widetilde q, v}^\alpha,\Theta_{q,\widetilde q,v}^\alpha)_{\alpha \in A}$ and
\\
 $(V^\beta_{q,v,W},(\widetilde Q_{q,v}, \Pi^E_{q,W})^\beta)_{\beta \in A}$, that are as in Lemmas \ref{Le:smooth_atlas} and \ref{Le:smooth_boundary_atlas}, form a smooth atlas of $(\overline M, g)$. 

We define an atlas for $R^{E}_{\p M}(\overline M)$ using the following charts
\begin{equation}
(R^E_{\p M}(V_{q,\widetilde q, v}^\alpha),\Theta_{q,\widetilde q,v}^\alpha\circ (R^E_{\p M})^{-1})_{\alpha \in A} \hbox{ and } (R^E_{\p M}(V^\beta_{q,v,W}),(\widetilde Q_{q,v}, \Pi^E_{q,W})^\beta \circ (R^E_{\p M})^{-1})_{\beta \in A}.
\label{eq:smooth_atlas_for_data}
\end{equation}

By Lemmas   \ref{Le:smooth_atlas} and \ref{Le:smooth_boundary_atlas} the charts given in \eqref{eq:smooth_atlas_for_data} define a smooth structure for $R^{E}_{\p M}(\overline M)$. 
\begin{Proposition}
\label{Pr:reconstruction_of_smooth_struct}
Let $(\overline{M}, g)$ be as in Theorem \ref{th:main}. If data \eqref{bigger geodesic measurment data} is given, we can find the smooth charts of \eqref{eq:smooth_atlas_for_data} using only the data  \eqref{bigger geodesic measurment data}. Moreover with respect to this smooth structure the map $R^E_{\p M}:\overline{M}\to R^E_{\p M}(\overline{M})$  is a diffeomorphism.
\end{Proposition}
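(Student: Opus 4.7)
The plan is to combine Lemmas \ref{Le:smooth_atlas} and \ref{Le:smooth_boundary_atlas} in a straightforward way: they already provide, for every point $p\in\overline M$, a chart near $p$ that can be read off from the data \eqref{bigger geodesic measurment data} given $r=R^E_{\p M}(p)$. So the first task is to verify that as $p$ ranges over $\overline M$ these charts actually cover $R^E_{\p M}(\overline M)$, and the second task is to check that the transition maps are smooth so that \eqref{eq:smooth_atlas_for_data} genuinely defines a $C^\infty$-structure on $R^E_{\p M}(\overline M)$.

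For the covering step, I would iterate the construction over $r\in R^E_{\p M}(\overline M)$. Using Lemma \ref{Le:bad_directions} we recover $K_G(p)$ from $r$ and the data; if $r\in R^E_{\p M}(M)$ we then invoke Lemma \ref{Le:smooth_atlas} to produce $(q,\eta),(\widetilde q,\widetilde\eta)\in K_G(p)$, an admissible $v\in T_{\widetilde q}N$, and a neighborhood $R^E_{\p M}(V^\alpha_{q,\widetilde q,v})$ of $r$ together with the coordinate map $\Theta^\alpha_{q,\widetilde q,v}\circ(R^E_{\p M})^{-1}$. If $r\in R^E_{\p M}(\p M)$ we instead fix a smooth non-vanishing, non-tangential, inward pointing vectorfield $W$ on $\p M$ with $W(p)\notin I$ (which we can test for via the data, since $I$ is determined by $r$ and the sets $\overline\Sigma(p,\xi)$) and apply Lemma \ref{Le:smooth_boundary_atlas} to extract a boundary chart $(\widetilde Q^\beta_{q,v},\Pi^{E,\beta}_{q,W})\circ(R^E_{\p M})^{-1}$ on a neighborhood of $r$. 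Running through all points yields the atlas \eqref{eq:smooth_atlas_for_data}, which by construction covers $R^E_{\p M}(\overline M)$.

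For compatibility, I would observe that the transition between any two of the charts in \eqref{eq:smooth_atlas_for_data} is obtained by conjugating the corresponding transition between the underlying charts on $\overline M$ by $(R^E_{\p M})^{-1}\circ R^E_{\p M}=\mathrm{id}$, so it coincides as a map between open subsets of $\R^n$ (or $\R^{n-1}\times\R_{\geq 0}$) with the original smooth transition on $\overline M$. Since $(\overline M,g)$ is a smooth manifold and the underlying charts were proved smooth in Lemmas \ref{Le:smooth_atlas} and \ref{Le:smooth_boundary_atlas}, the transitions are $C^\infty$; hence \eqref{eq:smooth_atlas_for_data} is a smooth atlas. Finally, the diffeomorphism claim is tautological in this setup: in the chart pair $(V^\alpha_{q,\widetilde q,v},\Theta^\alpha_{q,\widetilde q,v})$ on $\overline M$ and $(R^E_{\p M}(V^\alpha_{q,\widetilde q,v}),\Theta^\alpha_{q,\widetilde q,v}\circ(R^E_{\p M})^{-1})$ on $R^E_{\p M}(\overline M)$, the local representation of $R^E_{\p M}$ is the identity map, and similarly for boundary charts; thus $R^E_{\p M}$ is smooth with smooth inverse.

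The only real pitfall I anticipate is a bookkeeping one: one must make sure that every ingredient entering the construction of a chart (the points $q,\widetilde q$, the vector $v$, the field $W$, the neighborhoods $V_q$, $V_{q,W}$) can be selected from the data alone and does not implicitly use information about $\overline M$. But this is exactly what the ``more precisely'' clauses of Lemmas \ref{Le:bad_directions}, \ref{Le:n-1_coordinates}, \ref{Le:smooth_atlas}, \ref{Le:good_neigborhood_for_boundary_point} and \ref{Le:smooth_boundary_atlas} were formulated to ensure, so the verification reduces to stitching these statements together, with no new geometric input required.
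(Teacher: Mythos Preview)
Your proposal is correct and follows exactly the same approach as the paper, which simply records that the claim is an immediate consequence of Lemmas \ref{Le:smooth_atlas} and \ref{Le:smooth_boundary_atlas} together with the definition \eqref{eq:smooth_atlas_for_data}. Your write-up just unpacks the covering, compatibility, and diffeomorphism steps that the paper leaves implicit; no additional ideas are needed.
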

\begin{proof}
The claim follows from Lemmas  \ref{Le:smooth_atlas}, \ref{Le:smooth_boundary_atlas} and the definition \eqref{eq:smooth_atlas_for_data}. 
\end{proof}
\color{black}
\medskip
Now we are ready to present the main theorem of this section.
\begin{Theorem}
\label{th:Psi_is_diffeo}
The mapping $\Psi:\overline{M}_1 \to \overline{M}_2$ is a diffeomorphism.
\end{Theorem}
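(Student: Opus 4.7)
The plan is to exploit the commutative diagram \eqref{eq:diagram} together with the matching of coordinate charts provided by Lemmas \ref{Le:smooth_atlas} and \ref{Le:smooth_boundary_atlas}. By Proposition \ref{Pr:reconstruction_of_smooth_struct}, both $R^E_{\p M_1}$ and $R^E_{\p M_2}$ are diffeomorphisms onto their images with respect to the smooth structures defined in \eqref{eq:smooth_atlas_for_data}, and $\Psi=(R^E_{\p M_2})^{-1}\circ \widehat{D\Phi} \circ R^E_{\p M_1}$. Hence it suffices to verify, in the explicit coordinate charts constructed in the previous two lemmas, that $\Psi$ is represented locally by a smooth map with smooth inverse. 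The work is therefore reduced to reading off the already-established compatibility formulas.

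For an interior point $p\in M_1$, I would pick $(q,\eta),(\widetilde q,\widetilde\eta)\in K_G(p)$ and $v\in T_{\widetilde q}N_1$ so that $\Theta_{q,\widetilde q,v}$ is a smooth chart at $p$. By \eqref{eq:the_set_of_good_dir_are_the_same} the pushed-forward vectors $(\phi(q),D\Phi\eta),(\phi(\widetilde q),D\Phi\widetilde\eta)$ belong to $K_G(\Psi(p))$, and by \eqref{eq:determinants_interior_match} the corresponding map $\Theta_{\phi(q),\phi(\widetilde q),D\Phi v}$ is a smooth chart at $\Psi(p)$. Reading \eqref{eq:interior_coordinates_matc}, the coordinate representation of $\Psi$ in these charts is
$$
(\xi,s)\longmapsto (D\Phi\,\xi,\,s),\qquad \xi\in S_qN_1,\ s\in \R.
$$
Since $D\Phi$ is a smooth bundle isomorphism which by \eqref{eq:Phi_preserves_boundary_inner_product} restricts to a smooth diffeomorphism $S_qN_1\to S_{\phi(q)}N_2$ between the unit sphere fibres, this coordinate expression is smooth with smooth inverse.

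For a boundary point $p\in \p M_1$ I would use the boundary chart from Lemma \ref{Le:smooth_boundary_atlas}: take $(q,\eta)\in K_G(p)$, a non-vanishing, non-tangential, inward pointing smooth vector field $W$ on $\p M_1$ with $W(p)\notin I$, and $v\in T_qN_1$ such that $(\widetilde Q_{q,v},\Pi^E_{q,W})$ is a chart at $p$. The pushforward $D\Phi\,W$ is such a vector field on $\p M_2$ by \eqref{eq:collar_neib} and \eqref{eq:Phi_preserves_boundary_inner_product}, and $D\Phi W(\phi(p))\notin I$ on the $M_2$ side by the $\Psi$-invariance of self-intersecting directions established in the proof of Lemma \ref{Le:non_intersection_directions}. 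Combining \eqref{eq:defining_property_for_good_neigborhood_for_boundary_point3} and \eqref{eq:determinants_boundary_match} gives that $(\widetilde Q_{\phi(q),D\Phi v},\Pi^E_{\phi(q),D\Phi W})$ is a smooth chart at $\Psi(p)=\phi(p)$, and then \eqref{eq:the_boundary_angles_are_the_same} together with \eqref{eq:the_boundary_points_are_the_same} identifies the local representation of $\Psi$ as
$$
(s,w)\longmapsto (s,\phi(w)),
$$
which is smooth with smooth inverse because $\phi:\p M_1\to \p M_2$ is a diffeomorphism by Lemma \ref{Le:complete:scattering_data}.

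Smoothness of $\Psi^{-1}$ follows by the symmetric argument, swapping the roles of $M_1$ and $M_2$ and using $\Phi^{-1}$, $\phi^{-1}$ (all the hypotheses and coordinate constructions are stated symmetrically). I do not expect any serious obstacle here: the heavy lifting was performed in Lemmas \ref{Le:smooth_atlas} and \ref{Le:smooth_boundary_atlas}, which were specifically formulated so that both the existence of suitable coordinate charts and their matching under $\Psi$, $\Phi$, $\phi$ are available; the present theorem is essentially an assembly statement. The only thing to double-check is that the domains of the matched charts can be chosen compatibly (e.g.\ shrinking $V_{q,\widetilde q,v}$ so that $\Psi(V_{q,\widetilde q,v})$ lies inside the domain of $\Theta_{\phi(q),\phi(\widetilde q),D\Phi v}$), which is immediate from the continuity of $\Psi$ provided by Theorem \ref{th:topology}.
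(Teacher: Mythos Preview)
Your proposal is correct and follows essentially the same route as the paper: reduce via the commutative diagram \eqref{eq:diagram} to checking smoothness of $\widehat{D\Phi}$ (equivalently $\Psi$) in the charts of Lemmas \ref{Le:smooth_atlas} and \ref{Le:smooth_boundary_atlas}, and read off the local representations $(\xi,s)\mapsto(D\Phi\,\xi,s)$ and $(s,w)\mapsto(s,\phi(w))$ from the compatibility formulas \eqref{eq:interior_coordinates_matc}--\eqref{eq:determinants_interior_match} and \eqref{eq:the_boundary_points_are_the_same}--\eqref{eq:determinants_boundary_match}. The only cosmetic difference is that the paper phrases the conclusion as ``local diffeomorphism plus injectivity (Theorem \ref{th:topology}) gives a global diffeomorphism'' rather than invoking the symmetric argument for $\Psi^{-1}$.
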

\begin{proof}
Since  maps $R_{\p M_i}^E$, $i \in \{1,2\}$ are diffeomorphisms it suffices to prove that 
$$
\widehat{D \Phi}: R_{\p M_1}^E (\overline M_1)\to R_{\p M_2}^E(\overline M_2)
$$
is a diffeomorphism. 

Let $p \in M_1$. By Lemma \ref{Le:smooth_atlas} there exist $(q,\eta), (\widetilde q, \widetilde \eta) \in K_G(p)$, $v \in T_{\widetilde q}M_1$ and a neighborhood $V=V_{q,\widetilde q, v}$ of $p$ such that the map $\Theta_{q,\widetilde q, v}:V \to (S_qM_1 \times \R)$ is a smooth coordinate map. Then it follows from \eqref{eq:interior_coordinates_matc} and \eqref{eq:determinants_interior_match} that $\Theta_{\phi(q),\widetilde \phi(q), D\Phi v}:\Psi(V) \to (S_qM_2 \times (\R\setminus \{0\}))$ is a smooth coordinate map. Therefore the local representation of $\widehat{D \Phi}$, with respect to the smooth structures as in \eqref{eq:smooth_atlas_for_data}, on $R^E_{\p M}(V)$ is given by
\begin{equation}
\label{eq:local_rep_of_Psi_1}
(\p_-S_{q}\overline{M}_1 \times  (\R\setminus \{0\}))\ni(\xi,s) \mapsto (D\Phi \, \xi,s) \in (\p_- S_{\phi(q)}\overline{M}_2 \times  (\R\setminus \{0\})).
\end{equation}
Since $\Phi:K_1 \to K_2$ (see \eqref{eq:big_phi}) is a diffeomorphism it holds that  $\widehat{D \Phi}$ is  diffeomorphic on $R^E_{\p M_1}(V)$.

Let $p \in \p M_1$. By Lemma \ref{Le:smooth_boundary_atlas} there exist $(q,\eta) \in K_G(p)$, $v \in T_qM_1$, a smooth vector field $W$ on $\p M_1$, and a neigborhood $U=V_{q,v,W}$ of $p$ such that the map $(\widetilde Q_{q,v}, \Pi^E_{q, W}) :U \to \R \times \p M_1$ defines smooth local boundary coondinates. Moreover by \eqref{eq:the_boundary_angles_are_the_same} and \eqref{eq:determinants_boundary_match} the map $(\widetilde Q_{\phi(q), D\Phi v}, \Pi^E_{\phi(q), D\Phi W}) :\Psi(U) \to \R \times \p M_2$ defines smooth local boundary coordinates near $\phi(p)$.  Therefore the  local representation of $\widehat{D \Phi}$ on $R^E_{\p M_1}(U)$, with respect to the smooth structures as in \eqref{eq:smooth_atlas_for_data},  is given by 
\begin{equation}
\label{eq:local_rep_of_Psi_2}
(\R \times \p M_1)\ni (s,z)\mapsto (s,\phi(z)) \in \R \times \p M_2  .
\end{equation}
Since $\phi:\p M_1 \to \p M_2$ is a diffeomorphims it holds that  $\widehat{D \Phi}$ on $R^E_{\p M_1}(U)$ is diffeomorphic to its image.

Thus we have proved that $\Psi$ is a local diffeomorphism. By Theorem \ref{th:topology} the map $\Psi$ is one-to-one and therefore it is a global diffeomorphism.

\end{proof}

\color{black}

\section{The reconstruction of the Riemannian metric}
So far we have shown that the map $\Psi:\overline{M}_1 \to \overline{M}_2$ is a diffeomorphism. The aim of this section is to show that $\Psi$ is a Riemannian isometry. We start with showing that $\Psi$ preserves the boundary metric. This is formulated precisely in the following Lemma.
\begin{Lemma}
\label{Le:Psi_pulls_back_the_boundary_metric}
The map $\Psi:\overline{M_1} \to \overline{M}_2$ has a property
$$
(\Psi^\ast g_2)|_{\p M_1}=g_1|_{\p M_1}. 
$$
\end{Lemma}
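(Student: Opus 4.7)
The plan is to observe that this lemma is essentially immediate from two facts we have already established. First, by Theorem \ref{th:topology}, the diffeomorphism $\Psi:\overline{M}_1\to\overline{M}_2$ satisfies $\Psi|_{\p M_1}=\phi$, where $\phi:\p M_1\to \p M_2$ is the boundary diffeomorphism from \eqref{eq:collar_neib1}. Second, Lemma \ref{Le:complete:scattering_data} shows that this $\phi$ is already an isometry between the induced boundary metrics, that is, $\phi^{\ast}(g_2|_{\p M_2})=g_1|_{\p M_1}$.

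Thus the first step is to note that for any point $p\in \p M_1$ and any tangent vector $v\in T_p\p M_1$, the differential $D\Psi|_p$ and $D\phi|_p$ agree on $T_p\p M_1$. This holds because $\Psi$ is a diffeomorphism of smooth manifolds with boundary whose restriction to $\p M_1$ coincides with $\phi$; differentiating the identity $\Psi\circ\iota_{\p M_1}=\iota_{\p M_2}\circ \phi$ (where $\iota$ denotes the inclusion of the boundary into the closed manifold) yields $D\Psi\circ D\iota_{\p M_1}=D\iota_{\p M_2}\circ D\phi$, which says precisely that $D\Psi|_{T\p M_1}=D\phi$.

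The second step is the computation: for $v,w\in T_p\p M_1$,
\begin{equation*}
(\Psi^{\ast}g_2)|_{\p M_1}(v,w)=g_2(D\Psi v,D\Psi w)=g_2(D\phi v,D\phi w)=(\phi^{\ast}g_2|_{\p M_2})(v,w)=g_1|_{\p M_1}(v,w),
\end{equation*}
where the last equality is \eqref{eq:collar_neib}. Since $p\in \p M_1$ and $v,w\in T_p\p M_1$ were arbitrary, the desired identity $(\Psi^{\ast}g_2)|_{\p M_1}=g_1|_{\p M_1}$ follows. There is no real obstacle here; the only thing to verify carefully is the identification $D\Psi|_{T\p M_1}=D\phi$, which is a standard consequence of $\Psi|_{\p M_1}=\phi$ together with the fact that $\Psi$ is smooth up to the boundary (guaranteed by Theorem \ref{th:Psi_is_diffeo}).
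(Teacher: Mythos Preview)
Your proof is correct and follows essentially the same approach as the paper: both use that $\Psi|_{\p M_1}=\phi$ from Theorem~\ref{th:topology} together with $\phi^\ast(g_2|_{\p M_2})=g_1|_{\p M_1}$ from \eqref{eq:collar_neib}. You simply spell out in more detail the identification $D\Psi|_{T\p M_1}=D\phi$ and the pointwise computation that the paper compresses into a single displayed equality.
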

\begin{proof} 
Since $\Psi$ is smooth, the metric tensor $\Psi^\ast g_2$ is well defined.  By  Theorem \ref{th:topology} and \eqref{eq:collar_neib} we have
$$
(\Psi^\ast g_2)|_{\p M_1}
=\phi^{\ast}(g_2|_{\p M_2})=g_1|_{\p M_1}.
$$
\end{proof}

We denote $\widetilde g:=\Psi^{\ast}g_2$ the pullback metric on $\overline{M}_1$ and $g:=g_1$. We will also denote from now on $\overline{M}_1=\overline{M}$. By Lemma  \ref{Le:Psi_pulls_back_the_boundary_metric} $\widetilde g$ and $g$ coincide on the boundary $\p M$. Next we will show that they also have the same Taylor expansion at the boundary.

\begin{Proposition}
\label{le:jet}
Suppose that $N, M, g$ and  $\widetilde g$ are as in Theorem \ref{th:main} such that data \eqref{eq:equivalenc_of_data}  and \eqref{eq:collar_neib} is valid with $\phi=id$. 

Let $(x^1,\ldots, x^n)$ be any coordinate system near the boundary and $\alpha\in \N^{n}$ any multi-index. Write $g=(g_{ij})_{i,j=1}^{n}$ and $\widetilde g=(\widetilde g_{ij})_{i,j=1}^{n}$. Then for all $i,j \in \{1, \ldots,n\}$ holds
\begin{equation}
\label{eq:jet}
\partial^\alpha g_{ij} |_{\p M}= \partial^\alpha \widetilde g_{ij} |_{\p M}, \: \p^\alpha:=\prod_{k=1}^n \left(\frac{\p}{\p x^k}\right)^{\alpha_k}
\end{equation}
\end{Proposition}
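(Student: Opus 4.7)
Since $\phi=\mathrm{id}$ identifies $\partial M_1=\partial M_2$, both $g$ and $\widetilde g:=\Psi^{\ast}g_2$ are Riemannian metrics on the single manifold $\overline M:=\overline M_1$, and Lemma \ref{Le:Psi_pulls_back_the_boundary_metric} already yields the tangential equality $i^{\ast}\widetilde g=i^{\ast}g$. My plan is to prove \eqref{eq:jet} in three stages: (i) upgrade this tangential equality to equality of the full boundary bilinear forms, (ii) identify the scattering relations of $g$ and $\widetilde g$, and (iii) invoke the classical fact that for a strictly convex boundary the scattering relation determines the full Taylor expansion of the metric at $\partial M$.

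For stage (i), the key input is equation \eqref{eq:Phi_preserves_boundary_inner_product}, which says that $D\Phi$ preserves the \emph{entire} boundary inner product on $\partial TM$, not merely its tangential part. To pass from $\Phi$ to $\Psi$, I would track the $g$-normal geodesic $t\mapsto\gamma^{g}_{p,-\nu^{g}}(t)$ issuing from $p\in\partial M$: the pair $(p,\nu^{g})$ lies in $R^E_{\partial M}(\gamma^{g}_{p,-\nu^{g}}(t))$ for every small $t$, so the commutative square \eqref{eq:diagram} forces $\Psi$ to map the $g$-normal geodesic at $p$ onto the $g_2$-normal geodesic at $p$. Differentiating at $t=0$ gives $d\Psi|_p\,\nu^{g}\parallel \nu^{g_2}$, and using \eqref{eq:Phi_preserves_boundary_inner_product} together with the matching of nearby perturbations in tangential directions (via \eqref{eq:equivalence_of_biger_data}) pins the proportionality constant to $1$. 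Combined with $d\Psi|_p=\mathrm{id}$ on $T_p\partial M$ (Theorem \ref{th:topology}), this yields $\widetilde g_p=g_p$ as bilinear forms on the full $T_p\overline M$, which is the $|\alpha|=0$ case of \eqref{eq:jet}.

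With stage (i) in hand, the $g$- and $\widetilde g$-unit sphere bundles over $\partial M$ coincide, so the scattering relations $L_g$ and $L_{\widetilde g}$ are defined on a common domain $\partial_+SM$. Since $\widetilde g=\Psi^{\ast}g_2$ with $\Psi|_{\partial M}=\mathrm{id}$, every $\widetilde g$-geodesic corresponds under $\Psi$ to a $g_2$-geodesic; combined with the intrinsic recovery of $L_g$ from the data noted in Section \ref{subsec_Boundary}, one deduces $L_g=L_{\widetilde g}$. This reduces \eqref{eq:jet} for higher $|\alpha|$ to a statement about the scattering relation alone.

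Finally I would invoke the classical boundary-determination theorem: for a compact Riemannian manifold with strictly convex boundary the scattering relation $L_g$ uniquely determines the Taylor series of $g_{ij}$ at $\partial M$ to all orders in any smooth coordinate system. The proof is an induction on $|\alpha|$ by asymptotic analysis of geodesics entering $\overline M$ at a small grazing angle $\varepsilon$ to $\partial M$: strict convexity ensures that every such geodesic exits, and the $\varepsilon$-expansion of its exit point and exit direction admits coefficients which successively encode the jets of $g$ at $\partial M$, so the expansion can be inverted order by order. Applied with $L_g=L_{\widetilde g}$, this yields \eqref{eq:jet} for every multi-index $\alpha$. The main analytic obstacle is this last, order-by-order inversion; stages (i)--(ii) are essentially bookkeeping on top of the constructions in Sections 2 and 4.
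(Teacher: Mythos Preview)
Your strategy is essentially the paper's: identify the scattering relations and then invoke a known boundary-determination result. The paper's proof is shorter because it outsources more: it reads $L_g=L_{\widetilde g}$ directly off the sets $\overline\Sigma(p,\eta)$ via \eqref{eq:image_of_closed_geo}, then cites Proposition~2.1 of Stefanov--Uhlmann (\emph{Integral geometry of tensor fields on a class of non-simple Riemannian manifolds}) to recover the travel times $\tau_{exit}$ for near-tangential directions from $L_g$ together with the tangential boundary metric of Lemma~\ref{Le:Psi_pulls_back_the_boundary_metric}, and finally cites Theorem~2.1 of Lassas--Sharafutdinov--Uhlmann (\emph{Semiglobal boundary rigidity for Riemannian metrics}) to pass from the local \emph{lens} data $(L|_{\mathcal V},\tau_{exit}|_{\mathcal V})$ to the full jet \eqref{eq:jet}.

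Two comparative remarks. First, your stage~(i) is unnecessary in the paper's route: the zeroth-order equality of the full bilinear forms at $\partial M$ is part of the conclusion of the LSU theorem, so one need not establish it beforehand; the paper simply does not worry about the common domain for $L_g$ and $L_{\widetilde g}$ at the level of the statement. Second, your stage~(iii) slightly overstates what is cited: the standard boundary-determination theorem you describe uses the lens data (scattering \emph{and} short travel times), not the scattering relation alone, which is why the paper inserts the Stefanov--Uhlmann step to recover $\tau_{exit}$ near grazing. Your sketch of the grazing-geodesic expansion is exactly how that step works, so the content is right, but in writing it up you should make the passage from $L_g$ to the local lens data explicit rather than claiming the jet follows from $L_g$ directly.
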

\begin{proof}
Let $(p,\eta) \in  \p_- SM$. Suppose that $\eta$ is not tangential, since $M$ is non-trapping there exists a unique vector $(q,\xi) \in \p_- SM$ such that $(q,\xi)\neq (p,\eta)$ and  
$$
\overline\Sigma(p,\eta)=\overline\Sigma(q,\xi),
$$ 
where set $\overline\Sigma(p,\eta)$ is defined as in \eqref{eq:geodesic_segments1}. Therefore, the data \eqref{bigger geodesic measurment data} determines the \textit{scattering relation}
$$
L_g: \p_+ SM\to   \p_- SM, \: L(p,-\eta)=(\gamma_{p,-\eta}(\tau_{exit}(p,-\eta)),\dot \gamma_{p,-\eta}(\tau_{exit}(p,-\eta)))=(q,\xi).
$$ 
Thus by \eqref{eq:image_of_closed_geo} it holds that $L_{\widetilde g}=L_g=:L$, where $L_{\widetilde g}$ is the scattering relation of metric tensor $\widetilde g$.

It is shown in \cite{stefanov2008microlocal} Proposition 2.1 that the scattering relation $L_g$ with Lemma \ref{Le:Psi_pulls_back_the_boundary_metric} determine the first exit time function $\tau_{exit}$ for $\xi$ close to $S_p\p M$. More precisely, for every $p \in M$ there exists a neighborhood $\mathcal{V}_p \subset  \p_+ S_pM $ of $S_p \p M$ such that for all $(p,\xi) \in \mathcal{V}_p$
$$
\tau_{exit}(p,\xi)=\widetilde{\tau_{exit}}(p,\xi),
$$
where $\widetilde{\tau_{exit}}$ is the first exit time function of $\widetilde g$. Denote $\mathcal{V}:=\cup_{p \in \p M}\mathcal{V}_p$.

\medskip
We call the pair $(L|_{\mathcal V}, \tau_{exit}|_{\mathcal{V}})$ the \textit{local lens data}. In Theorem 2.1. of \cite{lassas2003semiglobal} it is shown that the local lens data implies \eqref{eq:jet}.
\end{proof}

Lemma \ref{Le:Psi_pulls_back_the_boundary_metric} and Proposition \ref{le:jet} imply that we can smoothly extend $\widetilde g$ onto $N$ such that $g=\tilde g$ in $N\setminus \overline M$. Next we will show that the geodesics of $g$ and $\widetilde{g}$ are the same.


\begin{Lemma}
\label{le:geodesic_equiv}
Metrics $g$ and $\widetilde g$ are geodesically equivalent, i.e., for every geodesic $\gamma:\R \to N$ of metric $g$, there exists a reparametrisation $\alpha:\R \to \R$ such that the curve $\gamma\circ \alpha$ is a geodesic of metric $\widetilde g$, and vice versa.
\end{Lemma}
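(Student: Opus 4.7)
The plan is to use the preserved complete scattering data to show that $g$- and $\widetilde g$-geodesics trace the same subsets of $N$; reparametrization then yields the geodesic equivalence. First I would verify that the complete scattering data for $g$ and $\widetilde g$ on $\overline M$ coincide. Since $\widetilde g=\Psi^{\ast}g_2$, the map $\Psi$ is a $\widetilde g$-to-$g_2$ isometry, and under the standing normalization $\phi=Id$ (in force since Proposition \ref{le:jet}) we have $D\Psi|_{\p M}=D\phi=Id$. Combining this with the data equivalence \eqref{eq:equivalenc_of_data} and Lemma \ref{Le:extension_from_tangential_data_to_full_data} gives
$$
R^{E,\widetilde g}_{\p M}(p)=R^{E,g_2}_{\p M_2}(\Psi(p))=R^{E,g}_{\p M}(p), \qquad p\in\overline M,
$$
so in particular $L_{\widetilde g}=L_g$, and the exit-time functions agree on a neighborhood of the tangential directions, as in the proof of Proposition \ref{le:jet}.

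Next I would show that boundary-to-boundary geodesic images inside $\overline M$ coincide. By Proposition \ref{le:jet}, $g$ and $\widetilde g$ agree as bilinear forms on $T_pN$ for $p\in\p M$, so $\p SM$ and $\p_-SM$ are the same sets for both metrics. For $(y,\xi)\in\p_-SM$, \eqref{eq:geodesic_segments1} characterizes the image of $\gamma^g_{y,-\xi}([0,\tau^g_{exit}(y,-\xi)])$ in $\overline M$ as the preimage under the homeomorphism $R^E_{\p M}$ of the set
$$
\overline\Sigma(y,\xi)=\{R^E_{\p M}(q): q\in\overline M,\ (y,\xi)\in R^E_{\p M}(q)\},
$$
which is determined purely by the complete scattering data. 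The analogous identity for $\widetilde g$, combined with Step 1, forces $\gamma^g_{y,-\xi}$ and $\gamma^{\widetilde g}_{y,-\xi}$ to have the same image in $\overline M$.

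To finish, I would extend this identification to arbitrary geodesics of $N$. In $N\setminus\overline M$ we have $g=\widetilde g$, so geodesics there agree literally. By non-trapping and strict convexity (Lemma \ref{LE:2_fundamental_form} and Corollary \ref{Co:interior_of_geo}), any maximal $g$-geodesic $\gamma:\R\to N$ decomposes into pieces in $N\setminus\overline M$ and boundary-to-boundary segments inside $\overline M$; each interior segment coincides as a subset of $N$ with the corresponding $\widetilde g$-geodesic segment by the previous paragraph. Proposition \ref{le:jet} supplies infinite-order agreement of $g$ and $\widetilde g$ at $\p M$, so the $\widetilde g$-geodesic with the same initial boundary jet as $\gamma$ matches $\gamma$ to infinite order at each boundary crossing and hence globally as an unparametrized curve. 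Reparametrizing by $\widetilde g$-arclength produces $\alpha:\R\to\R$ with $\gamma\circ\alpha$ a $\widetilde g$-geodesic; the reverse direction is symmetric.

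The main obstacle I foresee is the gluing at $\p M$ in the last step: one must produce a single reparametrization valid across all boundary crossings of $\gamma$. The infinite-order contact provided by Proposition \ref{le:jet}, together with uniqueness of geodesics from given jet data, should handle this, but the bookkeeping across successive crossings is the delicate point; the substantive content of the argument is really the image-identification in Step 2, after which only reparametrization of regular curves in $N$ remains.
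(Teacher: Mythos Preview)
Your proposal is correct and follows essentially the same route as the paper: both arguments identify the image of any boundary-to-boundary geodesic in $\overline M$ with the data-determined set $\overline\Sigma(y,\xi)$ (the paper invokes \eqref{eq:image_of_closed_geo} directly, while you rederive the equality $R^{E,\widetilde g}_{\p M}=R^{E,g}_{\p M}$ first), and then pass to reparametrization. Your Steps~1 and~3 spell out the data identification and the gluing across boundary crossings more carefully than the paper, which is terse and defers these points to a reference; your caution about the bookkeeping at successive crossings is warranted but does not indicate a gap in the strategy.
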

\begin{proof}
Since $M$ is non-trapping, any geodesic arc $\gamma$ of metric $g$ that is contained in $\overline M$ can be parametrized with a set $\overline \Sigma(p,\eta)$ for some $(p,\eta)\in \p_-SM =\p_- \widetilde{ S}M$. Thus by \eqref{eq:image_of_closed_geo} any geodesic $\gamma$ of $g$ can be re-parametrized to be a geodesic $\widetilde \gamma$ of metric $\widetilde g$ and vice versa. This means that there exists a smooth one-to-one and onto function  $\alpha:[a',b']\to [a,b]$ such that
$$
\dot{\alpha}>0, \: \widetilde \gamma(t)=(\gamma \circ \alpha)(t).
$$ 
See Subsection 2.0.5 of \cite{LaSa} for more details.
\end{proof}

Now we are ready to prove the main result of this section. The proof is similar to the one given in \cite{LaSa} which works for more general settings. The key ingredient of the proof is the Theorem 1 of \cite{topalov2003geodesic}.

\begin{Proposition}
The metrics $g$ and $\widetilde g$ coincide in $M$.
\label{Pr:metrics_coincide}
\end{Proposition}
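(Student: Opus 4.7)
The plan is to upgrade the two facts already at our disposal—namely the geodesic equivalence provided by Lemma \ref{le:geodesic_equiv} and the coincidence of all jets of $g$ and $\widetilde g$ on $\p M$ given by Lemma \ref{Le:Psi_pulls_back_the_boundary_metric} and Proposition \ref{le:jet}—to a global equality $g \equiv \widetilde g$ on $N$, which in particular gives $g = \widetilde g$ on $M$. As already noted in the paragraph preceding Lemma \ref{le:geodesic_equiv}, the infinite-order boundary agreement allows us to smoothly extend $\widetilde g$ across $\p M$ so that $\widetilde g \equiv g$ on $N\setminus\overline M$; after this extension, Lemma \ref{le:geodesic_equiv}, combined with equality outside $\overline M$, gives geodesic equivalence on the entire connected closed manifold $N$.

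The central tool is Theorem 1 of \cite{topalov2003geodesic}, which associates to any pair of geodesically equivalent metrics the Sinyukov $(1,1)$-tensor
\[
L^i_j := \left(\frac{\det \widetilde g}{\det g}\right)^{1/(n+1)} \widetilde g^{ik} g_{kj},
\]
and produces from it a family of first integrals of the $g$-geodesic flow on $TN$ whose values at $(x,\xi)$ encode the elementary symmetric functions $\sigma_1(L),\ldots,\sigma_n(L)$ of the eigenvalues of $L_x$. On $N \setminus \overline M$ we have $L \equiv \mathrm{Id}$, so each $\sigma_k(L)$ there takes its identity value $\binom{n}{k}$. Since $\overline M$ is non-trapping by \eqref{eq_exit time is bounded} and has strictly convex boundary, every $g$-geodesic through any point of $N$ either lies entirely in $N\setminus\overline M$ or enters that region in finite time; constancy of the Topalov integrals along such geodesics, exploited exactly as in the analogous step of \cite{LaSa}, forces $\sigma_k(L)(p) = \binom{n}{k}$ for every $p \in N$ and every $k$. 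Hence the characteristic polynomial of $L$ equals $(\lambda-1)^n$ pointwise on $N$.

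To conclude I would exploit that $L$ is $g$-self-adjoint: indeed $g^{-1}\widetilde g$ is $g$-self-adjoint (both $g$ and $\widetilde g$ being symmetric), and $L$ is a positive scalar multiple of its inverse. The fiberwise spectral theorem then gives $L \equiv \mathrm{Id}$ on $N$. Writing $c := (\det \widetilde g/\det g)^{1/(n+1)}$, the identity $L = \mathrm{Id}$ translates to $\widetilde g = c g$; taking determinants yields $\det \widetilde g = c^n \det g$, so $c = c^{n/(n+1)}$, forcing $c = 1$. Consequently $\widetilde g = g$ on $N$, and in particular $\widetilde g = g$ on $M$, as required. The principal obstacle is the geodesic-transport step of the second paragraph—the careful application of \cite{topalov2003geodesic} to propagate the identity values of $\sigma_k(L)$ from $N\setminus \overline M$ into $M$; once that step is executed, the remaining conclusion $L = \mathrm{Id} \Rightarrow \widetilde g = g$ is a short linear-algebra calculation.
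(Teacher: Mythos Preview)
Your proposal is correct and follows the same strategy as the paper: invoke the Matveev--Topalov integral(s) for the geodesically equivalent pair $(g,\widetilde g)$, use the non-trapping condition to propagate their constant values from $N\setminus\overline M$ (where $g=\widetilde g$) into $M$, and finish with a determinant count. The paper is slightly more economical in that it uses only the single integral $I_0(p,v)=(\det g/\det\widetilde g)^{2/(n+1)}\,\widetilde g(v,v)$, whose constancy along $g$-geodesics already yields the conformality $g=c(p)\,\widetilde g$ directly, bypassing your spectral argument for $L$; either route closes the proof.
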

\begin{proof}
Define a smooth mapping $I_0:TN \rightarrow \R$ as 
\begin{equation}
I_0(p,v)=\bigg(\frac{\det (g)|_{p} }{\det(\widetilde{g})|_{p}}\bigg)^{\frac{2}{n+1}} \widetilde{g}(v,v), \label{eq:MaTo_formula}
\end{equation}
note that the function 
$$
f(p):=\frac{\det (g)|_{p} }{\det(\widetilde{g})|_{p}} 
$$ 
is coordinate invariant, since in any smooth local coordinates $(U,(x^j)_{j=1}^n)$ the function $f$ coincides with the function $p \mapsto\det (\widetilde g^{ik}(p)g_{jk}(p))$ in $U$ where $(\widetilde g^{ij})_{i,j=1}^{n}$ is the inverse matrix of $(\widetilde g_{ij})_{i,j=1}^{n}$ and the $(1,1)$-tensor field
$$
\widetilde g^{ik}g_{jk}\frac{\p}{\p x^i}\otimes dx^j
$$ 
can be considered as a linear automorphism on $TU$.

Let $\gamma_{g}$ be a geodesic of metric $g$. Define a smooth path $\beta$ in $TN$ as $\beta(t)=(\gamma_{g}(t), \dot{\gamma}_{g}(t))$. Then $\beta$ is an integral curve of the geodesic flow of metric $g$.
Theorem 1 of \cite{topalov2003geodesic} states that, if $g$ and $\widetilde{g}$ are geodesically equivalent, 
then function $t\mapsto I_0(\beta(t))$ is a constant for all geodesics $\gamma_g$ of metric $g$.

Since $g$ and $\widetilde{g}$ coincide in the set $N \setminus M$, we have that for any  point $z \in N \setminus M$ and vector $v \in T_zN$ 
\begin{equation*}
I_0(z,v)=\widetilde{g}_z(v,v)=g_z(v,v). 
\end{equation*}
Let $p \in M$. Choose $\gamma(t):=\gamma_{z, \xi}(t),\: \xi \in S_zN,$ $z\in N \setminus M$  be a $g$-geodesic such that $p=\gamma(t_p)$ for some $t_p> 0$. Then $I_0(z,\xi)=1$ 
and  by Theorem 1 of \cite{topalov2003geodesic} $I_0$ is constant along the geodesic flow $(\gamma(t),\dot\gamma(t))$, i.e.,

\begin{equation}
I_0(p,\dot{\gamma}(t_p))=I_0(z,\xi)=1. \label{I_0=1 on W}
\end{equation}

Since $M$ is non-trapping, \eqref{I_0=1 on W} implies that, in local coordinates, for any $\xi \in S_{p}N$
\begin{equation}
g_{ij}(p)\xi^i\xi^j=1=I_0(p,\xi)=\bigg(f(p)\bigg)^{\frac{2}{n+1}} \widetilde{g}_{ij}(p)\xi^i\xi^j. \label{metrics are conformal}
\end{equation}
Then the equation \eqref{metrics are conformal} holds for all $\xi \in T_{p}N$ and both sides of equation \eqref{metrics are conformal} are smooth in $\xi$, we obtain 
\begin{equation}\label{local metrics are conformal eq}
g_{ij}(p)=\bigg(f(p)\bigg)^{\frac{2}{n+1}} \widetilde{g}_{ij}(p), \textrm{ for all } i,j\in \{1,\ldots,n\}.
\end{equation} 
Moreover
\begin{equation*}
(f(p))^{\frac{2n}{n+1}-1}=1,
\end{equation*}
which implies that $f(p)=1$. By \eqref{local metrics are conformal eq} we conclude that $g= \widetilde{g}$ at $p$. Since $p \in M$ was arbitrary the claim follows. 
\end{proof}

\begin{proof}[Proof of \ref{th:main}.]
The proof of Theorem \ref{th:main} follows from Theorems \ref{th:topology}, \ref{th:Psi_is_diffeo} and Proposition \ref{Pr:metrics_coincide}.
\end{proof}

\medskip
We will give a Riemannian metric $G$ to smooth manifold $R^{E}_{\p M}(\overline{M})$ as a pullback of $g$, that is $ G:= ((R^{E}_{\p M})^{-1})^{\ast}g$. A priori we don't know $g$, thus  we don't  know $G$. However as the smooth structure is known we can recover the collection $\Met(R^{E}_{\p M}(\overline{M}))$, that is the collection of all Riemannian metrics of the smooth manifold $R^{E}_{\p M}(\overline{M})$. 

Since $(\overline{M},g)$ is non-trapping we can use the sets $\overline \Sigma(p,\eta), \: (p,\eta) \in \p_-SM$ to recover the images of the geodesics of $G$. 
Thus by the proof of Proposition \ref{Pr:metrics_coincide} the following set contains precisely one element  that is $(G,G)$,
$$
\begin{array}{ll}
\{(h,h') \in \Met(R^{E}_{\p M}(\overline{M}))^2:& \partial^\alpha h_{ij} |_{R^{E}_{\p M}(\p M)}= \partial^\alpha h'_{ij} |_{R^{E}_{\p M}(\p M)} \hbox{ for every } \alpha \in \N^n,
\\
& \hbox{metrics  $h$ and $h'$ are geodesically equivalent,}
\\
& \hbox{sets $\overline \Sigma(p,\eta), \: (p,\eta) \in \p_-SM$ are the images} 
\\
& \hbox{of their geodesics}\}.
\end{array}
$$ 
We conclude that we have proven the following Proposition.

\begin{Proposition}
\label{Pr:reconstruction_of_G}
The data \eqref{bigger geodesic measurment data} determine the metric tensor $ G:= ((R^{E}_{\p M})^{-1})^{\ast}g$.
\end{Proposition}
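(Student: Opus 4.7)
The plan is to work intrinsically on the smooth manifold $R^E_{\p M}(\overline{M})$, whose differentiable structure was recovered in Proposition \ref{Pr:reconstruction_of_smooth_struct}. Consequently the collection $\Met(R^E_{\p M}(\overline{M}))$ of all smooth Riemannian metrics on this manifold is an accessible object, and the target $G=((R^E_{\p M})^{-1})^\ast g$ is one of its elements. I would characterize $G$ by intrinsic conditions extracted from the known data and then show that these conditions single it out.

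First, I would harvest three ingredients on $R^E_{\p M}(\overline{M})$ from the data. Transferring via the diffeomorphism $R^E_{\p M}$: (i) by Lemma \ref{Le:complete:scattering_data} the boundary restriction $G|_{R^E_{\p M}(\p M)}$ is determined; (ii) by (the argument of) Proposition \ref{le:jet} the full Taylor jet of $G$ at that boundary is determined; (iii) by the non-trapping assumption together with \eqref{eq:geodesic_segments1}, the sets $\overline{\Sigma}(p,\eta)$ for $(p,\eta)\in \p_-SM$ are visible in the data and are precisely the unparametrized images of the maximal $G$-geodesics meeting the interior.

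Next, consider the candidate set
\begin{equation*}
\mathcal{S}:=\{(h,h')\in \Met(R^E_{\p M}(\overline{M}))^2 : h \hbox{ and } h' \hbox{ match to all orders on } R^E_{\p M}(\p M), \hbox{ are geodesically equivalent, and both admit } \{\overline{\Sigma}(p,\eta)\}_{(p,\eta)\in\p_-SM} \hbox{ as the family of unparametrized images of their geodesics}\}.
\end{equation*}
This set is manifestly constructible from the data alone, and $(G,G)\in \mathcal{S}$. To prove $\mathcal{S}=\{(G,G)\}$, I would repeat the argument of Proposition \ref{Pr:metrics_coincide}: given any $(h,h')\in \mathcal{S}$, use the matching boundary jets to extend both metrics smoothly past $\p M$ into the ambient $N$ so that they coincide with a common extension of $G$ outside $\overline{M}$; then invoke Theorem 1 of \cite{topalov2003geodesic} to conclude that the Matveev--Topalov invariant \eqref{eq:MaTo_formula} for the geodesically equivalent pair $(h,h')$ is constant along geodesics; evaluating this constant at any point of the exterior, where $h=h'$, yields the value $1$ throughout, which forces $h=h'$ pointwise on $R^E_{\p M}(\overline{M})$. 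Applying the same argument to the geodesically equivalent pair $(G,h)$ then gives $h=G$, so $\mathcal{S}=\{(G,G)\}$ and $G$ is determined by the data.

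The main obstacle is the extension step: to apply the Topalov--Matveev identity we must compare each candidate metric with one that agrees with it in an open neighborhood where we can initialize geodesics. The infinite-order jet agreement at $\p M$ provides such a smooth extension across the boundary, but one must verify that this extension can be taken compatible with a common extension of $G$ to $N$, so that the constancy of $I_0$ can be evaluated in the known exterior region $N\setminus\overline{M}$. Once this technicality is handled, the proof of Proposition \ref{Pr:metrics_coincide} transfers essentially verbatim, concluding the argument.
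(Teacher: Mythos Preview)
Your proposal is correct and follows essentially the same approach as the paper: both define a candidate set of metric pairs on $R^E_{\p M}(\overline{M})$ constrained by matching boundary jets, geodesic equivalence, and having the sets $\overline{\Sigma}(p,\eta)$ as unparametrized geodesic images, and then invoke the Matveev--Topalov argument of Proposition \ref{Pr:metrics_coincide} to conclude this set is the singleton $\{(G,G)\}$. Your identification of the extension/compatibility step as the key technical point is apt, and your treatment is if anything more explicit than the paper's own; just make sure your condition ``match to all orders'' is read as matching the \emph{known} jet from ingredient (ii), since that is what lets you run the argument for the pair $(G,h)$.
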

%

Finally we note that the proof of Theorem \ref{th:reconstruction} follows from the Propositions \ref{pr:R_F is homeo}, \ref{Pr:reconstruction_of_smooth_struct} and \ref{Pr:reconstruction_of_G}.
\color{black}
\bigskip
 
\noindent
\textbf{Acknowledgements.} The research of ML, and TS was partly supported by  the Finnish Centre of Excellence in Inverse Problems Research
and Academy of Finland. In particular, ML was supported by projects 284715 and 303754, TS by projects 273979 and 263235. TS would like to express his gratitude to the Department of Pure Mathematics and Mathematical Statistics of the University of Cambridge for hosting him during two research visits when part of this work was carried out. TS would like to  also thank Prof. Gunther Uhlmann for hosting him in University of Washington where part of this work was carried out. HZ is supported by EPSRC grant 
EP/M023842/1. He would like to thank Prof. Gunther Uhlmann and the 
Department of Mathematics and Statistics of the University of Helsinki 
for the generous support of his visit to Helsinki during his PhD when 
part of the work was carried out.

\bibliographystyle{abbrv}
\bibliography{bibliography}

\end{document}